\documentclass[reqno]{amsart}
\usepackage{amsmath,enumerate,etoolbox,booktabs,float}
\usepackage{txfonts}
\renewcommand{\chi}{\chiup}
\RequirePackage[scaled=0.92]{helvet}%
%
\usepackage{ifthen}
\usepackage{tikz}
\usetikzlibrary{matrix}
\usetikzlibrary{patterns}
\usetikzlibrary{positioning}
\RequirePackage[paperwidth=155mm,paperheight=235mm,tmargin=20mm,lmargin=19mm,rmargin=19mm,bmargin=16mm,asymmetric]{geometry}%
\newtheorem{theorem}{Theorem}
\newtheorem*{theorem*}{Theorem}
\newtheorem{proposition}{Proposition}
\newtheorem*{proposition*}{Proposition}

\newtheorem*{lemma*}{Lemma}

\newtheorem{corollary}{Corollary}
\newtheorem*{corollary*}{Corollary}
\theoremstyle{definition}

\newtheorem*{question*}{Question}

\newtheorem{remark}{Remark}
\newtheorem*{remark*}{Remark}

\newtheorem*{example*}{Example}
\newtheorem{conjecture}{Conjecture}
\patchcmd{\section}{\scshape}{\bfseries}{}{}
\makeatletter
\renewcommand{\@secnumfont}{\bfseries}
\renewcommand\@biblabel[1]{#1.}
\makeatother
\newcommand{\qbinom}[2]{\genfrac{[}{]}{0pt}{}{#1}{#2}}
\newcommand{\GM}{M}

\newcommand{\NN}{{\mathbf N}}
\newcommand{\ZZ}{{\mathbf Z}}
\newcommand{\height}{{\rm ht}}
\newcommand{\GL}{{\rm GL}}
\newcommand{\SL}{{\rm SL}}
\renewcommand{\L}{\mathcal L}
\newcommand{\K}{R}
\newcommand{\diag}{{\rm diag}}
\newcommand{\wt}{{\rm wt}}
\newcommand{\nesting}{{\rm nest}}
\newcommand{\crossing}{{\rm crossing}}
\newcommand{\permcr}{{\rm cr}}
\newcommand{\wex}{{\rm wex}}
\newcommand{\block}{{\rm block}}
\frenchspacing
\title{Orthogonal polynomials and Smith normal form}
\author{Alexander~R.~Miller}
\address{Centre \'Emile Borel, Institut Henri Poincar\'e, Paris, France}
\thanks{A.~R.~Miller was supported in part by 
the Fondation Sciences Math\'ematiques de Paris.}
\author{Dennis~Stanton}
\address{School of Mathematics, University of Minnesota, Minneapolis, MN 55455, 
United States}
\email{stanton@math.umn.edu}
\date{March 31, 2017}
\begin{document}
\begin{abstract}
Smith normal form evaluations found by Bessenrodt and Stanley 
for some Hankel matrices of $q$-Catalan numbers are proven in two ways.
One argument generalizes the Bessenrodt--Stanley results for   
the Smith normal form of a certain multivariate matrix that refines one studied by 
Berlekamp, Carlitz, Roselle, and Scoville. 
The second argument, which uses orthogonal polynomials,   
generalizes to a number of other Hankel matrices, 
Toeplitz matrices, and Gram matrices. It gives new results for 
$q$-Catalan numbers, $q$-Motzkin numbers, $q$-Schr\"oder numbers, $q$-Stirling numbers, $q$-matching numbers, 
$q$-factorials, $q$-double factorials, as well as generating functions 
for permutations with eight statistics. 
\end{abstract}
\maketitle
\parindent1em
\vspace{-.2in}
\section{Introduction}\label{Intro}
\noindent 
In \cite{BS} Bessenrodt and Stanley gave a Smith normal form evaluation
for a certain matrix that generalizes one studied by 
Berlekamp \cite{Berlekamp1,Berlekamp2}, and Carlitz, Roselle, and Scoville \cite{CRS}. 
They specialized this result to give 
a Smith normal form result on Hankel matrices of $q$-Catalan numbers.
These evaluations use induction and elementary row and column operations. 
In \S\ref{BS:Remark} we give a short direct combinatorial argument which  
generalizes the results in~\cite{BS}. 
But the main purpose of the present paper is to put 
the Hankel results into the combinatorial framework of orthogonal polynomials. 
This combinatorial theory developed 
over the last 30 years 
immediately implies Bessenrodt and Stanley's two Hankel evaluations 
as well as many new ones, see \S\ref{Section:Examples}.

The main new results in this paper are 
\begin{enumerate}[(1)]
\item Theorem~\ref{Main:Theorem} 
for the Smith normal form of Hankel matrices of moments of 
orthogonal polynomials,
\item Theorem~\ref{Toeplitz:SNF} 
for the Smith normal form of Toeplitz matrices of moments of 
biorthogonal polynomials,
\item Theorem~\ref{Cor:Char} 
for the Smith normal form of a rank matrix of a lattice.
\end{enumerate}

\section{Definitions}
\noindent
Let $A$ be an $m$-by-$n$ matrix with entries in a commutative ring $R$. 
\subsection{}
We say that $A$ has {\it Smith normal form} (or {\it SNF} for short) $D$ over $R$ if
\begin{enumerate}[(a)]
\item $PAQ=D$ for some $P\in\GL(m,\K)$ and $Q\in\GL(n,\K)$,\label{SNF:D}
\item $D$ is a diagonal ($m\times n$) matrix in the sense that $D_{ij}=0$ for $i\neq j$,\label{SNF:diag}
\item $d_{ii}$ is a multiple of $d_{jj}$ whenever $i\geq j$. \label{SNF:d}
\end{enumerate} 
Most of the time $R$ will be a unique factorization domain 
such as $\ZZ[q]$ so that the SNF of $A$ is unique up 
to units if it exists~\cite[Prop.~8.1]{MR}.
Existence is guaranteed for   
$R=\ZZ$ or any other principal ideal domain,  
but not for other types of unique factorization domains.  
For example if $R=\ZZ[q]$, then a diagonal matrix of the form 
$\diag(q+a_1,q+a_2,\ldots, q+a_n)$ 
($a_i\in \ZZ$) admits a Smith normal form if and only if the $a$'s 
are chosen from a set of two consecutive integers~\cite[Prop.~8.9]{MR}.

\subsection{}
If $A$ is square-shaped with Smith normal form $D$,  
then $\det A$ equals $d_1d_2\ldots d_n$ up to a unit factor in $R$. 
For example if $R=\ZZ[q]$, then $\det A=\pm d_1d_2\ldots d_n$. 
Call $D$ a {\it special Smith normal form} ({\it SSNF}) of $A$ over $\K$ 
if in addition to \eqref{SNF:D}--\eqref{SNF:d} it holds that 
\begin{enumerate}[(a$'$)]
\item $PAQ=D$ for some $P\in\SL(m,\K)$ and $Q\in\SL(n,\K)$.\label{SNF:D:p}
\end{enumerate}
\begin{proposition}
$A$ has SSNF over $\K$ $\Leftrightarrow$
$A$ has SNF over $\K$. If $A$ has $n\times n$ SSNF $D$,~then 
\begin{equation}
\det A=\det D=d_1d_2\ldots d_n.
\end{equation}
\end{proposition}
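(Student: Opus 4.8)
The plan is to treat the two assertions separately. The forward implication—if $A$ has an SSNF then it has an SNF—needs nothing at all, since condition (a$'$) is a strengthening of (a) while the remaining conditions (b), (c) are literally the same; I would dispose of it in a sentence. All the work is in the converse and in the determinant identity, and the idea for both is the same: in any SNF the transforming matrices have \emph{unit} determinant, and a unit scalar can be pushed harmlessly into a single diagonal entry of $D$.

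For the converse I would begin with $PAQ=D$, $P\in\GL(m,\K)$, $Q\in\GL(n,\K)$, and note that $u=\det P$ and $v=\det Q$ are units of $\K$ (because $P,Q$ are invertible over $\K$, so $\det P\cdot\det P^{-1}=1$). Assuming $m,n\ge 1$, the degenerate cases being vacuous, set $E=\diag(u^{-1},1,\dots,1)$ and $F=\diag(v^{-1},1,\dots,1)$; then $P'=EP$ and $Q'=QF$ lie in $\SL(m,\K)$ and $\SL(n,\K)$ respectively, and $P'AQ'=EDF=:D'$. The next step is to verify that $D'$ is again a valid Smith normal form of $A$: it is still diagonal, its entries are $(uv)^{-1}d_1$ and $d_2,d_3,\dots$, and since $(uv)^{-1}$ is a unit, the divisibility chain (c) is unaffected (an element is a multiple of $d_1$ iff it is a multiple of the associate $(uv)^{-1}d_1$, and the other comparisons are untouched). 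Then $D'$ is, by construction, an SSNF of $A$. I would also record that $D'$ is an entrywise associate of $D$, so when $\K$ is a UFD (where the SNF is unique up to units) one may simply say that $D$ itself is an SSNF.

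For the determinant identity I would take an $n\times n$ SSNF $PAQ=D$ with $P,Q\in\SL(n,\K)$ and read off $\det A=\det P\cdot\det A\cdot\det Q=\det D=d_1d_2\cdots d_n$, the last equality because $D$ is diagonal. I do not expect a genuine obstacle anywhere—the proposition is essentially bookkeeping. The one point to handle with care is that one cannot in general absorb $\det P$ alone without altering $D$: both sides must be rescaled, so the diagonal matrix produced is a priori only an associate of the original $D$. Hence the statement actually proved is that $A$ possesses \emph{some} SSNF, not that an arbitrarily chosen SNF is automatically special.
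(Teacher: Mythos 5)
Your proposal is correct and follows essentially the same route as the paper: the paper likewise absorbs the unit $\det P^{-1}\det Q^{-1}$ into the first diagonal entry of $D$ to pass from an SNF to an SSNF, with the forward implication and the determinant identity noted as immediate. Your version merely spells out the bookkeeping (splitting the scaling into the two factors $E$ and $F$ and checking that the divisibility chain survives multiplication by a unit) that the paper leaves implicit.
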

\begin{proof}
If $A$ has Smith normal form $D$ and 
$P,Q$ satisfy \eqref{SNF:D}, then 
scaling the first row of $D$ by $\det P^{-1}\det Q^{-1}$ 
gives SSNF $D'$ of $A$. The other implications are clear.
\end{proof}

\subsection{} 
A number of well-studied determinant evaluations in combinatorics 
can be sharpened into interesting Smith normal 
form evaluations over the rings $\ZZ[q]$ and $\ZZ[x_1,x_2,\ldots,x_n]$. 
But there seems to be no generic explanation why certain matrices 
admit a Smith normal form. Each one uses a different trick. 
Bessenrodt and Stanley gave two recent examples (Corollary~\ref{Catalan:SNF} below). They refine 
\[\det\, (C_{i+j})_{0\leq i,j\leq n} =1\quad \text{and}\quad 
\det\, (C_{i+j+1})_{0\leq i,j\leq n} =1\]
by first replacing the Catalan numbers $C_n=\frac{1}{n+1}\binom{2n}{n}$ 
with the $q$-Catalan numbers $C_n(q)$ below in \eqref{q:Cat:numbers}, 
and then giving Smith normal form evaluations over $\ZZ[q]$ for  
the Hankel matrices $(C_{i+j}(q))$ and $(C_{i+j+1}(q))$. 
The determinants of these two $q$-Hankel matrices are not new. They are 
well known in the combinatorial study of orthogonal polynomials 
and Theorem~\ref{Main:Theorem} tells us that 
Bessenrodt and Stanley's Smith normal form evaluations are 
completely elucidated by the combinatorics of orthogonal polynomials as well.

\section{SNF of Hankel matrices of moments of 
orthogonal polynomials}\label{SNF:Hankel:Section}
\noindent
Take two sequences $b=(b_0,b_1,\ldots)$ and $\lambda=(\lambda_1,\lambda_2,\ldots)$ in 
the commutative ring $\K$. Define $p_0(x),p_1(x),\ldots $ 
in $R[x]$ by 
the classical three-term recurrence relation
\begin{equation}\label{three-term}
p_{n+1}(x)=(x-b_n)p_n(x)-\lambda_np_{n-1}(x),\quad \text{$p_{-1}(x)=0, p_0(x)=1$}.
\end{equation}
The $p_n$'s are orthogonal
in that $\mathcal L(p_n(x)p_m(x))=0$ 
whenever $n\neq m$ for some unique linear functional 
$\mathcal L:\K[x]\to\K$ with $\L(1)=1$.
The moments $\mathcal L(x^n)$ 
are called {\em the moments of $\{p_n(x)\}_{n\geq 0}$} and they are described by Motzkin paths.

\subsection{}
A {\em Motzkin path of length $n$} is a map  
$\omega:\{1,2,\ldots,n+1\}\to\NN$ 
such that
$|\omega'|\leq 1$ 
for $\omega':\{1,2,\ldots,n\}\to\ZZ$ 
defined by $\omega'(i)=\omega(i+1)-\omega(i)$.  
Put 
\begin{equation}\label{weight:formula}
{\rm wt}(\omega)=\prod b_{\omega(i)} \lambda_{\omega(j)}
\end{equation}
over $i$ and $j$ such that $\omega'(i)=0$ and  ${\omega'(j)=-1}$. 
Denote by $\L:\K[x]\to \K$ the linear functional 
whose $n$-th moment $\L(x^n)$ is the weighted generating function
\begin{equation}\label{general:moments}
\mu_n=\L(x^n)=\sum_\omega {\rm wt}(\omega)
\end{equation}
$(n=0,1,\ldots)$ over all Motzkin paths $\omega$ of length $n$ such that ${\omega(1)=\omega(n+1)=0}$. 
Then a sign-reversing involution \cite{V} 
tells us that 
\begin{equation}\label{orthogonal:formula}
\L(p_i(x)p_j(x))=\lambda_1\lambda_2\ldots\lambda_i\delta_{ij}.
\end{equation}
The moments $\mu_n$ of $\L$ are therefore the moments of $\{p_n(x)\}_{n\geq 0}$.

\subsection{} 
Our main theorem is the observation   
that the Hankel matrix $H=(\mu_{i+j})_{0\leq i,j\leq n}$ has 
Smith normal form over 
$\ZZ[b,\lambda]=\ZZ[b_0,b_1,\ldots,\lambda_1,\lambda_2,\ldots]$.

\begin{theorem}\label{Main:Theorem}
$(\mu_{i+j})_{0\leq i,j\leq n}$ has SSNF 
$\diag(1,\lambda_1,\lambda_1\lambda_2,\ldots,\lambda_1\lambda_2\ldots\lambda_n)$ 
over $\ZZ[b,\lambda]$.
\end{theorem}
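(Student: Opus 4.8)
The plan is to exploit the $LU$/Cholesky-type factorization of $H=(\mu_{i+j})_{0\le i,j\le n}$ that orthogonality provides, and to observe that it is already defined over $\ZZ[b,\lambda]$ and unimodular. Concretely, write $p_i(x)=\sum_{k=0}^{i}c_{ik}x^{k}$ and set $C=(c_{ik})_{0\le i,k\le n}$. An immediate induction on \eqref{three-term} shows that each $c_{ik}$ is a polynomial in $b_0,b_1,\dots,\lambda_1,\lambda_2,\dots$ with integer coefficients, that $c_{ii}=1$, and that $c_{ik}=0$ for $k>i$; indeed passing from $p_{n-1},p_n$ to $p_{n+1}$ only multiplies by $x-b_n$ and subtracts $\lambda_n p_{n-1}$. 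Hence $C$ is lower unitriangular with $\det C=1$, so both $C$ and $C^{T}$ lie in $\SL(n+1,\ZZ[b,\lambda])$.

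The second step is a one-line computation: by linearity of $\L$,
\[
(CHC^{T})_{ij}=\sum_{k,l}c_{ik}\,\mu_{k+l}\,c_{jl}
=\L\!\Big(\sum_{k}c_{ik}x^{k}\cdot\sum_{l}c_{jl}x^{l}\Big)=\L\big(p_i(x)p_j(x)\big),
\]
and then \eqref{orthogonal:formula} gives $\L(p_i(x)p_j(x))=\lambda_1\lambda_2\cdots\lambda_i\,\delta_{ij}$, so
\[
CHC^{T}=\diag(1,\lambda_1,\lambda_1\lambda_2,\dots,\lambda_1\lambda_2\cdots\lambda_n)=:D .
\]
It remains only to check that $D$ is a legitimate special Smith normal form: it is diagonal, and for $i\ge j$ the quotient $d_{ii}/d_{jj}=\lambda_{j+1}\cdots\lambda_i$ lies in $\ZZ[b,\lambda]$, giving \eqref{SNF:d}, while $C,C^{T}\in\SL$ gives \eqref{SNF:D:p}. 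Together with the preceding Proposition this also recovers the classical Hankel determinant $\det H=\lambda_1^{\,n}\lambda_2^{\,n-1}\cdots\lambda_n$.

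I do not expect a real obstacle here; the whole point of the theorem is that the transition matrix $C$ between the bases $\{x^{k}\}$ and $\{p_k(x)\}$ is unitriangular over $\ZZ[b,\lambda]$, so the factorization $H=C^{-1}D(C^{-1})^{T}$ coming from orthogonality is automatically a Smith normal form over that ring. The only spot demanding the least bit of care is confirming that $C$ has entries in $\ZZ[b,\lambda]$ and not merely in some localization or extension — and that, as noted, is forced by the shape of the recurrence \eqref{three-term}.
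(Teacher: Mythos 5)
Your proof is correct and is essentially identical to the paper's: both conjugate the Hankel matrix $H$ by the lower unitriangular matrix of coefficients of the $p_i(x)$ (your $C$, the paper's $P$) and invoke \eqref{orthogonal:formula} to obtain $CHC^{T}=\diag(1,\lambda_1,\ldots,\lambda_1\cdots\lambda_n)$ over $\ZZ[b,\lambda]$. The only difference is that you spell out the intermediate computation and the divisibility check, which the paper leaves implicit.
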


\begin{proof}
Write $P_{ik}$ for the coefficient of $x^k$ in $p_i(x)$.
Let $P=(P_{ik})_{0\leq i,k\leq n}$.  
Then by~\eqref{orthogonal:formula} 
\begin{equation}\label{diag}
PHP^{t}=\diag(1,\lambda_1,\lambda_1\lambda_2,\ldots,\lambda_1\lambda_2\ldots\lambda_n),\quad H=(\mu_{i+j})_{0\leq i,j\leq n}.
\end{equation}
Since $p_m(x)$ is a polynomial over 
$\ZZ[b,\lambda]$ which is monic of degree $m$, 
$P$ is a matrix over $\ZZ[b,\lambda]$ which is lower triangular with 1's on the diagonal. In other words $P$ is a lower unitriangular matrix over $\ZZ[b,\lambda]$.  
\end{proof}

\subsubsection{}\label{Catalan:no:q} 
For example if $b_n=0$ and $\lambda_n=1$, then by \eqref{general:moments} the $n$-th moment $\mu_n$ equals 
the number of length-$n$ Dyck paths (Motzkin paths where $|\omega'|=1$). 
Hence 
\begin{equation}
\mu_n=\begin{cases} C_{n/2} & \text{if $n$ is even,}\\
0 & \text{if $n$ is odd,}\end{cases}
\end{equation}
where $C_n$ is the $n$-th Catalan number given by $C_{n+1}=\sum_{k=0}^n C_kC_{n-k}$, $C_0=1$. 
In this case  
$\mu_n=\binom{n}{\lfloor n/2\rfloor}-\binom{n}{\lfloor{(n-1)/2}\rfloor}$ 
and Theorem~\ref{Main:Theorem} says that the Hankel matrix 
$(\mu_{i+j})_{0\leq i,j\leq n}$ has 
special Smith normal form $\diag(1,1,\ldots,1)$ over $\ZZ$ so that 
${\det\, (\mu_{i+j})_{0\leq i,j\leq n}=1}$.

\subsubsection{}
We know of only two previous results about the Smith normal form of 
a Hankel matrix of $q$-Catalan numbers over a polynomial ring. They  
are the two mentioned above that 
Bessenrodt--Stanley found~\cite[pp.~81--82]{BS} for the $q$-Catalan numbers
\begin{equation}\label{q:Cat:numbers}
C_{n+1}(q)=\sum_{k=0}^n q^k C_k(q) C_{n-k}(q),\quad C_0(q)=1.
\end{equation}
We record them here in parts \eqref{Catalan:A} and \eqref{Catalan:B} of Corollary~\ref{Catalan:SNF}. 
They are elucidated in \S\ref{q:Catalan:BS:Section}
by Theorem~\ref{Main:Theorem} applied to 
the natural $q$-analogue of our first example from \S\ref{Catalan:no:q}.
\begin{corollary}\label{Catalan:SNF}
\!\begin{enumerate}[\rm(a)]
\item The matrix $(C_{i+j}(q))_{0\leq i,j\leq n}$ has SSNF 
$\diag(q^{\binom{0}{2}},q^{\binom{2}{2}},q^{\binom{4}{2}},\ldots,q^{\binom{2n}{2}})$ over $\ZZ[q]$.\label{Catalan:A}
\item The matrix $(C_{i+j+1}(q))_{0\leq i,j\leq n}$ has SSNF 
$\diag(q^{\binom{1}{2}},q^{\binom{3}{2}},q^{\binom{5}{2}},\ldots,q^{\binom{2n+1}{2}} )$ over $\ZZ[q]$.\label{Catalan:B}
\end{enumerate}
\end{corollary}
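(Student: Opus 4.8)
The plan is to realise both matrices in Corollary~\ref{Catalan:SNF} as Hankel matrices of moments of orthogonal polynomials and then quote Theorem~\ref{Main:Theorem}. The starting point is the $q$-analogue of the example in \S\ref{Catalan:no:q}: keep $b_n=0$, but take $\lambda_n=q^{n-1}$. By \eqref{general:moments} a Dyck path $\omega$ then receives weight $q^{\sum_j(\omega(j)-1)}$, the sum over its down steps (with $\omega(j)$ the height that the $j$th down step leaves from), and $\mu_{2m+1}=0$. First I would check that $\mu_{2m}=C_m(q)$. This follows from the first-return decomposition $\omega=U\sigma D\tau$ of a nonempty Dyck path: raising the enclosed subpath $\sigma$ (of semilength $k$) by one unit multiplies each of its $k$ down-step weights by $q$, hence $\wt(\sigma)$ by $q^k$, while the return step $D$ contributes $\lambda_1=1$; summing yields $\mu_{2(m+1)}=\sum_{k=0}^m q^k\mu_{2k}\mu_{2(m-k)}$, which is the recursion \eqref{q:Cat:numbers}. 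Equivalently, $\sum_m C_m(q)y^m$ has the Stieltjes continued fraction whose $k$th partial numerator is $q^{k-1}y$.

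Next I would invoke the classical even and odd contractions of this continued fraction. With $\widehat\lambda_i:=q^{i-1}$, the even contraction realises $(C_{i+j}(q))_{0\le i,j\le n}$ as the moment Hankel matrix of the orthogonal polynomials with $b_0=\widehat\lambda_1$, $b_k=\widehat\lambda_{2k}+\widehat\lambda_{2k+1}$, $\lambda_k=\widehat\lambda_{2k-1}\widehat\lambda_{2k}$, while the odd contraction realises $(C_{i+j+1}(q))_{0\le i,j\le n}$ as such a matrix with $b_0=\widehat\lambda_1+\widehat\lambda_2$, $b_k=\widehat\lambda_{2k+1}+\widehat\lambda_{2k+2}$, $\lambda_k=\widehat\lambda_{2k}\widehat\lambda_{2k+1}$; the latter produces a normalised moment sequence precisely because $C_1(q)=1$. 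All parameters lie in $\ZZ[q]$, so Theorem~\ref{Main:Theorem} gives, in each case, the special Smith normal form $\diag(1,\lambda_1,\lambda_1\lambda_2,\dots,\lambda_1\cdots\lambda_n)$ over $\ZZ[q]$.

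It remains to evaluate the diagonal entries, and the products telescope. For part~\eqref{Catalan:A}, $\lambda_1\cdots\lambda_k=\widehat\lambda_1\widehat\lambda_2\cdots\widehat\lambda_{2k}=\prod_{i=1}^{2k}q^{i-1}=q^{\binom{2k}{2}}$, and for part~\eqref{Catalan:B}, $\lambda_1\cdots\lambda_k=\widehat\lambda_2\widehat\lambda_3\cdots\widehat\lambda_{2k+1}=\prod_{i=2}^{2k+1}q^{i-1}=q^{\binom{2k+1}{2}}$, which are exactly the claimed forms. As a unifying remark, the same $q$-Dyck moment sequence makes the $(2n+1)\times(2n+1)$ matrix $(\mu_{i+j})$ equal, after sorting the indices by parity, to $(C_{i+j}(q))_{0\le i,j\le n}\oplus(C_{i+j+1}(q))_{0\le i,j\le n-1}$, whose special Smith normal form $\diag(q^{\binom{0}{2}},q^{\binom{1}{2}},\dots,q^{\binom{2n}{2}})$ given by Theorem~\ref{Main:Theorem} is just the merge of the two diagonals above. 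The only step carrying real content is the identification $\mu_{2m}=C_m(q)$---that the Carlitz $q$-Catalan numbers are the moments of the weighting $\lambda_n=q^{n-1}$---a classical continued fraction for which the first-return argument above is a short self-contained proof; everything afterwards is Theorem~\ref{Main:Theorem} plus the telescoping product of the $q^{i-1}$'s, so I expect no obstacle there.
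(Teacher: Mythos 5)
Your proposal is correct and follows essentially the same route as the paper: the paper also starts from $b_n=0$, $\lambda_n=q^{n-1}$ (so $\mu_{2m}=C_m(q)$), applies the odd--even contraction of Proposition~\ref{Even:Odd} to realise $(C_{i+j}(q))$ and $(C_{i+j+1}(q))$ as moment Hankel matrices, and invokes Theorem~\ref{Main:Theorem}, all of which it packages as Theorem~\ref{SNF:EO:Theorem}. Your explicit first-return verification of $\mu_{2m}=C_m(q)$ and the telescoping of the $q^{i-1}$'s fill in details the paper leaves to \S\ref{q:catalan:first} and \eqref{Cat1}--\eqref{Cat2}.
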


\section{Examples}\label{Section:Examples}\noindent
Theorem~\ref{Main:Theorem} also gives new results 
for $q$-Catalan numbers, $q$-Motzkin numbers, $q$-Stirling numbers, $q$-Matching numbers, 
$q$-factorials,  $q$-double factorials, as well as    
more striking generating functions such as 
Simion and Stanton's octabasic 
Laguerre moments which count permutations according to eight different statistics.  
There are many interesting moment sequences 
and this is just a sampling. 

\subsection{$\mathbf q$-Catalan}\label{q:catalan:first} 
If $b_n=0$ and $\lambda_n=q^{n-1}$, then $\mu_n$ counts 
length-$n$ Dyck paths according to area between the path 
and the zig-zag one of height $1$ (Fig.~\ref{Example:q-Catalan}) so that 
\begin{equation}
\mu_n=C_n^*(q)=\begin{cases} C_{n/2}(q) & \text{if $n$ is even,}\\
0 & \text{if $n$ is odd.}\end{cases}
\end{equation}
\begin{corollary}\label{OE:Catalan:SNF}
$(C_{i+j}^*(q))_{0\leq i,j\leq n}$ has SSNF $\diag(1,q^{\binom{1}{2}},q^{\binom{2}{2}},\ldots,q^{\binom{n}{2}})$ over $\ZZ[q]$.\qed
\end{corollary}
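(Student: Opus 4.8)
The plan is to read Corollary~\ref{OE:Catalan:SNF} off Theorem~\ref{Main:Theorem} by the specialization $b_n=0$, $\lambda_n=q^{n-1}$, working in $\K=\ZZ[q]$ (so that $\ZZ[b,\lambda]$ collapses to $\ZZ[q]$). With this choice the three-term recurrence \eqref{three-term} produces monic $p_n(x)\in\ZZ[q][x]$, and Theorem~\ref{Main:Theorem} says that the Hankel matrix of the associated moments has SSNF $\diag(1,\lambda_1,\lambda_1\lambda_2,\ldots,\lambda_1\cdots\lambda_n)$ over $\ZZ[q]$. Since $\lambda_1\lambda_2\cdots\lambda_m=q^{0+1+\cdots+(m-1)}=q^{\binom{m}{2}}$, the diagonal becomes $(q^{\binom{0}{2}},q^{\binom{1}{2}},\ldots,q^{\binom{n}{2}})=(1,q^{\binom{1}{2}},\ldots,q^{\binom{n}{2}})$, which is exactly the claimed list of $n+1$ entries. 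So the only thing left to verify is that the moment $\mu_n$ of this $\L$ is $C^*_n(q)$.

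To that end I would unwind \eqref{general:moments} and \eqref{weight:formula}. Because $b_n=0$, only Motzkin paths with no level steps survive, i.e. Dyck paths, which forces $\mu_n=0$ for odd $n$. For $n=2m$ the weight of a Dyck path $\omega$ is $\prod_j\lambda_{\omega(j)}=q^{\sum_j(\omega(j)-1)}$, the product over the down-steps $j$ (those with $\omega'(j)=-1$), where $\omega(j)$ is the height from which the step descends; this exponent is precisely the area between $\omega$ and the height-$1$ zigzag path, so $\mu_{2m}$ is the area generating function for Dyck paths of semilength $m$.

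The one step with genuine content is matching this generating function to the recurrence \eqref{q:Cat:numbers}, and the plan is the standard first-return decomposition. Write a nonempty Dyck path of semilength $m+1$ uniquely as $U\,P\,D\,Q$: $U$ the initial up-step, $D$ the first return to $0$, $P$ a Dyck path of semilength $k$ shifted up one unit, $Q$ a Dyck path of semilength $m-k$ at ground level. The step $D$ descends from height $1$ and contributes $0$; $Q$ contributes its own area; shifting $P$ up raises the starting height of each of its $k$ down-steps by one, so $P$ contributes $\mathrm{area}(P)+k$. Summing over $k$ gives $\mu_{2(m+1)}=\sum_{k=0}^{m}q^k\,\mu_{2k}\,\mu_{2(m-k)}$ with $\mu_0=1$, which is the defining relation for $C_{m+1}(q)$ in \eqref{q:Cat:numbers}. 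Hence $\mu_{2m}=C_m(q)$, i.e. $\mu_n=C^*_n(q)$, and the corollary follows. I expect no real obstacle: the bookkeeping of the area statistic under concatenation is routine, and all the substance is already packaged in Theorem~\ref{Main:Theorem}.
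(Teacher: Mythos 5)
Your proof is correct and follows the same route as the paper: specialize Theorem~\ref{Main:Theorem} to $b_n=0$, $\lambda_n=q^{n-1}$, note $\lambda_1\cdots\lambda_m=q^{\binom{m}{2}}$, and identify the moments with $C^*_n(q)$ via the area statistic on Dyck paths. The only difference is that you spell out the first-return decomposition verifying the recurrence \eqref{q:Cat:numbers}, a standard fact the paper simply asserts (with Figure~\ref{Example:q-Catalan}) as part of the known combinatorics of these moments; your verification of it is accurate.
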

\noindent
In \S\ref{q:Catalan:BS:Section} below 
we use a general result (Theorem~\ref{SNF:EO:Theorem}) 
to read off the two Bessenrodt--Stanley results directly from this 
first and most basic example of ours.
\begin{figure}[hbt]\centering
\begin{tikzpicture}[scale=.3]
\draw [very thick] (0,0) -- (1,1) -- (2,0) -- (3,1) -- (4,0) -- (5,1) -- (6,0);
\draw [anchor=south west] (1.1,0.1) node{\small$1$};
\draw [anchor=south west] (3.1,0.1) node{\small$1$};
\draw [anchor=south west] (5.1,0.1) node{\small$1$};
\end{tikzpicture}\ \ 
\begin{tikzpicture}[scale=.3]
\draw [very thick] (0,0) -- (1,1) -- (2,0) -- (3,1) -- (4,2) -- (5,1) -- (6,0);
\draw [anchor=south west] (1.1,0.1) node{\small$1$};
\draw [anchor=south west] (4.1,1.1) node{\small$q$};
\draw [anchor=south west] (5.1,0.1) node{\small$1$};
\draw [dotted] (3,1) -- (4,0) --(5,1);
\end{tikzpicture}\ \ 
\begin{tikzpicture}[scale=.3]
\draw [very thick] (0,0) -- (1,1) -- (2,2) -- (3,1) -- (4,0) -- (5,1) -- (6,0);
\draw [anchor=south west] (2.1,1.1) node{\small$q$};
\draw [anchor=south west] (3.1,0.1) node{\small$1$};
\draw [anchor=south west] (5.1,0.1) node{\small$1$};
\draw [dotted] (1,1) -- (2,0) --(3,1);
\end{tikzpicture}\ \ 
\begin{tikzpicture}[scale=.3]
\draw [very thick] (0,0) -- (1,1) -- (2,2) -- (3,1) -- (4,2) -- (5,1) -- (6,0);
\draw [anchor=south west] (2.1,1.1) node{\small$q$};
\draw [anchor=south west] (4.1,1.1) node{\small$q$};
\draw [anchor=south west] (5.1,0.1) node{\small$1$};
\draw [dotted] (1,1) -- (2,0) --(3,1) -- (4,0) -- (5,1);
\end{tikzpicture}\ \ 
\begin{tikzpicture}[scale=.3]
\draw [very thick] (0,0) -- (1,1) -- (2,2) -- (3,3) -- (4,2) -- (5,1) -- (6,0);
\draw [anchor=south west] (3.1,2.1) node{\small$q^2$};
\draw [anchor=south west] (4.1,1.1) node{\small$q$};
\draw [anchor=south west] (5.1,0.1) node{\small$1$};
\draw [dotted] (1,1) -- (2,0) -- (4,2);
\draw [dotted] (2,2) -- (4,0) -- (5,1);
\end{tikzpicture}%
\caption{$C_3^*(q)=1+2q+q^2+q^3$.}\label{Example:q-Catalan}
\end{figure}
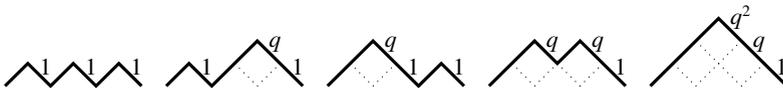

\subsection{$\mathbf q$-Motzkin} 
The $q$-Motzkin number given by
${\rm Motz}_n(q)=\sum_{k= 0}^{n/2}\binom{n}{2k}C_k(q)$ 
is the $n$-th moment $\mu_n$ when $b_n=1$ and $\lambda_n=q^{n-1}$.
\begin{corollary}
$({\rm Motz}_{i+j}(q))_{0\leq i,j\leq n}$ has SSNF $\diag (1,q^{\binom{1}{2}},q^{\binom{2}{2}},\ldots, q^{\binom{n}{2}})$ over $\ZZ[q]$.\qed
\end{corollary}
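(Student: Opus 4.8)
The plan is to invoke Theorem~\ref{Main:Theorem} directly, so the only genuine work is to check that the moments $\mu_n$ attached to the choice $b_n=1$, $\lambda_n=q^{n-1}$ are the $q$-Motzkin numbers ${\rm Motz}_n(q)=\sum_{k=0}^{\lfloor n/2\rfloor}\binom{n}{2k}C_k(q)$. Granting that, Theorem~\ref{Main:Theorem} gives that $(\mu_{i+j})_{0\leq i,j\leq n}$ has SSNF $\diag(1,\lambda_1,\lambda_1\lambda_2,\ldots,\lambda_1\lambda_2\cdots\lambda_n)$ over $\ZZ[b,\lambda]$, and applying the specialization homomorphism $b_n\mapsto 1$, $\lambda_n\mapsto q^{n-1}$ (which sends $\SL$ to $\SL$) turns the $k$-th diagonal entry into $\lambda_1\lambda_2\cdots\lambda_k=\prod_{j=1}^k q^{j-1}=q^{\binom{k}{2}}$ over $\ZZ[q]$. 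Since $\binom{0}{2}=\binom{1}{2}=0$ this is exactly $\diag(1,q^{\binom{1}{2}},q^{\binom{2}{2}},\ldots,q^{\binom{n}{2}})$.

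For the moment identification I would argue combinatorially from \eqref{general:moments}. Fix a Motzkin path $\omega$ of length $n$ with $\omega(1)=\omega(n+1)=0$. Because $b_i=1$, each level step of $\omega$ contributes the factor $1$ to $\wt(\omega)$, so only the down steps matter, a down step leaving height $j$ contributing $\lambda_j=q^{j-1}$. Now record the set of $2k$ positions occupied by the up and down steps of $\omega$: there are $\binom{n}{2k}$ such sets, and deleting the level steps turns the remaining steps into a Dyck path $\tilde\omega$ of length $2k$ from $0$ to $0$. Since level steps do not change height, every down step of $\omega$ leaves the same height as the corresponding down step of $\tilde\omega$, so $\wt(\omega)=\wt(\tilde\omega)$, where on the right we use the weighting $b_n=0$, $\lambda_n=q^{n-1}$ of \S\ref{q:catalan:first}. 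Summing over all $\omega$ by first choosing $k$, then one of the $\binom{n}{2k}$ position sets, then $\tilde\omega$, and using \S\ref{q:catalan:first} (which gives $\sum_{\tilde\omega}\wt(\tilde\omega)=C^*_{2k}(q)=C_k(q)$ over Dyck paths of length $2k$) yields $\mu_n=\sum_{k=0}^{\lfloor n/2\rfloor}\binom{n}{2k}C_k(q)={\rm Motz}_n(q)$.

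The one point needing care — really the only substantive step — is the height-preservation claim: that a down step's departure height is unchanged by the surrounding level steps, so that its $q$-weight is literally transported from the Motzkin path to its underlying Dyck path, making the $\binom{n}{2k}$-factorization exact. This is immediate from the definition of $\omega'$, but it is what the argument rests on. An alternative that bypasses even this observation: replacing $b_n$ by $b_n+c$ corresponds to the substitution $x\mapsto x+c$ in the functional $\L$, hence to the binomial transform $\mu_n\mapsto\sum_k\binom{n}{k}c^{n-k}\mu_k$ of the moments; taking $c=1$ and the $q$-Catalan moments $C^*_k(q)$ of \S\ref{q:catalan:first} (which vanish in odd degree) reproduces $\sum_{k}\binom{n}{2k}C_k(q)$ at once. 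Either way, the corollary is a specialization of Theorem~\ref{Main:Theorem}.
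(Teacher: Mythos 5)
Your proposal is correct and follows exactly the route the paper intends: the corollary is an immediate specialization of Theorem~\ref{Main:Theorem} at $b_n=1$, $\lambda_n=q^{n-1}$, with $\lambda_1\cdots\lambda_k=q^{\binom{k}{2}}$, once the moments are identified as ${\rm Motz}_n(q)$. The paper simply asserts that moment identification and writes no proof; your level-step-deletion argument (and the binomial-transform alternative) correctly supplies the detail it leaves implicit.
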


\subsection{$\mathbf q$-Stirling}\label{Section:Stirling}
The Charlier polynomials $C_n^a(x)$ have moments $\mu_n=\sum_{k=0}^n S(n,k)a^k$ 
where $S(n,k)$ is the Stirling number of the second kind which counts partitions of 
$[n]=\{1,2,\ldots,n\}$ 
into $k$ blocks.  
M\'edicis--Stanton--White \cite{MSW} defined $q$-Charlier polynomials 
\begin{equation}
C_{n+1}^a(x;q)=(x-aq^n-[n]_q)C_n^a(x;q)-a q^{n-1} [n]_qC_{n-1}^a(x;q)
\end{equation}
 and showed that the  
moments are the $q$-analogues $\mu_n=B_n(a,q)$ given by 
$q$-Stirling numbers 
\begin{equation}\label{q:Bell:first}
B_n(a,q)=\sum_{k=0}^n S_q(n,k)a^k,
\end{equation} 
\begin{equation}\label{q:Stirling}
S_q(n,k)=S_q(n-1,k-1)+[k]_q S_q(n-1,k),\quad S_q(0,k)=\delta_{0,k}
\end{equation} 
where $[n]_q=1+q+\ldots+q^{n-1}$. 
The combinatorial interpretation 
of these moments in terms of set partitions 
$\pi$ uses the number of blocks, $\block(\pi)$, and 
another statistic $r\!s(\pi)$. If $\Pi_n$ is the 
set of all set partitions of $[n]$, 
\begin{equation}
\mu_n=B_n(a,q)=\sum_{\pi\in\Pi_n} a^{{\rm{blocks}}(\pi)}q^{r\!s(\pi)}.
\end{equation}
\begin{corollary}
$(B_{i+j}(a,q))_{0\leq i,j\leq n}$ has SSNF $\diag(1,a^1q^{\binom{1}{2}}[1]!_q,
a^2q^{\binom{2}{2}}[2]!_q,\ldots, a^nq^{\binom{n}{2}}[n]!_q)$ over $\ZZ[a,q]$.\qed
\end{corollary}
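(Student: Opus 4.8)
The plan is to apply Theorem~\ref{Main:Theorem} directly to the $q$-Charlier recurrence. Reading off the coefficients from the recurrence for $C_{n+1}^a(x;q)$ displayed in \S\ref{Section:Stirling}, we have $b_n = aq^n + [n]_q$ and $\lambda_n = aq^{n-1}[n]_q$. Since $a$ and $q$ are indeterminates, these lie in $\ZZ[a,q]$, so Theorem~\ref{Main:Theorem} applies with $R = \ZZ[a,q]$ and tells us that the Hankel matrix $(\mu_{i+j})_{0\le i,j\le n} = (B_{i+j}(a,q))_{0\le i,j\le n}$ has SSNF $\diag(1,\lambda_1,\lambda_1\lambda_2,\ldots,\lambda_1\lambda_2\cdots\lambda_n)$.

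The only thing left is to simplify the products $\lambda_1\lambda_2\cdots\lambda_k$. First I would compute
\[
\lambda_1\lambda_2\cdots\lambda_k = \prod_{j=1}^{k} a q^{j-1}[j]_q
= a^k \, q^{\,0+1+\cdots+(k-1)} \, \prod_{j=1}^k [j]_q
= a^k \, q^{\binom{k}{2}} \, [k]!_q,
\]
using $\sum_{j=1}^k (j-1) = \binom{k}{2}$ and the standard notation $[k]!_q = [1]_q[2]_q\cdots[k]_q$ (with $[0]!_q = 1$). This matches the claimed diagonal entry $a^k q^{\binom{k}{2}}[k]!_q$ for each $k = 0, 1, \ldots, n$ (the $k=0$ term being $1$), which completes the proof.

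There is really no obstacle here: the entire content is the substitution of the $q$-Charlier $b_n$ and $\lambda_n$ into Theorem~\ref{Main:Theorem}, together with the elementary product simplification above. The combinatorial description of $\mu_n = B_n(a,q)$ in terms of set partitions weighted by $a^{\block(\pi)} q^{r\!s(\pi)}$ is not needed for the proof; it is recorded only to identify the moments with the $q$-Stirling generating function. If one wished, one could instead verify directly via the Motzkin-path formula~\eqref{general:moments} that these choices of $b_n$ and $\lambda_n$ reproduce $B_n(a,q)$, but the cited result of M\'edicis--Stanton--White~\cite{MSW} already supplies this, so the corollary follows immediately.

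\begin{proof}
By \S\ref{Section:Stirling} the $q$-Charlier polynomials $C_n^a(x;q)$ satisfy the three-term recurrence~\eqref{three-term} with $b_n = aq^n + [n]_q$ and $\lambda_n = aq^{n-1}[n]_q$, and their moments are $\mu_n = B_n(a,q)$. These $b_n,\lambda_n$ lie in $R = \ZZ[a,q]$, so Theorem~\ref{Main:Theorem} shows that $(B_{i+j}(a,q))_{0\le i,j\le n}$ has SSNF $\diag(1,\lambda_1,\lambda_1\lambda_2,\ldots,\lambda_1\cdots\lambda_n)$ over $\ZZ[a,q]$. Finally, for $1 \le k \le n$,
\[
\lambda_1\lambda_2\cdots\lambda_k = \prod_{j=1}^k a q^{j-1} [j]_q
= a^k \, q^{\binom{k}{2}} \, [k]!_q,
\]
which gives the stated diagonal entries.
\end{proof}
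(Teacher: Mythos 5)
Your proof is correct and matches the paper's intended argument exactly: the corollary is stated with an immediate \qed precisely because it is Theorem~\ref{Main:Theorem} applied to the $q$-Charlier data $b_n=aq^n+[n]_q$, $\lambda_n=aq^{n-1}[n]_q$ from \cite{MSW}, followed by the simplification $\lambda_1\cdots\lambda_k=a^kq^{\binom{k}{2}}[k]!_q$. Nothing is missing.
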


\subsection{}
\noindent
Kim--Stanton--Zeng \cite{KSZ} defined another 
sequence of $q$-Charlier polynomials
\begin{equation}
C_{n+1}(x,a;q)=(x-a-[n]_q)C_n(x,a;q)-a[n]_qC_{n-1}(x,a;q).
\end{equation}
They showed that the moments are the generating 
functions $\mu_n=\tilde{B}_n(a,q)$ given by 
\begin{equation}
\tilde{B}_n(a,q)=\sum_{\pi\in\Pi_n} a^{{\rm block}(\pi)}q^{\crossing(\pi)}.
\end{equation}
Here $\crossing(\pi)$ is the number of crossings 
in the diagram that has  
$1,2,\ldots, n$ 
written out along a horizontal line 
and an upper arc $i\to j$ for each pair $i<j$ such that 
$j$ is the next largest element in the block containing $i$. 
See Figure~\ref{Example:Pi:Crossings}. 

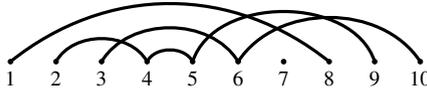
\begin{figure}[hbt]\centering
\begin{tikzpicture}[scale=.3]
\foreach \x in {1,...,10}
          \draw [fill] (2*\x,0) circle [radius=.1]  node[below]{\footnotesize$\x$};
\foreach \x/\y/\a in {3/6/0,2/4/0,4/5/0,5/9/-15,1/8/12,6/10/0} 
\draw [very thick] (2*\x,0) to [out=65-(\y-\x)*2-\a,in=180-(65-(\y-\x)*2-\a),relative] (2*\y,0);
\end{tikzpicture}
\caption{
The partition $\pi=\{\{1,8\},\{2,4,5,9\},\{3,6,10\},\{7\}\}$
drawn above has ${\rm block}(\pi)$ equal to $4$ and $\crossing(\pi)$ equal to $5$.
}\label{Example:Pi:Crossings}
\end{figure}

\begin{corollary}%
$(\tilde{B}_{i+j}(a,q))_{0\leq i,j\leq n}$ has SSNF $\diag(1,a^1[1]!_q,a^2[2]!_q,\ldots, a^n[n]!_q)$ over $\ZZ[a,q]$.\qed
\end{corollary}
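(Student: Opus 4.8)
The plan is to recognize this corollary as the specialization of Theorem~\ref{Main:Theorem} to the Kim--Stanton--Zeng $q$-Charlier polynomials $C_n(x,a;q)$. First I would match their recurrence
\[C_{n+1}(x,a;q)=(x-a-[n]_q)C_n(x,a;q)-a[n]_qC_{n-1}(x,a;q)\]
against the three-term recurrence \eqref{three-term}: this forces $b_n=a+[n]_q$ and $\lambda_n=a[n]_q$, both of which lie in $\ZZ[a,q]$. Next I would compute the diagonal entries that Theorem~\ref{Main:Theorem} predicts. Since $\lambda_i=a[i]_q$, we have $\lambda_1\lambda_2\cdots\lambda_k=a^k[1]_q[2]_q\cdots[k]_q=a^k[k]!_q$, so the predicted SSNF is exactly $\diag(1,a[1]!_q,a^2[2]!_q,\ldots,a^n[n]!_q)$.

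It then remains to connect the moments of $\{C_n(x,a;q)\}_{n\ge 0}$ with $\tilde B_n(a,q)$, and this is precisely the theorem of Kim--Stanton--Zeng \cite{KSZ} quoted above: the $n$-th moment equals $\sum_{\pi\in\Pi_n}a^{{\rm block}(\pi)}q^{\crossing(\pi)}=\tilde B_n(a,q)$. Hence $\mu_{i+j}=\tilde B_{i+j}(a,q)$, and the claim is immediate from Theorem~\ref{Main:Theorem}.

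The one point worth spelling out --- and it is the closest thing to an obstacle, though a very minor one --- is the change of base ring. Theorem~\ref{Main:Theorem} is phrased over $\ZZ[b,\lambda]$, but its proof in fact produces the explicit identity $PHP^{t}=D$ of \eqref{diag} with $P$ the (lower unitriangular, hence $\SL$) matrix of coefficients of the $p_i$. Applying the ring homomorphism $\ZZ[b,\lambda]\to\ZZ[a,q]$ determined by $b_n\mapsto a+[n]_q$ and $\lambda_n\mapsto a[n]_q$ carries this identity verbatim to one over $\ZZ[a,q]$ --- the image of a lower unitriangular matrix is still lower unitriangular --- so the SSNF descends without any further work. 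In particular no general statement about Smith normal forms being preserved under ring maps is needed; the certificate itself specializes.
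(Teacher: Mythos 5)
Your proposal is correct and is exactly the argument the paper intends: the corollary is stated with a bare \qed as an immediate specialization of Theorem~\ref{Main:Theorem} to the Kim--Stanton--Zeng recurrence with $b_n=a+[n]_q$, $\lambda_n=a[n]_q$, giving $\lambda_1\cdots\lambda_k=a^k[k]!_q$. Your remark that the explicit certificate $PHP^{t}=D$ specializes under the ring map $\ZZ[b,\lambda]\to\ZZ[a,q]$ is a correct and welcome elaboration of a point the paper leaves implicit.
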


\subsection{$\mathbf q$-Matchings} 
Ismail--Stanton--Viennot \cite{ISV} tell us that the polynomials given~by 
\begin{equation}
h_{n+1}(x)=(x-1)h_n(x)-q^{n-1}[n]_qh_{n-1}(x)
\end{equation}
have moments the matching polynomials $\mu_n={\rm Match}_n(q)$ given by  
\begin{equation}
{\rm Match}_n(q)=\sum_m q^{\crossing(m)+2\nesting(m)}=\sum_{k=0}^{n/2} \binom{n}{2k}[1]_q[3]_q\ldots [2k-1]_q.
\end{equation}
The first sum is over all matchings $m$ of $[n]$ 
(partitions of $[n]$ into blocks of size at most 2)
and 
${\rm nest}(m)$ is the number of pairs $\{i,j\},\{k,l\}\in m$ such that $i<k<l<j$.

\begin{corollary}%
$({\rm Match}_{i+j}(q))_{0\leq i,j\leq n}$ has SSNF $\diag(1,q^{\binom{1}{2}}[1]!_q,
q^{\binom{2}{2}}[2]!_q,\ldots, q^{\binom{n}{2}}[n]!_q)$ over $\ZZ[q]$.\qed
\end{corollary}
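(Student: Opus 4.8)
The plan is to obtain this as a direct specialization of Theorem~\ref{Main:Theorem}. First I would read the three-term recurrence parameters off the Ismail--Stanton--Viennot recursion $h_{n+1}(x)=(x-1)h_n(x)-q^{n-1}[n]_qh_{n-1}(x)$: this is exactly \eqref{three-term} with $b_n=1$ for all $n\geq 0$ and $\lambda_n=q^{n-1}[n]_q$ for all $n\geq 1$. Since \cite{ISV} identifies the moments of $\{h_n\}$ as $\mu_n={\rm Match}_n(q)$, the matrix in the corollary is precisely the Hankel matrix $(\mu_{i+j})_{0\leq i,j\leq n}$ of Theorem~\ref{Main:Theorem} for this choice of $b$ and $\lambda$.

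Next I would apply Theorem~\ref{Main:Theorem} over $\ZZ[b,\lambda]$, which gives $PHP^t=\diag(1,\lambda_1,\lambda_1\lambda_2,\ldots,\lambda_1\lambda_2\cdots\lambda_n)$ with $P$ the coefficient matrix of $p_0,\dots,p_n$, a lower unitriangular matrix and hence an element of $\SL$. I would then push this identity forward along the ring homomorphism $\ZZ[b,\lambda]\to\ZZ[q]$ sending $b_n\mapsto 1$, $\lambda_n\mapsto q^{n-1}[n]_q$. A lower unitriangular matrix remains lower unitriangular (so still in $\SL(n+1,\ZZ[q])$) under any ring homomorphism; the image of $H$ is $({\rm Match}_{i+j}(q))_{0\leq i,j\leq n}$; and the divisibility condition \eqref{SNF:d} survives because $\lambda_1\cdots\lambda_k$ divides $\lambda_1\cdots\lambda_{k+1}$ already in $\ZZ[b,\lambda]$, hence in the quotient $\ZZ[q]$.

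The only computation remaining is to simplify the specialized diagonal: $\prod_{j=1}^{k}\lambda_j=\prod_{j=1}^{k}q^{j-1}[j]_q=q^{\sum_{j=1}^{k}(j-1)}\prod_{j=1}^{k}[j]_q=q^{\binom{k}{2}}[k]!_q$, using $\sum_{j=1}^{k}(j-1)=\binom{k}{2}$. This yields the asserted SSNF $\diag\bigl(1,q^{\binom{1}{2}}[1]!_q,q^{\binom{2}{2}}[2]!_q,\ldots,q^{\binom{n}{2}}[n]!_q\bigr)$ over $\ZZ[q]$, and the accompanying determinant identity is then automatic from the Proposition in \S2. I do not expect a genuine obstacle here; the whole content is matching the recurrence coefficients and checking that unitriangularity and divisibility are preserved by specialization, both of which are routine.
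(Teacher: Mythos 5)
Your proposal is correct and is exactly the argument the paper intends (the corollary carries a \qed precisely because it is the immediate specialization $b_n=1$, $\lambda_n=q^{n-1}[n]_q$ of Theorem~\ref{Main:Theorem}, with $\lambda_1\cdots\lambda_k=q^{\binom{k}{2}}[k]!_q$). Your extra care about unitriangularity and divisibility surviving the specialization $\ZZ[b,\lambda]\to\ZZ[q]$ is sound but not a different route.
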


\subsection{$\mathbf q$-Perfect matchings}\label{q:perfect:matchings}
(Ismail--Stanton--Viennot \cite{ISV}) 
Replacing $x$ by $x+1$ in the last example gives the discrete $q$-Hermite polynomials 
\begin{equation}\label{res:disc:Hermits}
\tilde{h}_{n+1}(x)=x\tilde{h}_n(x)-q^{n-1}[n]_q\tilde{h}_{n-1}(x)
\end{equation}
whose moments $\mu_n=PM_n(q)$ count perfect matchings by crossings and nestings:
\begin{equation}
PM_n(q)=\sum_m q^{\crossing(m)+2\nesting(m)}=\begin{cases}
[1]_q[3]_q\ldots [n-1]_q &\text{if $n$ is even,}\\
0 & \text{if $n$ is odd,}
\end{cases}
\end{equation}
where the sum is over all perfect matchings $m$ of $[n]$ (all blocks of size $2$).
\begin{corollary}%
$(PM_{i+j}(q))_{0\leq i,j\leq n}$ has SSNF $\diag(1,q^{\binom{1}{2}}[1]!_q,q^{\binom{2}{2}}[2]!_q,\ldots, q^{\binom{n}{2}}[n]!_q)$ over $\ZZ[q]$.\qed
\end{corollary}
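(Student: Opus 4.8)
The plan is to recognize this corollary as a direct specialization of Theorem~\ref{Main:Theorem}. The discrete $q$-Hermite polynomials $\tilde h_n(x)$ in \eqref{res:disc:Hermits} are exactly the orthogonal polynomials produced by the three-term recurrence \eqref{three-term} with the specialization $b_n=0$ and $\lambda_n=q^{n-1}[n]_q$. Hence Theorem~\ref{Main:Theorem} applies verbatim over the ring $\ZZ[q]$, and says that the Hankel matrix $(\mu_{i+j})_{0\le i,j\le n}$ has SSNF
\begin{equation*}
\diag\bigl(1,\lambda_1,\lambda_1\lambda_2,\ldots,\lambda_1\lambda_2\cdots\lambda_n\bigr).
\end{equation*}

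So the only thing to verify is that, with this choice of $\lambda$, the diagonal entry $\lambda_1\lambda_2\cdots\lambda_k$ equals $q^{\binom{k}{2}}[k]!_q$, and that $\mu_n=PM_n(q)$. For the diagonal: $\lambda_1\lambda_2\cdots\lambda_k=\prod_{i=1}^k q^{i-1}[i]_q = q^{\sum_{i=1}^k(i-1)}\prod_{i=1}^k[i]_q = q^{\binom{k}{2}}[k]!_q$, which matches the claimed SSNF. For the moments: by \eqref{general:moments} with $b_n=0$, only Motzkin paths with no level steps contribute, i.e.\ Dyck-type paths, which pair up with perfect matchings of $[n]$; a standard computation (this is the content of Ismail--Stanton--Viennot~\cite{ISV}, invoked in \S\ref{q:perfect:matchings}) shows the weight of such a path records $\crossing(m)+2\nesting(m)$, giving $\mu_n=PM_n(q)$. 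In particular $\mu_n=0$ for odd $n$, consistent with the formula, and the Hankel matrix in the statement is genuinely the one to which the theorem applies.

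I would then simply write: this is Theorem~\ref{Main:Theorem} with $b_n=0$, $\lambda_n=q^{n-1}[n]_q$, together with the identity $\prod_{i=1}^k q^{i-1}[i]_q=q^{\binom{k}{2}}[k]!_q$. There is essentially no obstacle here; the one point requiring a word is the identification of the moments, but that is exactly the combinatorial result of~\cite{ISV} already quoted in \S\ref{q:perfect:matchings} and reproduced in the displayed formula for $PM_n(q)$, so it can be cited rather than reproved. Thus the proof is a one-line deduction, and we may safely end it with \qed as the statement of the corollary already does.
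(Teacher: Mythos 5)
Your proposal is correct and is exactly the argument the paper intends: the corollary is the specialization of Theorem~\ref{Main:Theorem} to $b_n=0$, $\lambda_n=q^{n-1}[n]_q$ (the discrete $q$-Hermite recurrence), with the moment identification $\mu_n=PM_n(q)$ taken from Ismail--Stanton--Viennot and the diagonal entries computed via $\prod_{i=1}^k q^{i-1}[i]_q=q^{\binom{k}{2}}[k]!_q$. The paper marks the statement with \qed precisely because this one-line deduction is all that is needed, so nothing further is required.
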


\subsection{Odd--Even trick}\label{OE:Section} 
In general if the $b_n$'s are all $0$, then 
the polynomials $p_n(x)$ are alternately even and odd 
so that there exist polynomials 
$e_n(x)$ and $o_n(x)$ that satisfy 
\begin{equation}
p_{2n}(x)=e_n(x^2),\quad p_{2n+1}(x)=xo_n(x^2).\label{EO:Polynomials}\end{equation}
The {\it odd-even trick} is the following observation. 
The polynomials $\{e_n(x)\}_{n\geq 0}$ and $\{o_n(x)\}_{n\geq 0}$ are themselves 
orthogonal polynomials and their moments are 
related to the moments $\mu_n$ of the original 
polynomials $\{p_n(x)\}_{n\geq 0}$ in a simple way.

\begin{proposition}[{{\cite[p.~40]{Chihara}}}]\label{Even:Odd}
If $b=(0,0,\ldots)$, then $\mu_{2n+1}=0$ and 
\begin{enumerate}[\rm(i)]
\item\label{EO:Trick:Evens} $\mu_{2n}$ is the $n$-th moment of the sequence $\{e_n(x)\}_{n\geq 0}$ defined by 
\[
e_{n+1}(x)=(x-\lambda_{2n}-\lambda_{2n+1})e_n(x)-\lambda_{2n-1}\lambda_{2n}e_{n-1}(x),
\quad e_{-1}(x)=0,\ e_0(x)=1,
\]
\item\label{EO:Trick:Odds} $\mu_{2n+2}$ is $\lambda_1$ times the $n$-th moment of the sequence $\{o_n(x)\}_{n\geq 0}$ 
defined by
\[
o_{n+1}(x)=(x-\lambda_{2n+1}-\lambda_{2n+2})o_n(x)-\lambda_{2n}\lambda_{2n+1}o_{n-1}(x),
\quad o_{-1}(x)=0,\ o_0(x)=1,
\]
\end{enumerate} 
for $\lambda_0=0$. 
These polynomials $e_n(x)$ and $o_n(x)$ are the unique ones that satisfy \eqref{EO:Polynomials}.
\end{proposition}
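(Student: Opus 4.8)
The plan is to reproduce the classical functional-analytic argument (as in Chihara). \emph{Step 1: existence and uniqueness of $e_n$ and $o_n$.} With $b=(0,0,\dots)$ the recurrence \eqref{three-term} reads $p_{n+1}(x)=xp_n(x)-\lambda_np_{n-1}(x)$, and an immediate induction gives $p_n(-x)=(-1)^np_n(x)$. Hence $p_{2n}$ lies in $\K[x^2]$ and $p_{2n+1}$ lies in $x\,\K[x^2]$; replacing $x^2$ by $y$ defines polynomials $e_n,o_n\in\K[y]$, each monic of degree $n$, and these are the only polynomials obeying \eqref{EO:Polynomials} since a polynomial $f(y)$ is determined by $f(x^2)$ (compare coefficients of even powers). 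This settles the last sentence of the proposition.

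\emph{Step 2: the two recurrences.} Substituting \eqref{EO:Polynomials} into the instances $n=2m$ and $n=2m+1$ of \eqref{three-term}---and cancelling the common factor $x$ from the first, which is legitimate since multiplication by $x$ is injective on $\K[x]$---yields the coupled relations
\[
o_m(y)=e_m(y)-\lambda_{2m}\,o_{m-1}(y),\qquad e_{m+1}(y)=y\,o_m(y)-\lambda_{2m+1}\,e_m(y)\qquad(m\ge 0),
\]
with $\lambda_0=0$ and $e_{-1}=o_{-1}=0$. Using the left relation to eliminate $o_m$ from the right one, and then the right relation with $m$ shifted down by $1$ to rewrite $y\,o_{m-1}$, produces exactly the three-term recurrence of part \eqref{EO:Trick:Evens}; symmetrically, solving the left relation for $e_m$ and feeding it into the right one gives the recurrence of part \eqref{EO:Trick:Odds}. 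One then checks $e_0=o_0=1$, $e_1=y-\lambda_1$, $o_1=y-\lambda_1-\lambda_2$ directly from $p_0,\dots,p_3$ and confirms that these are consistent with the two recurrences, the convention $\lambda_0=0$ neutralizing the boundary terms at $m=0$.

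\emph{Step 3: the moments.} When $b=0$, any Motzkin path with a level step has weight $0$ by \eqref{weight:formula}, and a path with all $|\omega'|=1$ of odd length cannot return to its starting height; hence $\mu_{2n+1}=0$ by \eqref{general:moments}. Define linear functionals $\mathcal M,\mathcal N\colon\K[y]\to\K$ by $\mathcal M(f(y))=\L(f(x^2))$ and $\mathcal N(f(y))=\L(x^2f(x^2))$, so that $\mathcal M(y^n)=\mu_{2n}$ and $\mathcal N(y^n)=\mu_{2n+2}$. From \eqref{EO:Polynomials} we have $e_k(x^2)=p_{2k}(x)$ and $x\,o_k(x^2)=p_{2k+1}(x)$, so with $p_0=1$, $p_1=x$, and the orthogonality \eqref{orthogonal:formula} we get $\mathcal M(e_k)=\L(p_0p_{2k})=\delta_{k0}$ and $\mathcal N(o_k)=\L(p_1p_{2k+1})=\lambda_1\delta_{k0}$. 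Since $e_0,e_1,\dots$ (resp.\ $o_0,o_1,\dots$) are monic of degrees $0,1,2,\dots$ and hence a basis of $\K[y]$ over $\K$, and since the moment functionals of $\{e_n\}$ and of $\{o_n\}$ take the value $\delta_{k0}$ on these same bases (again by \eqref{orthogonal:formula}, now for the $e$- and $o$-systems), we conclude $\mathcal M=(\text{moment functional of }\{e_n\})$ and $\mathcal N=\lambda_1\cdot(\text{moment functional of }\{o_n\})$. Evaluating at $y^n$ gives parts \eqref{EO:Trick:Evens} and \eqref{EO:Trick:Odds}.

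\emph{Main obstacle.} The one place that needs care is the index bookkeeping in Step 2: each of the two eliminations uses the coupled relations twice, and the spurious terms at $m=0$ must be absorbed into $\lambda_0=0$. A secondary point, handled in Step 3, is that we work over $\ZZ[b,\lambda]$ rather than a field, so one cannot simply divide $\mu_{2n+2}$ by $\lambda_1$; this is circumvented by pinning down $\mathcal M$ and $\mathcal N$ through their values on the \emph{monic} bases $\{e_k\}$ and $\{o_k\}$ rather than by solving for the moments one at a time.
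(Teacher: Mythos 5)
Your proof is correct. The paper does not actually prove this proposition---it is quoted from Chihara, p.~40, with no argument given---so there is nothing in the paper to compare against; your reconstruction (parity of the $p_n$ giving existence and uniqueness of $e_n,o_n$; elimination between the coupled relations $o_m=e_m-\lambda_{2m}o_{m-1}$ and $e_{m+1}=y\,o_m-\lambda_{2m+1}e_m$ to obtain the two separate three-term recurrences; and identification of the even and odd moment functionals through their values $\delta_{k0}$ and $\lambda_1\delta_{k0}$ on the monic bases $\{e_k\}$ and $\{o_k\}$ via \eqref{orthogonal:formula}) is the standard argument, correctly adapted to the setting of the polynomial ring $\ZZ[\lambda]$ where one cannot divide by $\lambda_1$.
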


\begin{corollary}\label{EO:SNF:Corollary}
If $b=(0,0,\ldots)$ and $\mu_n'',\mu_n'$ are the $n$-th moments 
of the polynomials $\{e_n(x)\}$, $\{o_n(x)\}$ defined by \eqref{EO:Polynomials}, 
then over the ring $\ZZ[\lambda]=\ZZ[\lambda_1,\lambda_2,\ldots]$ 
\begin{enumerate}[\rm(i)]
\item the matrix $(\mu_{i+j}'')_{0\leq i,j\leq n}$ has SSNF $\diag(1,\lambda_1\lambda_2,\lambda_1\lambda_2\lambda_3\lambda_4,\ldots,\lambda_1\lambda_2\ldots\lambda_{2n-1}\lambda_{2n})$,\label{SNF:Even:Polynomials}
\item the matrix $(\mu_{i+j}')_{0\leq i,j\leq n}$ has SSNF $\diag(1,\lambda_2\lambda_3,\lambda_2\lambda_3\lambda_4\lambda_5,\ldots, \lambda_2\lambda_3\ldots \lambda_{2n}\lambda_{2n+1})$.\label{SNF:Odd:Polynomials}
\end{enumerate}
\end{corollary}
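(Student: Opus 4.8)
\emph{Proof plan.} The idea is to deduce both parts at once by feeding the two auxiliary orthogonal polynomial sequences of Proposition~\ref{Even:Odd} into Theorem~\ref{Main:Theorem}. The one preliminary remark needed is that Theorem~\ref{Main:Theorem}, although phrased over the polynomial ring $\ZZ[b,\lambda]$ in the indeterminates $b_n,\lambda_n$, in fact holds over \emph{any} commutative ring $R$ for \emph{any} choice of recurrence data $b_n,\lambda_n\in R$: its proof uses only that each $p_m$ is monic of degree $m$ over $R$, so that the coefficient matrix $P$ is lower unitriangular and hence lies in $\SL(n+1,R)$, together with the identity $PHP^{t}=\diag(1,\lambda_1,\lambda_1\lambda_2,\dots,\lambda_1\cdots\lambda_n)$ coming from the orthogonality relation~\eqref{orthogonal:formula}, which is a universal identity established by a sign-reversing involution and so survives every specialisation. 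Condition~(c) of the definition of Smith normal form is automatic here because each diagonal entry divides the next. (Equivalently, one may push the $\SL$-equivalence $PHP^{t}=D$ of Theorem~\ref{Main:Theorem} forward along the ring homomorphism $\ZZ[b,\lambda]\to\ZZ[\lambda]$ of interest, which preserves both $\SL$ and the divisibility chain on the diagonal of $D$.)

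Granting this, I would take $R=\ZZ[\lambda]$ and apply the ring-level form of Theorem~\ref{Main:Theorem} twice. By Proposition~\ref{Even:Odd} the $e_n(x)$ form a sequence of orthogonal polynomials over $\ZZ[\lambda]$ whose three-term recurrence has $\lambda_{2n-1}\lambda_{2n}$ in place of the $\lambda_n$ of~\eqref{three-term}, and whose $n$-th moment is the $\mu_n''$ of the statement; similarly the $o_n(x)$ are orthogonal over $\ZZ[\lambda]$ with $\lambda_{2n}\lambda_{2n+1}$ in place of $\lambda_n$, and $n$-th moment $\mu_n'$. Hence the $(k+1)$-st diagonal entry of the SSNF of $(\mu_{i+j}'')_{0\le i,j\le n}$ is $\prod_{t=1}^{k}\lambda_{2t-1}\lambda_{2t}$ and that of $(\mu_{i+j}')_{0\le i,j\le n}$ is $\prod_{t=1}^{k}\lambda_{2t}\lambda_{2t+1}$, for $k=0,1,\dots,n$.

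The final step is the routine regrouping of these telescoping products, $\prod_{t=1}^{k}\lambda_{2t-1}\lambda_{2t}=\lambda_1\lambda_2\cdots\lambda_{2k}$ and $\prod_{t=1}^{k}\lambda_{2t}\lambda_{2t+1}=\lambda_2\lambda_3\cdots\lambda_{2k+1}$, which turns the two diagonal matrices into precisely the ones asserted in parts (i) and (ii). I do not anticipate a real obstacle: the only things demanding care are the index conventions of Proposition~\ref{Even:Odd} (the convention $\lambda_0=0$, and the fact that the $e$- and $o$-recurrences begin their $\lambda$-sequences at $\lambda_1\lambda_2$ and $\lambda_2\lambda_3$ respectively) and, more essentially, the observation in the first paragraph that the proof of Theorem~\ref{Main:Theorem} never used that $b_n,\lambda_n$ are indeterminates and therefore applies verbatim to the polynomial recurrence data produced by Proposition~\ref{Even:Odd}.
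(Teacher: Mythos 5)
Your proposal is correct and is exactly the argument the paper intends (the corollary is left as an immediate consequence of Theorem~\ref{Main:Theorem} applied to the sequences $\{e_n\}$ and $\{o_n\}$ with the recurrence data of Proposition~\ref{Even:Odd}): the index bookkeeping $\prod_{t=1}^{k}\lambda_{2t-1}\lambda_{2t}=\lambda_1\cdots\lambda_{2k}$ and $\prod_{t=1}^{k}\lambda_{2t}\lambda_{2t+1}=\lambda_2\cdots\lambda_{2k+1}$ is right. Your preliminary remark that Theorem~\ref{Main:Theorem} holds for arbitrary recurrence data over any commutative ring (or, equivalently, pushes forward along the specialisation $\ZZ[b,\lambda]\to\ZZ[\lambda]$) is a legitimate and worthwhile point that the paper leaves implicit.
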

\noindent
Corollary~\ref{EO:SNF:Corollary} and Proposition~\ref{Even:Odd} together 
refine the determinant identity (cf.~\cite[Ex.~8.8]{Chihara})
\begin{equation}
\det\, 
(\mu_{i+j})_{0\leq i,j\leq n}
=
\Big[
\det\, 
(\mu_{2i+2j})_{0\leq i,j\leq \lfloor n/2 \rfloor}
\Big]
\Big[
\det\,(\mu_{2i+2j+2})_{0\leq i,j\leq \lfloor (n-1)/2\rfloor}
\Big]
\end{equation}
which holds for $b=(0,0,\ldots)$. 
This is the next result.

\begin{theorem} \label{SNF:EO:Theorem}
Put $s_k=\lambda_1\lambda_2\ldots \lambda_k$ so that $s_0=1$. 
If $b=(0,0,\ldots)$, then over $\ZZ[\lambda]$ 
\begin{enumerate}[\rm(i)]
\item the matrix $(\mu_{i+j})_{0\leq i,j\leq n}$ has SSNF $\diag(s_0,s_1,s_2,\ldots, s_n)$, 
\label{SNF:Full}
\item the matrix $(\mu_{2i+2j})_{0\leq i,j\leq n}$ has SSNF $\diag(s_0,s_2,s_4,\ldots, s_{2n})$,\label{SNF:Even}
\item the matrix $(\mu_{2i+2j+2})_{0\leq i,j\leq n}$ has SSNF $\diag(s_1,s_3,s_5,\ldots,s_{2n+1})$.\label{SNF:Odd}
\end{enumerate}
\end{theorem}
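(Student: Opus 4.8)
The plan is to obtain all three parts from Theorem~\ref{Main:Theorem}, using the odd--even trick to handle \eqref{SNF:Even} and \eqref{SNF:Odd}.

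Part~\eqref{SNF:Full} is simply the specialization $b=(0,0,\ldots)$ of Theorem~\ref{Main:Theorem}. When $b=0$ the recurrence \eqref{three-term} shows each $p_m(x)$ has coefficients in $\ZZ[\lambda]$ and \eqref{general:moments} shows each $\mu_n\in\ZZ[\lambda]$, so the lower unitriangular transition matrix $P$ and the Hankel matrix $H$ of \eqref{diag} are defined over $\ZZ[\lambda]$ with $P\in\SL$, and $PHP^{t}=\diag(1,\lambda_1,\ldots,\lambda_1\cdots\lambda_n)=\diag(s_0,s_1,\ldots,s_n)$. Nothing more is needed here.

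For \eqref{SNF:Even} I would feed the derived family $\{e_n(x)\}$ of Proposition~\ref{Even:Odd}\eqref{EO:Trick:Evens} into Theorem~\ref{Main:Theorem}. These $e_n$ satisfy a recurrence of the form \eqref{three-term} with $b$-sequence $(\lambda_{2n}+\lambda_{2n+1})_{n\geq0}$ and $\lambda$-sequence $(\lambda_{2n-1}\lambda_{2n})_{n\geq1}$, both lying in $\ZZ[\lambda]$, and their $n$-th moment equals $\mu_{2n}$; hence $(\mu_{2i+2j})_{0\leq i,j\leq n}$ is precisely the Hankel matrix of moments of $\{e_n\}$. Theorem~\ref{Main:Theorem} (whose transition matrix is unitriangular, hence stays in $\SL$ after its formal parameters are specialized into the subring $\ZZ[\lambda]$) gives this matrix the SSNF $\diag(t_0,t_1,\ldots,t_n)$ with $t_k=\prod_{j=1}^{k}\lambda_{2j-1}\lambda_{2j}$, and this product telescopes to $t_k=\lambda_1\lambda_2\cdots\lambda_{2k}=s_{2k}$. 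This is just Corollary~\ref{EO:SNF:Corollary}\eqref{SNF:Even:Polynomials} rewritten in the $s$-notation, which I could equally well cite directly.

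Part~\eqref{SNF:Odd} runs the same argument on the family $\{o_n(x)\}$ of Proposition~\ref{Even:Odd}\eqref{EO:Trick:Odds}: it obeys \eqref{three-term} with $b$-sequence $(\lambda_{2n+1}+\lambda_{2n+2})_{n\geq0}$ and $\lambda$-sequence $(\lambda_{2n}\lambda_{2n+1})_{n\geq1}$ in $\ZZ[\lambda]$, and its $n$-th moment $\mu_n'$ satisfies $\mu_{2n+2}=\lambda_1\mu_n'$, so $(\mu_{2i+2j+2})_{0\leq i,j\leq n}=\lambda_1\cdot(\mu'_{i+j})_{0\leq i,j\leq n}$. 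Theorem~\ref{Main:Theorem} gives $(\mu'_{i+j})_{0\leq i,j\leq n}$ the SSNF $\diag(u_0,\ldots,u_n)$ with $u_k=\prod_{j=1}^{k}\lambda_{2j}\lambda_{2j+1}=\lambda_2\lambda_3\cdots\lambda_{2k+1}$, and the remaining step is the observation that multiplying a matrix through by the scalar $\lambda_1$ preserves the SSNF property: if $P(\mu'_{i+j})Q=\diag(u_0,\ldots,u_n)$ with $P,Q\in\SL$, then $P(\lambda_1(\mu'_{i+j}))Q=\diag(\lambda_1u_0,\ldots,\lambda_1u_n)$, where $P,Q$ are unchanged, the right side is still diagonal, and the divisor chain $u_0\mid u_1\mid\cdots\mid u_n$ passes to $\lambda_1u_0\mid\lambda_1u_1\mid\cdots\mid\lambda_1u_n$. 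Since $\lambda_1u_k=\lambda_1\lambda_2\cdots\lambda_{2k+1}=s_{2k+1}$, this gives $(\mu_{2i+2j+2})_{0\leq i,j\leq n}$ the SSNF $\diag(s_1,s_3,\ldots,s_{2n+1})$.

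I do not anticipate a genuine obstacle; the proof is telescoping $\lambda$-products together with the remark that Theorem~\ref{Main:Theorem} applies verbatim to the orthogonal families $\{e_n\}$ and $\{o_n\}$ supplied by Proposition~\ref{Even:Odd}. The only point meriting an explicit word is the scalar-multiplication step in part~\eqref{SNF:Odd}, which is handled above.
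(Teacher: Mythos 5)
Your proof is correct and follows essentially the same route as the paper: part (i) is Theorem~\ref{Main:Theorem} specialized, and parts (ii) and (iii) come from applying Theorem~\ref{Main:Theorem} to the derived families $\{e_n\}$ and $\{o_n\}$ of Proposition~\ref{Even:Odd} (i.e.\ Corollary~\ref{EO:SNF:Corollary}), with the same telescoping of $\lambda$-products and the same observation that scaling by $\lambda_1$ carries the SSNF $\diag(u_0,\ldots,u_n)$ to $\diag(\lambda_1u_0,\ldots,\lambda_1u_n)$.
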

\begin{proof}
\eqref{SNF:Full} restates Theorem~\ref{Main:Theorem}. 
\eqref{SNF:Even} restates Corollary~\ref{EO:SNF:Corollary}\eqref{SNF:Even:Polynomials} 
by Proposition~\ref{Even:Odd}\eqref{EO:Trick:Evens}. 
For \eqref{SNF:Odd} take 
Corollary~\ref{EO:SNF:Corollary}\eqref{SNF:Odd:Polynomials} 
which implies that the matrix $(\lambda_1\mu_{i+j}')_{0\leq i,j\leq n}$ has SSNF 
$\diag(s_1,s_3,\ldots, s_{2n+1})$ and then 
use Proposition~\ref{Even:Odd}\eqref{EO:Trick:Odds} to rewrite $\lambda_1\mu_{i+j}'$ as $\mu_{2i+2j+2}$.
\end{proof}

\subsection{$\mathbf q$-Catalan}\label{q:Catalan:BS:Section} 
Theorem~\ref{SNF:EO:Theorem} applies to our first and most basic example 
in \S\ref{q:catalan:first} and gives the Bessenrodt--Stanley result in 
Corollary~\ref{Catalan:SNF}. 
In the case of the $q$-Chebyshev polynomials from \S\ref{q:catalan:first} where
\begin{equation}
p_{n+1}(x)=xp_n(x)-q^{n-1}p_{n-1}(x)
\end{equation}
Proposition~\ref{Even:Odd} says that 
$C_0(q),C_1(q),C_2(q),\ldots$ is the moment sequence for 
\begin{equation}\label{Cat1}
q_{n+1}(x)=(x-q^{2n}-q^{2n-1}1_{\{n>0\}})q_n(x)-q^{4n-3}q_{n-1}(x)
\end{equation}
and $C_1(q),C_2(q),C_3(q),\ldots$ is the moment sequence for 
\begin{equation}\label{Cat2}
q_{n+1}(x)=(x-q^{2n}(1+q))q_n(x)-q^{4n-1}q_{n-1}(x).
\end{equation}

\subsection{$\mathbf q$-Double factorials} 
Theorem~\ref{SNF:EO:Theorem} also applies to 
the $q$-Hermite example in \S\ref{q:perfect:matchings} and  
gives Corollary~\ref{O:E:qHermit} below. 
In this case Proposition~\ref{Even:Odd} tells us that 
the $q$-double factorials 
${[2n-1]!!_q=[1]_q[3]_q\ldots [2n-1]_q}$ 
are the moments of the polynomials 
where ${b_n=q^{2n-1}[2n]_q+q^{2n}[2n+1]_q}$ and ${\lambda_n=q^{4n-3}[2n-1]_q[2n]_q}$.
\begin{corollary}\label{O:E:qHermit}
$([2i+2j-1]!!_q)_{0\leq i,j\leq n}$ has SSNF $\diag(1,q^{\binom{2}{2}}[2]!_q,q^{\binom{4}{2}}[4]!_q,\ldots, q^{\binom{2n}{2}}[2n]!_q)$ over $\ZZ[q]$.\qed
\end{corollary}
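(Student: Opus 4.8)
The plan is to read this statement off Theorem~\ref{SNF:EO:Theorem}\eqref{SNF:Even}, applied to the discrete $q$-Hermite polynomials \eqref{res:disc:Hermits} of \S\ref{q:perfect:matchings}. First I would recall that those polynomials have $b=(0,0,\ldots)$ and $\lambda_n=q^{n-1}[n]_q$, and that their moments are $\mu_n=PM_n(q)$ with $\mu_{2n}=[1]_q[3]_q\cdots[2n-1]_q=[2n-1]!!_q$ and $\mu_{2n+1}=0$. So $([2i+2j-1]!!_q)_{0\le i,j\le n}$ is precisely the matrix $(\mu_{2i+2j})_{0\le i,j\le n}$, and Theorem~\ref{SNF:EO:Theorem}\eqref{SNF:Even} already tells us it has SSNF $\diag(s_0,s_2,s_4,\ldots,s_{2n})$ over $\ZZ[\lambda]$, where $s_k=\lambda_1\lambda_2\cdots\lambda_k$. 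Hence the only work remaining is to specialise $\lambda$ and simplify.

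Next I would substitute $\lambda_m=q^{m-1}[m]_q$ and compute
\[
s_{2k}=\prod_{m=1}^{2k}q^{m-1}[m]_q=q^{\sum_{m=1}^{2k}(m-1)}\,[2k]!_q=q^{\binom{2k}{2}}[2k]!_q,
\]
using $\sum_{m=1}^{2k}(m-1)=\binom{2k}{2}$ and $\prod_{m=1}^{2k}[m]_q=[2k]!_q$. This turns $\diag(s_0,s_2,\ldots,s_{2n})$ into exactly $\diag(1,q^{\binom{2}{2}}[2]!_q,q^{\binom{4}{2}}[4]!_q,\ldots,q^{\binom{2n}{2}}[2n]!_q)$, which is the asserted SSNF. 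I would also note that the boundary case is consistent: $i=j=0$ gives $[-1]!!_q=1=\mu_0=s_0$.

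I do not expect any genuine obstacle here, since everything rests on results already proved; the only care needed is routine bookkeeping, namely that the exponents of $q$ in $\prod_{m=1}^{2k}q^{m-1}$ telescope to $\binom{2k}{2}$ and that the empty-product conventions line up at $k=0$. As an alternative route — the one hinted at in the paragraph preceding the statement — I could instead apply Theorem~\ref{Main:Theorem} directly to the even polynomials $\{e_n(x)\}$, which by Proposition~\ref{Even:Odd}\eqref{EO:Trick:Evens} (with $\lambda_m=q^{m-1}[m]_q$ inserted into its recurrence) satisfy the three-term relation with $b_n=q^{2n-1}[2n]_q+q^{2n}[2n+1]_q$ and $\lambda_n=q^{4n-3}[2n-1]_q[2n]_q$; Theorem~\ref{Main:Theorem} then gives SSNF $\diag(1,\lambda_1,\lambda_1\lambda_2,\ldots)$ for their Hankel matrix, and since $\prod_{l=1}^{k}(q^{4l-3}[2l-1]_q[2l]_q)=\prod_{m=1}^{2k}q^{m-1}[m]_q=s_{2k}$ the same simplification yields the same diagonal.
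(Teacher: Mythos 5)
Your proof is correct and follows exactly the paper's route: the paper likewise obtains this corollary by applying Theorem~\ref{SNF:EO:Theorem}\eqref{SNF:Even} to the discrete $q$-Hermite example of \S\ref{q:perfect:matchings} (and notes the same even-polynomial data $b_n=q^{2n-1}[2n]_q+q^{2n}[2n+1]_q$, $\lambda_n=q^{4n-3}[2n-1]_q[2n]_q$ from Proposition~\ref{Even:Odd} that you give as an alternative). Your computation $s_{2k}=q^{\binom{2k}{2}}[2k]!_q$ is the routine verification the paper leaves implicit.
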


\subsection{$\mathbf q$-Factorials} 
Kasraoui--Stanton--Zeng \cite{KaSZ} defined $q$-Laguerre polynomials
\begin{equation}
L_{n+1}(x;q)=(x-y[n+1]_q-[n]_q)L_n(x;q)-y[n]_q^2 L_{n-1}(x;q)
\end{equation}
and showed that 
$\mu_n=W_n(y,q)$ counts permutations 
with respect to the number of weak excedances and crossings:
\begin{equation}
W_n(y,q)=\sum_{\sigma\in S_n} y^{\wex(\sigma)}q^{\permcr(\sigma)}.
\end{equation}
The number of {\it weak excedances of $\sigma$} is defined by 
\begin{equation}
\wex(\sigma)=\#\{i\in [n] : i\leq \sigma(i)\}
\end{equation}
and the {\it number of crossings of $\sigma$} is defined by 
\begin{equation}
\permcr(\sigma)=\sum_{j=1}^n
\#\{j: j<i\leq \sigma(j)<\sigma(i)\}+\sum_{j=1}^n\#\{j:j>i>\sigma(j)>\sigma(i)\}.
\end{equation}

This may be explained by the following diagram, see Figure~\ref{Example:WEX}. 
With $1$ through $n$ arranged in that order on a horizontal line, 
view $\sigma$ graphically by taking each $i$ and drawing an arc 
$i\to\sigma(i)$ above the line if $\sigma(i)>i$ and 
below the line if $\sigma(i)<i$. Then 
$\wex(\sigma)$ is the number of arcs above the line plus 
the number of isolated points, and $\permcr(\sigma)$ 
is the number of proper crossings plus the number 
of points $1,2,\ldots,n$ at which two different upper arcs meet.

\begin{figure}[hbt]\centering
\begin{tikzpicture}[scale=.3]
\foreach \x in {1,...,16}
          \draw [fill] (2*\x,0) circle [radius=.15]  node[below,outer sep=1.5pt]{\scriptsize$\x$};
\foreach \x/\y/\a in {
1/7/0,7/3/0,3/4/0,4/1/0,
2/5/0,5/2/0,
6/9/0,9/16/0,16/15/0,15/14/0,14/8/9,8/13/0,13/6/0,
10/11/0,11/10/0} 
{\ifthenelse{\x < \y}{
\draw [very thick] 
(2 * \x,0) to [out=(65-abs(\y-\x)*2-\a)*((\y-\x)/(abs(\y-\x)),in=(180-(65-abs(\y-\x)*2-\a))*((\y-\x)/(abs(\y-\x)),relative] (2 * \y,0);}{\draw [very thick] 
(2 * \y,0) to [out=(65-abs(\y-\x)*2-\a)*((\y-\x)/(abs(\y-\x)),in=(180-(65-abs(\y-\x)*2-\a))*((\y-\x)/(abs(\y-\x)),relative] (2 * \x,0);}}
\end{tikzpicture}
\caption{ 
$\sigma=\text{\small $(1,7,3,4)(2,5)(6,9,16,15,14,8,13)(10,11)(12)$}$ 
is drawn above and has $\wex(\sigma)$ equal to $8$ and $\permcr(\sigma)$ equal to $9$.
}\label{Example:WEX}
\end{figure}
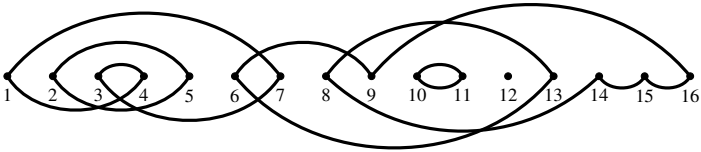

\begin{corollary}%
$(W_{i+j}(y,q))_{0\leq i,j\leq n}$ has SSNF $\diag(1,y^1[1]!_q^2,y^2[2]!_q^2\ldots, y^n[n]!_q^2)$ over $\ZZ[y,q]$. \qed
\end{corollary}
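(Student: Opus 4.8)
\emph{Proof proposal.}
The plan is to read this off from Theorem~\ref{Main:Theorem} by the evident specialization of the parameters. Put $b_n=y[n+1]_q+[n]_q$ and $\lambda_n=y[n]_q^2$, both elements of $\ZZ[y,q]$. With these choices the three-term recurrence \eqref{three-term} is exactly the Kasraoui--Stanton--Zeng recurrence for the $q$-Laguerre polynomials $L_n(x;q)$, so the $n$-th moment $\mu_n=\L(x^n)$, which by \eqref{general:moments} is the weighted Motzkin-path generating function in the variables $b_n,\lambda_n$, coincides with their evaluation $\mu_n=W_n(y,q)=\sum_{\sigma\in S_n}y^{\wex(\sigma)}q^{\permcr(\sigma)}$.

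Next I would invoke Theorem~\ref{Main:Theorem}. Its proof constructs the lower unitriangular matrix $P=(P_{ik})_{0\leq i,k\leq n}$, where $P_{ik}$ is the coefficient of $x^k$ in $p_i(x)$, and establishes $PHP^{t}=\diag(1,\lambda_1,\lambda_1\lambda_2,\ldots,\lambda_1\lambda_2\cdots\lambda_n)$ for $H=(\mu_{i+j})_{0\leq i,j\leq n}$. For the present $b_n,\lambda_n$ every $p_i(x)$ has coefficients in $\ZZ[y,q]$, so $P\in\SL(n+1,\ZZ[y,q])$ and the identity $PHP^{t}=D$ holds over $\ZZ[y,q]$; hence $(W_{i+j}(y,q))_{0\leq i,j\leq n}$ has SSNF $\diag(1,\lambda_1,\lambda_1\lambda_2,\ldots,\lambda_1\lambda_2\cdots\lambda_n)$ over $\ZZ[y,q]$. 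It then remains only to identify the diagonal: since $\lambda_j=y[j]_q^2$, the partial products are
\[
\lambda_1\lambda_2\cdots\lambda_k=y^k\bigl([1]_q[2]_q\cdots[k]_q\bigr)^2=y^k\,[k]!_q^{\,2},
\]
which is the asserted $\diag(1,y^1[1]!_q^2,y^2[2]!_q^2,\ldots,y^n[n]!_q^2)$.

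There is essentially no hard step here: the corollary is a direct specialization of Theorem~\ref{Main:Theorem}, and the only substantive inputs are that the specialized Motzkin-path weights reproduce $W_n(y,q)$ (the cited result of Kasraoui--Stanton--Zeng, via its diagram interpretation in Figure~\ref{Example:WEX}) and the elementary telescoping of $\prod_{j\leq k}\lambda_j$. The one point worth a sentence of care is that $b_n,\lambda_n\in\ZZ[y,q]$, so that the ground ring in Theorem~\ref{Main:Theorem} specializes along the ring homomorphism $\ZZ[b,\lambda]\to\ZZ[y,q]$ without disturbing the unitriangularity of $P$; everything else is automatic.
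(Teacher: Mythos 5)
Your proposal is correct and is exactly the argument the paper intends: the corollary is the specialization $b_n=y[n+1]_q+[n]_q$, $\lambda_n=y[n]_q^2$ of Theorem~\ref{Main:Theorem}, with the moments identified as $W_n(y,q)$ via the Kasraoui--Stanton--Zeng recurrence and the diagonal entries telescoping to $y^k[k]!_q^2$. The paper leaves the proof as an immediate \qed for precisely this reason, so there is nothing to add.
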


\subsection{} Simion and Stanton's  
{\it octabasic Laguerre polynomials} with  
8 independent $q$'s are defined 
in terms of the 3-term recurrence relation 
\eqref{three-term} by setting 
\begin{equation}
b_n=a[n+1]_{r,s}+b[n]_{t,u},\ \  \lambda_n=ab[n]_{p,q}[n]_{v,w},\ \  [n]_{r,s}=(r^n-s^n)/(r-s).
\end{equation}
The moments are generating functions for permutations 
counted according to eight different statistics which specialize 
to many other combinatorial sets and related statistics~\cite{SS}. In particular Simion--Stanton~\cite{SS1} 
gave specializations whose moments 
are basically $[n]!_q$. If we swap the $a$ and $b$ in their second specialization 
\cite[Eq. 3.3]{SS1}, then we get the polynomials where $\mu_n$ is exactly $[n]!_q$:
\begin{equation}
p_{n+1}(x)=(x-q^n[n+1]_q-q^n[n]_q)p_n(x)-q^{2n-1}[n]_q[n]_q p_{n-1}(x).
\end{equation}

\begin{corollary}%
$([i+j]!_q)_{0\leq i,j\leq n}$ has SSNF $\diag(1,q^{1^2}[1]!_q^2, q^{2^2}[2]!_q^2,\ldots,q^{n^2}[n]!_q^2)$ over $\ZZ[q]$. \qed
\end{corollary}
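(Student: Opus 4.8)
The plan is to read this off from Theorem~\ref{Main:Theorem} in exactly the way the other corollaries of \S\ref{Section:Examples} are obtained. The octabasic Laguerre specialization displayed just above the statement has $b_n = q^n[n+1]_q + q^n[n]_q$ and $\lambda_n = q^{2n-1}[n]_q^2$, and by Simion--Stanton~\cite{SS1} its $n$-th moment is $\mu_n = [n]!_q$; so the matrix in question is precisely the Hankel matrix $(\mu_{i+j})_{0\le i,j\le n}$ attached to this choice of $b$ and $\lambda$.

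First I would note that nothing is lost by working over $\ZZ[q]$ rather than $\ZZ[b,\lambda]$. In the proof of Theorem~\ref{Main:Theorem} the coefficient matrix $P=(P_{ik})$ of the monic orthogonal polynomials is lower unitriangular over $\ZZ[b,\lambda]$ and satisfies $PHP^{t}=\diag(1,\lambda_1,\lambda_1\lambda_2,\dots,\lambda_1\cdots\lambda_n)$. Substituting the polynomials in $q$ above for $b_n$ and $\lambda_n$ turns $P$ into a lower unitriangular matrix over $\ZZ[q]$, hence an element of $\SL(n+1,\ZZ[q])$, while $P^{t}\in\SL(n+1,\ZZ[q])$ too, and the identity $PHP^{t}=D$ survives the substitution. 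Therefore $([i+j]!_q)_{0\le i,j\le n}$ has SSNF $\diag(s_0,s_1,\dots,s_n)$ over $\ZZ[q]$, where $s_k=\lambda_1\lambda_2\cdots\lambda_k$.

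It then remains only to simplify $s_k$. With $\lambda_j=q^{2j-1}[j]_q^2$,
\[
s_k=\prod_{j=1}^{k}q^{2j-1}[j]_q^2=q^{\,1+3+\cdots+(2k-1)}\Big(\prod_{j=1}^{k}[j]_q\Big)^{2}=q^{k^2}[k]!_q^{2},
\]
using $1+3+\cdots+(2k-1)=k^2$. This is exactly the $k$-th diagonal entry claimed, so the corollary follows; in particular it refines $\det\,([i+j]!_q)_{0\le i,j\le n}=\prod_{k=0}^{n}q^{k^2}[k]!_q^{2}$.

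The only step with genuine content is the moment evaluation $\mu_n=[n]!_q$; once that is granted, the argument is a one-line specialization of Theorem~\ref{Main:Theorem} together with the elementary sum of the first $k$ odd numbers. A self-contained verification of $\mu_n=[n]!_q$ would go through the Motzkin-path description~\eqref{general:moments}: one checks that the weighting determined by these $b_n,\lambda_n$ matches the permutation statistics enumerated by $[n]!_q$ (equivalently, that the Jacobi continued fraction with these coefficients equals $\sum_{n\ge 0}[n]!_q x^n$), but for the present paper it suffices to cite~\cite{SS1}.
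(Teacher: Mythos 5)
Your proposal is correct and is exactly the paper's (implicit) argument: the corollary is read off from Theorem~\ref{Main:Theorem} by specializing $b_n=q^n[n+1]_q+q^n[n]_q$, $\lambda_n=q^{2n-1}[n]_q^2$, citing Simion--Stanton for $\mu_n=[n]!_q$, and computing $\lambda_1\cdots\lambda_k=q^{k^2}[k]!_q^2$. Nothing further is needed.
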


\section{Bessenrodt--Stanley: general results}\label{BS:Remark}%
\noindent 
Fix a Young diagram $\lambda$. View it in the 
top left part of a square-tiled fourth quadrant. 
Write $(i,j)$ for the square in the $i$-th row and $j$-th column of the tiling. 
Let $d$ be 
the length of the diagonal of $\lambda$, meaning  
$(d,d)\in \lambda$ and $(d+1,d+1)\not\in\lambda$.
Write 
\[\lambda(i,j)=\{(u,v)\in\lambda : i\leq u\text{ and }   j\leq v\}.\] 
Associate to each square $s\in \lambda$ an indeterminate 
$x_s$ and denote by $A_{ij}$ the generating function 
for the skew shapes $\lambda(i,j)\setminus \mu$ so that 
\begin{align}
A_{ij}&\textstyle =
\sum_{\mu\subset\lambda(i,j)}\prod_{s\in\lambda(i,j)\setminus \mu} x_s.\label{A:i:j}
\intertext{The full Bessenrodt--Stanley result \cite[\mbox{Theorem 1}]{BS} 
is Corollary~\ref{SB:Theorem} below for a SSNF of the matrix}
\textstyle A(\lambda)&\textstyle =(A_{ij})_{1\leq i,j\leq d+1}.\label{A:Lambda}
\end{align}
\begin{figure}[hbt]\centering
\begin{tikzpicture}[scale=.3]
\draw [dotted]
(1,13)--(4,13)
(1,12)--(4,12)
(4,14)--(4,13)
(5,14)--(5,13)
(8,14)--(8,13)
(7,14)--(7,13)
(1,8)--(4,8)
(1,7)--(4,7)
;
\node [outer sep=.3pt] at (0,12.5) {$i$};
\node [outer sep=.3pt] at (4.5,15) {$j$};
\node [outer sep=.3pt] at (7.5,15) {$k$};
\node [outer sep=.3pt] at (0,7.5) {$k$};
\draw [dotted] (0+1,15-1) -- (7,8);
\draw 
(1,1) --
(3,1) -- 
(3,3) --
(4,3) -- 
(6,3) --
(6,4) --
(8,4) --
(7+1,6) --
(9+1,6) --
(12,6)--
(12,7) --
(12+1,7) --
(12+1,9) --
(13+1,9) --
(13+1,11) --
(15+1,11) -- 
(15+1,13)--
(15+1+3,13)--
(15+1+3,14)--
(1,14)--cycle;
\draw [ultra thick] 
(4,3) -- (6,3) --
(6,4) --
(8,4) --
(7+1,6) --
(9+1,6) --
(12,6)--
(12,7) --
(12+1,7) --
(12+1,9) --
(13+1,9) --
(13+1,11) --
(15+1,11) -- (15+1,13) -- (4,13) --  cycle;
\draw (7,4) -- (7,13);
\draw (4,8) -- (7+6,8) ;
\fill[pattern=north west lines] (4+.2,3+.2) -- (6-.2,3+.2) -- (6-.2,4+.2) --(7-.2,4+.2) -- (7-.2,8-.2) --(4+.2,8-.2) --(4+.2,3+.2);
\fill[pattern=north west lines] (7+.2,4+.2)--(8-.2,4+.2)--(8-.2,6+.2)--
(12-.2,6+.2)--(12-.2,7+.2)--(13-.2,7+.2)--(13-.2,8-.2)--(7+.2,8-.2)--(7+.2,4+.2);
\fill[pattern=north west lines] 
(7+.2,8+.2)--
(13-.2,8+.2)--
(12+1-.2,9+.2) --
(13+1-.2,9+.2) --
(13+1-.2,11+.2) --
(15+1-.2,11+.2) -- 
(15+1-.2,13-.2)--
(7+.2,13-.2)--(7+.2,8+.2);
\node[fill=white] at (10.5,10.5) {\raisebox{0pt}[.8\height][.5\depth]{\makebox[.8\width]{\footnotesize${U(i,k)}$}}};
\node[fill=white] at (5.5,7) {\raisebox{-.5pt}[.6\height][.0\depth]{\makebox[.7\width]{\footnotesize${L(k,j)}$}}};
\node [fill=white] at (10.5,7) {\raisebox{-.5pt}[.6\height][.0\depth]{\makebox[.7\width]{\footnotesize${\lambda(k,k)}$}}};
\end{tikzpicture}%
\caption{$\lambda(i,j)$ in bold.}%
\label{par:picture}
\end{figure}
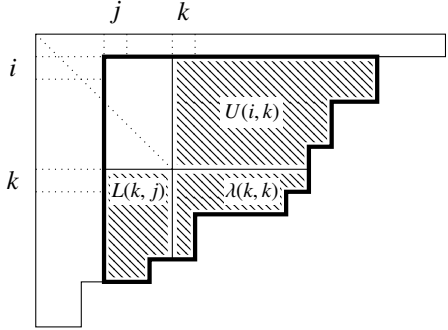
\begin{theorem}\label{SB:LDU}
$A(\lambda)=UDL$ where
\begin{enumerate}[\rm(i)]
\item $U=(U_{ik})_{1\leq i,k\leq d+1}$ is the upper 
triangular matrix 
such that $U_{ik}$ is the generating function 
for skew shapes of $U(i,k)=\{(u,v)\in\lambda\,:\, i\leq u<k\leq v\}$ 
when $i\leq k$,
\item $D=\diag(D_{11},D_{22},\ldots,D_{d+1,d+1})$ 
is the diagonal matrix where 
$D_{kk}=\prod_{s\in\lambda(k,k)}x_s$,
\item $L=(L_{kj})_{1\leq k,j\leq d+1}$ 
is the lower triangular matrix 
such that $L_{kj}$ is the generating function for  
skew shapes of $L(k,j)=\{(u,v)\in\lambda\,:\, j\leq v<k\leq u\}$ 
when $j\leq k$,
\end{enumerate}
so that in particular $L$ and $U$ are lower and upper unitriangular.
\end{theorem}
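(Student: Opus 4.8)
The plan is to prove $A(\lambda)=UDL$ entrywise, i.e.\ to establish
$A_{ij}=\sum_{k}U_{ik}D_{kk}L_{kj}$ (the sum effectively running over $\max(i,j)\le k\le d+1$, since $U_{ik}=0$ for $k<i$ and $L_{kj}=0$ for $k<j$), by a weight--preserving bijection. The first step is to reinterpret every generating function in \eqref{A:i:j} and in Theorem~\ref{SB:LDU} poset-theoretically. A subdiagram $\mu\subseteq\lambda(i,j)$ is exactly an order ideal of the cell poset of $\lambda(i,j)$ under $(a,b)\le(c,d)\iff a\le c,\ b\le d$, so $\lambda(i,j)\setminus\mu$ ranges over the order filters $\nu$ of $\lambda(i,j)$ and $A_{ij}=\sum_{\nu}\prod_{s\in\nu}x_s$; likewise $U_{ik}=\sum_{\alpha}\prod_{s\in\alpha}x_s$ over filters $\alpha$ of $U(i,k)$ and $L_{kj}=\sum_{\beta}\prod_{s\in\beta}x_s$ over filters $\beta$ of $L(k,j)$, while $D_{kk}$ is the single monomial $\prod_{s\in\lambda(k,k)}x_s$, with $D_{d+1,d+1}=1$ and $U_{ii}=L_{ii}=1$ because $U(i,i)=L(i,i)=\emptyset$ (so $U,L$ are automatically unitriangular). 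Hence $\sum_{k}U_{ik}D_{kk}L_{kj}=\sum_{k}\sum_{\alpha,\beta}\prod_{s\in\alpha\sqcup\lambda(k,k)\sqcup\beta}x_s$, and the theorem reduces to the combinatorial claim that the filters $\nu$ of $\lambda(i,j)$ are in weight-preserving bijection with triples $(k,\alpha,\beta)$, $\max(i,j)\le k\le d+1$, $\alpha$ a filter of $U(i,k)$, $\beta$ a filter of $L(k,j)$, via $\nu=\alpha\sqcup\lambda(k,k)\sqcup\beta$.

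For the geometry, fix $i,j$ and $\max(i,j)\le k\le d+1$ and set $C_k=\{(u,v)\in\lambda:\ i\le u<k,\ j\le v<k\}$ for the ``northwest corner''. A direct check on the four possible ranges of $(u,v)$ gives the disjoint decomposition $\lambda(i,j)=U(i,k)\sqcup\lambda(k,k)\sqcup L(k,j)\sqcup C_k$. The gluing map $(k,\alpha,\beta)\mapsto\alpha\sqcup\lambda(k,k)\sqcup\beta$ always produces a filter of $\lambda(i,j)$: going up from a cell of $\lambda(k,k)$ one stays in $\lambda(k,k)$; going up from a cell of $\alpha\subseteq U(i,k)$ one either stays in $U(i,k)$, hence in $\alpha$ since $\alpha$ is a filter there, or crosses into $\lambda(k,k)$; and symmetrically for $\beta$. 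For the inverse, given a filter $\nu$ I would let $k=k(\nu)$ be the least $m$ with $(m,m)\in\nu$ (necessarily $\max(i,j)\le m\le d$, since such an $(m,m)\in\lambda$ forces $m\le d$) or $k=d+1$ if $\nu$ contains no diagonal cell, and put $\alpha=\nu\cap U(i,k)$, $\beta=\nu\cap L(k,j)$. Since $\nu$ is a filter, $(k,k)\in\nu$ already forces $\lambda(k,k)\subseteq\nu$; the substance of the inverse is that $\nu\cap C_k=\emptyset$, which together with the decomposition above recovers $\nu=\alpha\sqcup\lambda(k,k)\sqcup\beta$.

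The main obstacle is exactly the verification that $\nu\cap C_k=\emptyset$, and this is the only step where the shape of $\lambda$ really matters. Suppose $(u,v)\in\nu\cap C_k$. From $u\ge i$, $v\ge j$, and the fact that one of $i,j$ equals $\max(i,j)$, one gets $\max(u,v)\ge\max(i,j)$; also $\max(u,v)\le k-1\le d$. Because $\lambda$ is a Young diagram its rows weakly decrease, so $(d,d)\in\lambda$ forces $(m,m)\in\lambda$ for all $m\le d$; in particular $(\max(u,v),\max(u,v))\in\lambda$, and it lies in $\lambda(i,j)$. As $(u,v)\le(\max(u,v),\max(u,v))$ and $\nu$ is a filter, this diagonal cell lies in $\nu$ — but its index is $<k$, contradicting the minimality in the definition of $k(\nu)$ (or, when $k=d+1$, contradicting that $\nu$ has no diagonal cell). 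Hence $\nu\cap C_k=\emptyset$. The remaining bookkeeping is routine: $\nu\cap U(i,k)$ and $\nu\cap L(k,j)$ are filters of those regions (intersecting a filter with a subset and going up stays inside), and conversely in $\alpha\sqcup\lambda(k,k)\sqcup\beta$ every diagonal cell comes from $\lambda(k,k)$ — cells of $U(i,k)$ satisfy $u<v$ and cells of $L(k,j)$ satisfy $v<u$ — so the gluing and splitting maps are mutually inverse, and weights multiply by disjointness of the three pieces. Summing over all filters of $\lambda(i,j)$ then yields $A_{ij}=(UDL)_{ij}$, and the parenthetical remark about $L,U$ being unitriangular has already been noted.
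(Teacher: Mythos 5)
Your proof is correct and follows the same route as the paper, whose entire proof is the one-line identity $A_{ij}=\sum_{k}U_{ik}D_{kk}L_{kj}$ ``as illustrated by'' Figure~\ref{par:picture}. You have simply supplied the details that the figure is meant to convey — in particular the choice of $k$ as the minimal diagonal index of the skew shape and the verification that the skew shape then misses the corner region $C_k$, which is exactly the point the paper leaves implicit.
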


\begin{proof}
Write $A_{ij}=\sum_{k=1}^{d+1} U_{ik} D_{kk} L_{kj}$ as illustrated by Figure~\ref{par:picture}.
\end{proof}

\begin{corollary}[{{\cite[Thm.~1]{BS}}}]\label{SB:Theorem}
There are upper and lower unitriangular $P$ and $Q$ over 
$\ZZ[x]=\ZZ[x_s:s\in\lambda]$ such that 
\begin{equation}\label{BS:result}
PA(\lambda)Q={\rm diag}(D_{11},D_{22},\ldots,D_{d+1,d+1}),\qquad 
D_{kk}=\textstyle{\prod_{s\in\lambda(k,k)}}x_s.
\end{equation}
In particular, the matrix $A(\lambda)$ has SSNF ${\rm diag}(1,D_{dd},D_{d-1,d-1},\ldots,D_{11})$ over~$\ZZ[x]$.
\begin{proof}
Theorem~\ref{SB:LDU} implies~\eqref{BS:result} 
for $P=U^{-1}$ and $Q=L^{-1}$. But the inverse of an upper (resp. lower)
unitriangular matrix is again upper (resp. lower) unitriangular. 
For the SSNF, 
let $D={\rm diag}(D_{11},D_{22},\ldots,D_{d+1,d+1})$, 
$D'={\rm diag}(1,D_{dd},D_{d-1,d-1},\ldots,D_{11})$, 
and let $X$ be the permutation matrix such that $XDX^{-1}=D'$. 
If $\det{X}=-1$, then 
put $Y={\rm diag}(-1,1,1,\ldots,1)$ so that $\det(YX)=\det(X^{-1}Y)=1$ and 
$YXDX^{-1}Y=D'$.
\end{proof}
\end{corollary}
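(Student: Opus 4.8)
The plan is to separate the statement into two layers. The substantive layer is the factorization $A(\lambda)=UDL$ from Theorem~\ref{SB:LDU}; granting it, \eqref{BS:result} is immediate. Indeed $U$ is upper unitriangular and $L$ is lower unitriangular over $\ZZ[x]$, hence so are their inverses, and with $P=U^{-1}$ (upper unitriangular) and $Q=L^{-1}$ (lower unitriangular) one gets $PA(\lambda)Q=U^{-1}(UDL)L^{-1}=D=\diag(D_{11},\ldots,D_{d+1,d+1})$, which is exactly \eqref{BS:result}. Since a unitriangular matrix has determinant $1$, $P$ and $Q$ already lie in $\SL(d+1,\ZZ[x])$, so this is even an $\SL$-equivalence of $A(\lambda)$ with a diagonal matrix; the only reason $D$ is not yet a Smith normal form is that its diagonal is ordered the wrong way for the divisibility condition \eqref{SNF:d}.

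To reach the SSNF I would first record the two order-theoretic facts that make the reversed diagonal legitimate. From $(d+1,d+1)\notin\lambda$ we get $\lambda(d+1,d+1)=\emptyset$, so $D_{d+1,d+1}=1$; and $\lambda(k+1,k+1)\subseteq\lambda(k,k)$ for every $k$, so $D_{k+1,k+1}$ divides $D_{kk}$ in $\ZZ[x]$. Hence $1=D_{d+1,d+1}\mid D_{dd}\mid\cdots\mid D_{11}$, so $D'=\diag(1,D_{dd},D_{d-1,d-1},\ldots,D_{11})$ satisfies \eqref{SNF:d}. Now $D'=XDX^{-1}$ where $X$ is the permutation matrix reversing the diagonal, whence $XPA(\lambda)QX^{-1}=D'$; to meet the determinant requirement \eqref{SNF:D:p} I would note $\det X=\pm1$, and if $\det X=-1$ replace $X,X^{-1}$ by $YX,X^{-1}Y$ with $Y=\diag(-1,1,\ldots,1)$: conjugating the diagonal matrix $XDX^{-1}$ by the diagonal matrix $Y$ leaves it fixed, so $YXDX^{-1}Y=D'$ still, while $\det(YXP)=\det(QX^{-1}Y)=1$. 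This finishes the corollary; the bookkeeping is routine, and the one place that needs care is exactly this $\SL$-normalization.

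If instead one wants a proof that avoids Theorem~\ref{SB:LDU}, the main obstacle --- and the only real content --- is the combinatorial identity $A_{ij}=\sum_{k=1}^{d+1}U_{ik}D_{kk}L_{kj}$. The plan there is a bijective decomposition of skew shapes: given a Young subdiagram $\mu\subseteq\lambda(i,j)$, let $k$ be the least $t$ with $\max(i,j)\le t$ and $(t,t)\notin\mu$, taking $k=d+1$ if there is no such $t$. Since $\mu$ is a Young diagram, $(k,k)\notin\mu$ forces every cell of $\lambda$ weakly southeast of $(k,k)$ to be absent from $\mu$; that is, all of $\lambda(k,k)$ survives in $\lambda(i,j)\setminus\mu$ and contributes the monomial $D_{kk}=\prod_{s\in\lambda(k,k)}x_s$. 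The surviving cells in rows $<k$ then form a skew shape of the strip $U(i,k)$, those in columns $<k$ a skew shape of $L(k,j)$, and --- this is the fact to check --- the cells lying in rows and columns both $<k$ are automatically absent from $\lambda(i,j)\setminus\mu$, so the two smaller skew shapes may be chosen independently. Verifying that $\mu\mapsto(k,\text{skew shape of }U(i,k),\text{skew shape of }L(k,j))$ is a bijection, and that for each $k$ the three weights multiply to $U_{ik}D_{kk}L_{kj}$, is the step I expect to cost the most care; this is what Figure~\ref{par:picture} encodes, and once it is in place, summing over $\mu$ gives $A(\lambda)=UDL$ and one concludes as above.
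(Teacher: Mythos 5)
Your proof is correct and follows essentially the same route as the paper: invert the unitriangular factors of the $UDL$ decomposition from Theorem~\ref{SB:LDU}, then reverse the diagonal by a permutation matrix and repair the sign with $Y=\diag(-1,1,\ldots,1)$. The extra details you supply --- the divisibility chain $1=D_{d+1,d+1}\mid D_{dd}\mid\cdots\mid D_{11}$ needed for condition~\eqref{SNF:d}, and the cell-by-cell decomposition behind $A_{ij}=\sum_k U_{ik}D_{kk}L_{kj}$ --- are exactly what the paper leaves implicit or delegates to Figure~\ref{par:picture}.
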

\noindent 
Bessenrodt--Stanley's two $q$-Catalan results (Corollary~\ref{Catalan:SNF} above)  
are the two cases of Corollary \ref{SB:Theorem} where 
$x_s=q$ and $\lambda=(2n-1,2n-2,\ldots,1),(2n,2n-1,\ldots,1)$.
\noindent
\begin{remark}
Bessenrodt--Stanley gave two more theorems in \cite{BS}. 
Their second theorem is essentially an inclusion-exclusion 
lemma used to recursively construct the $P$ and $Q$ in Corollary~\ref{SB:Theorem}. 
But the nature of the factorization $PAQ=D$ 
implies 
$P=U^{-1}$ and  $Q=L^{-1}$ so we 
can give their $P$ and $Q$ directly 
thanks to unitriangularity. 
The third theorem extends the first to 
some rectangular matrices \cite[Thm. 3]{BS}.  
That theorem can be obtained by specializing to $0$ some variables 
in the first theorem applied to a suitable shape. 
The specialization leads to a more general statement 
(Corollary~\ref{BS:General} below). 
Our direct method works in this case also.
\end{remark}

Let $(a,b)\not\in\lambda$
be a square in the border strip that 
runs from the end of the first column of $\lambda$ 
to the end of the first row of $\lambda$, shown as the shaded region in Figure~\ref{Border:Picture}. 

\begin{figure}[hbt]\centering
\begin{tikzpicture}[scale=.25]
\draw [dotted]
(2,9)--(14,9)
(2,8)--(14,8)
(2,7)--(14,7)
(2,6)--(14,6)
(2,5)--(14,5)
(2,4)--(14,4)
(2,3)--(14,3)
(2,2)--(14,2)
(2,1)--(14,1)

(3,0)--(3,10)
(4,0)--(4,10)
(5,0)--(5,10)
(6,0)--(6,10)
(7,0)--(7,10)
(8,0)--(8,10)
(9,0)--(9,10)
(10,0)--(10,10)
(11,0)--(11,10)
(12,0)--(12,10)
(13,0)--(13,10);

\draw [thick,cap=round] 
(2,10)--(14,10) 
(2,9)--(12,9) 
(2,8)--(11,8) 
(2,7)--(11,7) 
(2,6)--(9,6) 
(2,5)--(6,5) 
(2,4)--(5,4) 
(2,3)--(5,3) 
(2,2)--(4,2)

(2,0)--(2,10) 
(3,2)--(3,10)
(4,2)--(4,10)
(5,3)--(5,10)
(6,5)--(6,10)
(7,6)--(7,10)
(8,6)--(8,10)
(9,6)--(9,10)
(10,7)--(10,10)
(11,7)--(11,10)
(12,9)--(12,10);
\draw [ultra thick,cap=round]
(2,2)--(4,2)--(4,3)--(5,3)--(5,5)
--(6,5)--(6,6)--(9,6)--(9,7)--(11,7)--(11,9)--
(12,9)--(12,10)--(2,10)--cycle
;
\draw [thick,pattern=north west lines,cap=round]
(2,1)--(2,2)--(4,2)--(4,3)--(5,3)--(5,5)
--(6,5)--(6,6)--(9,6)--(9,7)--(11,7)--(11,9)--
(12,9)--(12,10)--
(13,10)--(13,8)--(12,8)--(12,6)--(10,6)--(10,5)--(7,5)
--(7,4)--(6,4)--(6,2)--(5,2)--(5,1)--cycle
;

\end{tikzpicture}
\vspace{-0.1in}
\caption{$\lambda$ in bold.}%
\label{Border:Picture}
\end{figure}
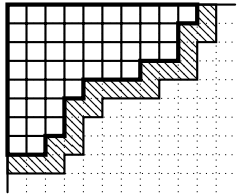

\noindent
Let $\rho$ be the $a\times b$ 
rectangle shape with lower right square $(a,b)$. 
Let $A_{ij}$ be the generating function 
for the skew shapes $\lambda(i,j)\setminus \mu$. 
Write
\begin{equation}
A(\lambda,\rho)=(A_{ij})_{{1\leq i\leq a,\, 1\leq j\leq b}}.
\end{equation} 
Put $c=\min(a,b)$ and define 
\begin{alignat}{2}
U_\rho(i,k)&=\{(u,v)\in\lambda : 
i\leq u<k\leq v+a-b\}\quad &&(1\leq i\leq k\leq a)\\
L_\rho(l,j)&=\{(u,v)\in\lambda : 
j\leq v<l\leq u+b-a\}\quad &&(1\leq j\leq l\leq b)\\
d_i&={\textstyle \prod_{s\in\lambda(a-i+1,b-i+1)}}x_s\quad &&(1\leq i\leq c).
\end{alignat}

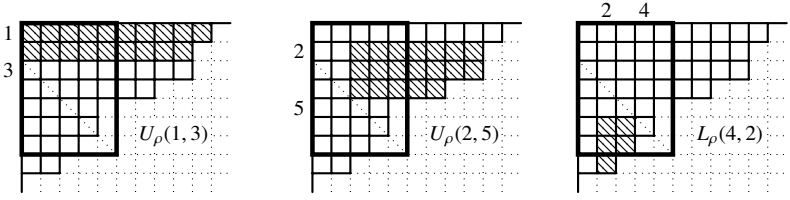
\begin{figure}[hbt]\centering
\hfill
\begin{tikzpicture}[scale=.25]
\draw [dotted]
(2,9)--(13,9)
(2,8)--(13,8)
(2,7)--(13,7)
(2,6)--(13,6)
(2,5)--(13,5)
(2,4)--(13,4)
(2,3)--(13,3)
(2,2)--(13,2)

(3,1)--(3,10)
(4,1)--(4,10)
(5,1)--(5,10)
(6,1)--(6,10)
(7,1)--(7,10)
(8,1)--(8,10)
(9,1)--(9,10)
(10,1)--(10,10)
(11,1)--(11,10)
(12,1)--(12,10);

\draw [dotted] 
(6+1,4-1)--(2,8)
;

\draw [thick,cap=round] 
(2,10)--(13,10) 
(2,9)--(12,9) 
(2,8)--(11,8) 
(2,7)--(11,7) 
(2,6)--(9,6) 
(2,5)--(6,5) 
(2,4)--(6,4) 
(2,3)--(5,3) 
(2,2)--(4,2)

(2,1)--(2,10) 
(3,2)--(3,10)
(4,2)--(4,10)
(5,3)--(5,10)
(6,4)--(6,10)
(7,6)--(7,10)
(8,6)--(8,10)
(9,6)--(9,10)
(10,7)--(10,10)
(11,7)--(11,10)
(12,9)--(12,10);
\draw [line width=0.6mm,cap=round]
(2,3)--(2,10)--(7,10)--(7,3)--cycle;
\node[fill=white] at (10,4) {\raisebox{0pt}[.8\height][.5\depth]{\makebox[.8\width]{\footnotesize$U_\rho(1,3)$}}};
\fill[pattern=north west lines] 
(2,10)--(12,10)--(12,9)--(11,9)--(11,8)--(2,8)--cycle;
\node at (1.3,9.5) {\footnotesize$1$};
\node at (1.3,7.5) {\footnotesize$3$};
\end{tikzpicture}
\hfill
\begin{tikzpicture}[scale=.25]
\draw [dotted]
(2,9)--(13,9)
(2,8)--(13,8)
(2,7)--(13,7)
(2,6)--(13,6)
(2,5)--(13,5)
(2,4)--(13,4)
(2,3)--(13,3)
(2,2)--(13,2)

(3,1)--(3,10)
(4,1)--(4,10)
(5,1)--(5,10)
(6,1)--(6,10)
(7,1)--(7,10)
(8,1)--(8,10)
(9,1)--(9,10)
(10,1)--(10,10)
(11,1)--(11,10)
(12,1)--(12,10);

\draw [dotted] 
(6+1,4-1)--(2,8)
;

\draw [thick,cap=round] 
(2,10)--(13,10) 
(2,9)--(12,9) 
(2,8)--(11,8) 
(2,7)--(11,7) 
(2,6)--(9,6) 
(2,5)--(6,5) 
(2,4)--(6,4) 
(2,3)--(5,3) 
(2,2)--(4,2)

(2,1)--(2,10) 
(3,2)--(3,10)
(4,2)--(4,10)
(5,3)--(5,10)
(6,4)--(6,10)
(7,6)--(7,10)
(8,6)--(8,10)
(9,6)--(9,10)
(10,7)--(10,10)
(11,7)--(11,10)
(12,9)--(12,10);
\draw [line width=0.6mm,cap=round]
(2,3)--(2,10)--(7,10)--(7,3)--cycle;
\node[fill=white] at (10,4) {\raisebox{0pt}[.8\height][.5\depth]{\makebox[.8\width]{\footnotesize$U_\rho(2,5)$}}};
\fill[pattern=north west lines] 
(4,9)--(11,9)--(11,7)--(9,7)--(9,6)--(4,6)--cycle;
\node at (1.3,8.5) {\footnotesize$2$};
\node at (1.3,5.5) {\footnotesize$5$};
\end{tikzpicture}
\hfill
\begin{tikzpicture}[scale=.25]
\draw [dotted]
(2,9)--(13,9)
(2,8)--(13,8)
(2,7)--(13,7)
(2,6)--(13,6)
(2,5)--(13,5)
(2,4)--(13,4)
(2,3)--(13,3)
(2,2)--(13,2)

(3,1)--(3,10)
(4,1)--(4,10)
(5,1)--(5,10)
(6,1)--(6,10)
(7,1)--(7,10)
(8,1)--(8,10)
(9,1)--(9,10)
(10,1)--(10,10)
(11,1)--(11,10)
(12,1)--(12,10);

\draw [dotted] 
(6+1,4-1)--(2,8)
;

\draw [thick,cap=round] 
(2,10)--(13,10) 
(2,9)--(12,9) 
(2,8)--(11,8) 
(2,7)--(11,7) 
(2,6)--(9,6) 
(2,5)--(6,5) 
(2,4)--(6,4) 
(2,3)--(5,3) 
(2,2)--(4,2)

(2,1)--(2,10) 
(3,2)--(3,10)
(4,2)--(4,10)
(5,3)--(5,10)
(6,4)--(6,10)
(7,6)--(7,10)
(8,6)--(8,10)
(9,6)--(9,10)
(10,7)--(10,10)
(11,7)--(11,10)
(12,9)--(12,10);
\draw [line width=0.6mm,cap=round]
(2,3)--(2,10)--(7,10)--(7,3)--cycle;
\node[fill=white] at (10,4) {\raisebox{0pt}[.8\height][.5\depth]{\makebox[.8\width]{\footnotesize$L_\rho(4,2)$}}};
\fill[pattern=north west lines] 
(0+3,0+2)--(0+3,3+2)--(2+3,3+2)--(2+3,1+2)--(1+3,1+2)--(1+3,0+2)--cycle;
\node at (3.5,10.7) {\footnotesize$2$};
\node at (5.5,10.7) {\footnotesize$4$};
\end{tikzpicture}\hfill
\vspace{-0.1in}
\caption{$\rho$ in bold.}%
\label{shift:U:picture}
\end{figure}

\noindent
Theorem~\ref{SB:LDU} is a special case of the following result. 
It is the case 
where $\rho$ is square-shaped of size $(d+1)\times(d+1)$.
\begin{theorem}\label{Gen:SB:LDU} 
$A(\lambda,\rho)=UDL$ where
\begin{enumerate}[\rm(i)]
\item $U=(U_{ik})_{1\leq i,k\leq a}$ is the upper unitriangular matrix given by 
\[U_{ik}=\begin{cases}
\text{the generating function for skew shapes in $U_\rho(i,k)$}
& 
\text{if $i\leq k$,}\\
0 & \text{otherwise,}\end{cases}\]
\item 
$D=(D_{kl})_{1\leq k\leq a,\, 1\leq l\leq b}$ is the matrix given by  
\[D_{kl}=\begin{cases}
d_i & 
\text{if $(k,l)=(a-i+1,b-i+1)$,}\\
0 & \text{otherwise,}
\end{cases}\]
\item 
$L=(L_{lj})_{1\leq l,j\leq b}$ is the lower unitriangular matrix given by 
\[L_{lj}=\begin{cases}
\text{the generating function for skew shapes in $L_\rho(l,j)$}
& 
\text{if $j\leq l$,}\\
0 & \text{otherwise.}\end{cases}\]
\end{enumerate}
\end{theorem}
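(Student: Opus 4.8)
The plan is to prove the factorization $A(\lambda,\rho)=UDL$ entry by entry, following verbatim the argument behind Theorem~\ref{SB:LDU}; the one genuinely new feature is that the main diagonal of $\lambda$ is replaced by the diagonal of cells $(a-m+1,b-m+1)$, $1\le m\le c$, of the rectangle $\rho$. First I would record the routine structural facts: $U_\rho(i,i)$ and $L_\rho(j,j)$ are empty, so $U_{ii}=L_{jj}=1$ and $U,L$ are unitriangular, while by definition $D_{kl}$ vanishes off the cells $(a-m+1,b-m+1)$ and equals $d_m$ at the cell $(a-m+1,b-m+1)$. Consequently, for $1\le i\le a$ and $1\le j\le b$,
\[
(UDL)_{ij}=\sum_{k=1}^{a}\sum_{l=1}^{b}U_{ik}D_{kl}L_{lj}=\sum_{m=1}^{c}U_{i,a-m+1}\,d_m\,L_{b-m+1,j},
\]
and the theorem reduces to recognizing this sum as $A_{ij}=\sum_{\mu\subseteq\lambda(i,j)}\prod_{s\in\lambda(i,j)\setminus\mu}x_s$.

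The heart of the proof is a decomposition of the set of subdiagrams, region by region as in Figure~\ref{shift:U:picture}. To a Young subdiagram $\mu\subseteq\lambda(i,j)$ I would attach the index $m$ at which the boundary of $\mu$ crosses the diagonal of $\rho$ --- concretely, the largest $m\in\{1,\dots,c\}$ for which the region $\lambda(a-m+1,b-m+1)$ is disjoint from $\mu$, which is well defined because $(a,b)\notin\lambda$ forces $m=1$ to qualify. For that $m$ the complement $\lambda(i,j)\setminus\mu$ contains all of $\lambda(a-m+1,b-m+1)$, while the ``northwest block'' of $\lambda(i,j)$, namely the cells of $\lambda$ in rows $[i,a-m]$ and columns $[j,b-m]$, is forced into $\mu$ by maximality of $m$. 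The key geometric point --- the one place where the asymmetric shift by $a-b$ in the definitions of $U_\rho$ and $L_\rho$ is needed --- is that $U_\rho(i,a-m+1)$, $\lambda(a-m+1,b-m+1)$, $L_\rho(b-m+1,j)$, and the northwest block partition $\lambda(i,j)$. Hence $\lambda(i,j)\setminus\mu$ is the disjoint union of $\lambda(a-m+1,b-m+1)$, a skew shape of $U_\rho(i,a-m+1)$, and a skew shape of $L_\rho(b-m+1,j)$; and conversely, from $m$ together with a skew shape of each of those two regions one reassembles a unique $\mu\subseteq\lambda(i,j)$ with crossing index $m$. Since the two side pieces meet $\lambda(i,j)$ in disjoint sets of rows and of columns, no compatibility between them is required, and the row-length inequalities that certify $\mu$ is again a Young diagram are verified exactly as for $a=b$. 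As the weight $\prod_{s\in\lambda(i,j)\setminus\mu}x_s$ then factors as the weight of the $U_\rho$-piece times $d_m$ times the weight of the $L_\rho$-piece, summing over $\mu$ with fixed crossing index gives $U_{i,a-m+1}\,d_m\,L_{b-m+1,j}$, and summing over $m$ gives $A_{ij}$, as wanted.

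I expect the main obstacle to be bookkeeping rather than ideas: making the crossing index $m$ precise and checking that the four-way split is a genuine bijection in the degenerate cases --- when the core cell $(a-m+1,b-m+1)$ lies outside $\lambda$ or outside $\lambda(i,j)$, so that one or more of the four regions is empty --- while ruling out any double counting across levels of $m$. As in the square case this rests on the conventions that an empty region has generating function $1$ and that the unitriangularity of $U$, $L$ together with the support of $D$ kills the terms that should not occur; the genuinely new verification is that, for a rectangle $\rho$ anchored anywhere along the border strip, the shift $a-b$ is exactly the one making $U_\rho$, the core, $L_\rho$, and the northwest block tile $\lambda(i,j)$.
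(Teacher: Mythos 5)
Your overall strategy is exactly the paper's: the published proof of Theorem~\ref{Gen:SB:LDU} is the single line ``write $A_{ij}=\sum_{k,l}U_{ik}D_{kl}L_{lj}$ as in the proof of Theorem~\ref{SB:LDU}'', i.e.\ a decomposition of each skew shape $\lambda(i,j)\setminus\mu$ into a $U_\rho$-piece, a diagonal core, and an $L_\rho$-piece, read off a picture. So the architecture of your argument is the intended one, and your identification of the shift by $a-b$ as the one genuinely new geometric ingredient is exactly right.

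However, the crossing index as you define it is incorrect, and the two claims you build on it fail for that definition. You take $m$ to be the largest $m\in\{1,\dots,c\}$ with $\lambda(a-m+1,b-m+1)\cap\mu=\emptyset$; this ignores $(i,j)$. Take $\mu=\emptyset$ with $(i,j)\neq(1,1)$ --- already in the square case, say $\lambda=(2,1)$, $a=b=2$, $i=j=2$: every $m$ qualifies, so your index is $m=c$, giving $k=a-c+1=1<i$. Then $\lambda(a-m+1,b-m+1)=\lambda(1,1)=\lambda\not\subseteq\lambda(i,j)$, so the asserted containment $\lambda(i,j)\setminus\mu\supseteq\lambda(a-m+1,b-m+1)$ is false, the four regions do not partition $\lambda(i,j)$, and the matrix entry $U_{i,a-m+1}$ is $0$, so this $\mu$ is matched with a vanishing term and its weight is lost from the sum. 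The repair is local: define $m$ as the largest element of $\{1,\dots,\min(a-i+1,b-j+1)\}$ with $\lambda(a-m+1,b-m+1)\cap\mu=\emptyset$; the cap guarantees $a-m+1\geq i$ and $b-m+1\geq j$, hence $\lambda(a-m+1,b-m+1)\subseteq\lambda(i,j)$ and that $U_{i,a-m+1}$ and $L_{b-m+1,j}$ are genuine generating functions, while $m=1$ still qualifies because $\lambda(a,b)=\emptyset$. With that change, maximality of $m$ really does force the northwest block (rows $[i,a-m]$, columns $[j,b-m]$) into $\mu$, the four-way partition of $\lambda(i,j)$ holds, and the rest of your bijection and weight factorization go through as you describe.
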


\begin{proof}
Write $A_{ij}=\sum_{k=1}^a\sum_{l=1}^b U_{ik}D_{kl}L_{lj}$ as in the proof of 
Theorem~\ref{SB:LDU}.
\end{proof}

\begin{corollary}\label{BS:General}
$A(\lambda,\rho)$ has SSNF $\diag_{a\times b}(1,d_{c-1},d_{c-2},\ldots,d_1,0,\ldots,0)$ over $\ZZ[x]$\linebreak 
(the $a\times b$ matrix $D$ with $D_{11},\ldots, D_{cc}$ as given and $0$'s elsewhere.)\qed
\end{corollary}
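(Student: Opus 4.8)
The plan is to rerun the proof of Corollary~\ref{SB:Theorem}, now with Theorem~\ref{Gen:SB:LDU} playing the role of Theorem~\ref{SB:LDU}. By Theorem~\ref{Gen:SB:LDU} there is a factorization $A(\lambda,\rho)=UDL$ in which $U$ is an $a\times a$ upper unitriangular matrix over $\ZZ[x]$, $L$ is a $b\times b$ lower unitriangular matrix over $\ZZ[x]$, and $D$ is the $a\times b$ matrix whose only nonzero entries are $d_1,d_2,\ldots,d_c$, sitting in the positions $(a,b),(a-1,b-1),\ldots,(a-c+1,b-c+1)$. Since the inverse of an upper (respectively lower) unitriangular matrix is again upper (respectively lower) unitriangular, the matrices $P:=U^{-1}$ and $Q:=L^{-1}$ lie in $\SL(a,\ZZ[x])$ and $\SL(b,\ZZ[x])$ and satisfy $PA(\lambda,\rho)Q=D$. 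So $A(\lambda,\rho)$ is $\SL$-equivalent to $D$, and all that remains is to bring $D$ itself into diagonal Smith form using permutations.

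Next I would choose a permutation matrix $X$ of size $a$ and a permutation matrix $Y$ of size $b$ that slide the staircase of nonzero entries of $D$ to the top-left corner, so that $XDY$ is the genuinely diagonal $a\times b$ matrix displayed in the statement (the $d_i$ down the main diagonal, then zeros). To see that this really is a Smith normal form over $\ZZ[x]$ I would check two facts about the $d_i$. First, $d_1=1$: the hypothesis that $(a,b)$ lies on the border strip but not in $\lambda$ forces $\lambda(a,b)$ to be empty, since any cell of $\lambda$ weakly southeast of $(a,b)$ would pull $(a,b)$ into $\lambda$. Second, $d_1\mid d_2\mid\cdots\mid d_c$ in $\ZZ[x]$: the subshapes defining the $d_i$ are nested, $\lambda(a,b)\subseteq\lambda(a-1,b-1)\subseteq\cdots\subseteq\lambda(a-c+1,b-c+1)$, so each $d_i$ divides the next. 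Uniqueness of the Smith form over the unique factorization domain $\ZZ[x]$ then identifies this diagonal matrix with the one in the statement. Finally, exactly as at the end of the proof of Corollary~\ref{SB:Theorem}, I would absorb any sign discrepancy in $\det X$ or $\det Y$ into factors $\mathrm{diag}(-1,1,\ldots,1)$ (the extra zero rows and columns, present whenever $c<a$ or $c<b$, leave even more room for this), so that both conjugators have determinant $1$; composing them with $P$ and $Q$ then exhibits the desired special Smith normal form over $\ZZ[x]$.

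The only step beyond routine bookkeeping is the pair of facts $d_1=1$ and $d_1\mid d_2\mid\cdots\mid d_c$, which I expect to be the main obstacle; both follow directly from the nesting of the subshapes $\lambda(a-i+1,b-i+1)$ and the position of $(a,b)$ on the border strip outside $\lambda$. Everything else just reuses the permutation-and-sign manipulation already carried out for Corollary~\ref{SB:Theorem}, together with the single extra observation that $D$ vanishes off its staircase, so that no interaction between its rows and its columns can obstruct the diagonalization.
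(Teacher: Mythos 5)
Your argument is correct and is essentially the paper's own proof: factor $A(\lambda,\rho)=UDL$ via Theorem~\ref{Gen:SB:LDU}, invert the unitriangular factors, permute the staircase of nonzero entries onto the main diagonal, and repair any sign with a $\diag(-1,1,\ldots,1)$ factor; the paper permutes rows only (when $a\ge b$ the staircase already meets every column, so no column permutation is needed) and disposes of $a<b$ by transposing, but these differences are cosmetic. One caveat: what your construction actually produces is $\diag(d_1,d_2,\ldots,d_c,0,\ldots,0)$ with $d_1=1$ and $d_1\mid d_2\mid\cdots\mid d_c$ (both facts you verify correctly from $\lambda(a,b)=\emptyset$ and the nesting of the $\lambda(a-i+1,b-i+1)$), and this is the right divisibility order; the list $\diag(1,d_{c-1},\ldots,d_1,0,\ldots,0)$ printed in the statement has its subscripts reversed --- in the square case its product even differs from $\det A(\lambda)$ by the factor $d_c=\prod_{s\in\lambda}x_s$ --- so your appeal to uniqueness of the Smith form cannot literally identify your diagonal with the displayed one; rather, you have proved the corrected form of the statement, which is what the paper's own proof also establishes.
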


\begin{proof}
Corollary~\ref{SB:Theorem} handles $a=b$. Assume $a>b$. 
Let $D$ be as in Theorem~\ref{Gen:SB:LDU}. 
Then 
$XD=\diag_{a\times b}(d_1,d_2,\ldots,d_{c-1},1,0,\ldots,0)$ 
for some $X$ that is a permutation matrix with last row 
possibly scaled by $-1$ so that $\det X=1$.
The proof of Theorem~\ref{Gen:SB:LDU} 
provides $P,Q$ such that $\det(P)=\det(Q)=1$ 
and 
\[P\,\diag(d_1,d_2,d_3,\ldots,d_{c-1},1)\,Q=
\diag(1,d_{c-1},d_{c-2},\ldots,d_1).\] 
Consider the block matrix $P'=\diag(P,I_{a-b})$. 
Then $\det P'=1$ and  
\[P'\, \diag_{a\times b}(d_1,d_2,\ldots,d_{c-1},1,0,\ldots,0)\, Q 
=\diag_{a\times b}(1,d_{c-1},d_{c-2},\ldots,d_1,0,\ldots,0).\]
The case $a<b$ follows by transposing matrices. 
\end{proof}

\section{More Gram matrices}\label{Section:Gram}
\noindent
In this section we give two 
analogues of Theorem~\ref{Main:Theorem}: 
one for biorthogonal 
polynomials (Theorem~\ref{Toeplitz:SNF}), 
and the other for finite lattices (Theorem~\ref{Cor:Char}). 

\subsection{Biorthogonal version of Theorem~\ref{Main:Theorem}.}
There is a version of Theorem~\ref{Main:Theorem} for Toeplitz matrices. 
In an integer polynomial ring take two sequences $b=(b_0,b_1,\ldots)$ and $\lambda=(\lambda_1,\lambda_2,\ldots)$ 
subject to $b_i\neq 0$ and 
define $q_0(z),q_1(z),\ldots $ by
\begin{equation}\label{L:three-term}
q_{n+1}(z)=(z-b_n)q_n(z)-z\lambda_nq_{n-1}(z),\quad \text{$q_{-1}(z)=0, q_0(z)=1$.}
\end{equation}
Let $\L$ be the unique linear functional on $\K[z,z^{-1}]$ determined by  
$\L(1)=1$ and 
\begin{equation}\label{L:Orth}
\L(z^mq_n(1/z))=0\quad (0\leq m<n).
\end{equation}
Then 
\begin{equation}\label{B:Orth}
\L(p_m(z)q_n(1/z))=(-1)^n\frac{\lambda_1\lambda_2\ldots \lambda_n}{b_1b_2\ldots b_n}\delta_{mn}
\end{equation}
for $p_0(z)=1$ and
\begin{equation}\label{Partner:Polys}
 p_m(z)=\frac{z^mq_{m+1}(1/z)-z^{m-1}q_m(1/z)}{(-1)^{m+1}b_0b_1\ldots b_m}\quad (m\geq 1)
\end{equation}
so that $p_m(z)$ is a monic polynomial of degree $m$ over $\ZZ[b_0^{\pm 1},\ldots,b_m^{\pm 1},\lambda_1,\ldots, \lambda_m]$.

Kamioka \cite{K} gave a combinatorial approach to these {\it Laurent biorthogonal polynomials} and 
the moments of $\L$ are in terms of {\it Schr\"oder paths}. 
These are the lattice paths $\omega=(\omega_1,\omega_2,\ldots)$ from the origin to the $x$-axis 
that stay at or above the $x$-axis with steps $\omega_i$ chosen from the following types: 
\begin{equation}
\makebox[0pt]{\begin{tabular}{l@{\quad}l@{\qquad}l@{\qquad}l}%
\\\toprule
$\omega_i$ &  & $\overline{\rm wt}(\omega_i)$ & ${\rm wt}(\omega_i)$ \\ 
\midrule
$NE$ &  $(x,k)\to (x+1/2,k+1)$ & 1 & 1 \\
$E$ & $(x,k)\to(x+1,k)$ & $b_k$ & $1/b_k$ \\
$SE$ & $(x,k)\to (x+1/2,k-1)$ & $\lambda_k$ & $\lambda_k/(b_{k-1}b_k)$\\
\bottomrule\\
\end{tabular}}
\end{equation}
Put $f(\omega)=f(\omega_1)f(\omega_2)\dots$ for $f=\wt,\overline{\wt}$ defined above.
Then the {\it $n$-th positive moment} $\L(x^n)$ $(n\geq 0$) 
is the weighted generating function
\begin{align}
\mu_n&=\sum_\omega {\rm wt}(\omega)\label{Laurent:moment:pos}\\
\intertext{over Schr\"oder paths $\omega$ ending at $(0,n)$, and $\L(x^{-n})$ is the {weighted~generating~function}}
\mu_{-n}&=\sum_\omega \overline{\rm wt}(\omega)\label{Laurent:moment:neg}
\end{align}
over Schr\"oder 
paths $\omega$ ending at $(0,n)$ with first step $\omega_1$ horizontal ($E$); see Figure~\ref{Example:Schroder}.

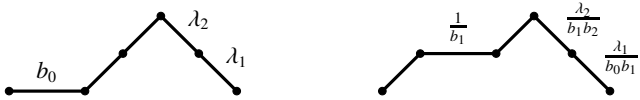
\begin{figure}[hbt]\centering
\hfill
\begin{tikzpicture}[scale=.5]
\draw [very thick] (0,0) --(2,0)--(3,1)--(4,2)--(5,1)--(6,0);
\foreach \c in {(0,0),(2,0),(3,1),(4,2),(5,1),(6,0)}
         \draw [fill] \c circle [radius=.1];
\draw [above] (1,0) node{\small$b_0$};
\draw (4.5+.5,1.5+.4) node{\small$\lambda_2$};
\draw (5.5+.5,0.5+.4) node{\small$\lambda_1$};
\end{tikzpicture}\hfill
\begin{tikzpicture}[scale=.5]
\draw [very thick] (0,0) --(1,1)--(3,1)--(4,2)--(5,1)--(6,0);
\foreach \c in {(0,0),(1,1),(3,1),(4,2),(5,1),(6,0)}
         \draw [fill] \c circle [radius=.1];
\draw [above] (2,1) node{\small$\frac{1}{b_1}$};
\draw (4.5+.8,1.5+.4) node{\small$\frac{\lambda_2}{b_1b_2}$};
\draw (5.5+.8,0.5+.4) node{\small$\frac{\lambda_1}{b_0b_1}$};
\end{tikzpicture}\hfill\hfill
\caption{
$\omega=(E,NE,NE,SE,SE)$ 
has ${\overline{\wt}(\omega)=b_0\lambda_2\lambda_1}$. 
${\omega=(NE,E,NE,SE,SE)}$ 
has ${\wt(\omega)=\frac{1}{b_1}\frac{\lambda_2}{b_1b_2}\frac{\lambda_1}{b_0b_1}}$.
}\label{Example:Schroder}
\end{figure}

\begin{theorem}\label{Toeplitz:SNF}
$(\mu_{i-j})_{0\leq i,j\leq n}$ has SSNF $\diag(1,-\frac{\lambda_1}{b_1},\frac{\lambda_1\lambda_2}{b_1b_2},\ldots,\pm\frac{\lambda_1\lambda_2\ldots \lambda_n}{b_1b_2\ldots b_n})$ 
over $\ZZ[b,b^{-1},\lambda]$.
\end{theorem}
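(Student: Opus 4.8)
The plan is to imitate the proof of Theorem~\ref{Main:Theorem}, with the single family $\{p_n\}$ and its self-pairing replaced by the biorthogonal pair $\{p_m(z)\}$, $\{q_\ell(1/z)\}$ and by the pairing~\eqref{B:Orth}. Let $P=(P_{mk})_{0\le m,k\le n}$ be the matrix whose $(m,k)$ entry is the coefficient of $z^k$ in the partner polynomial $p_m(z)$ of~\eqref{Partner:Polys} (with $p_0=1$). Since $p_m(z)$ is monic of degree $m$ over $\ZZ[b,b^{-1},\lambda]$, as recorded just after~\eqref{Partner:Polys}, the matrix $P$ is lower unitriangular over $\ZZ[b,b^{-1},\lambda]$; in particular $\det P=1$.

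Next, a one-line induction on the recurrence~\eqref{L:three-term} shows that each $q_\ell(z)$ is monic of degree $\ell$ with coefficients in $\ZZ[b,\lambda]$: one has $q_0=1$, $q_1=z-b_0$, and in $q_{n+1}(z)=(z-b_n)q_n(z)-z\lambda_nq_{n-1}(z)$ the term $(z-b_n)q_n$ supplies the leading power $z^{n+1}$ while $z\lambda_nq_{n-1}$ reaches only degree $n$. Hence, letting $Q_{\ell k}$ be the coefficient of $z^{-k}$ in $q_\ell(1/z)$ --- equivalently the coefficient of $z^k$ in $q_\ell(z)$ --- the matrix $Q=(Q_{\ell k})_{0\le \ell,k\le n}$ is lower unitriangular over $\ZZ[b,\lambda]$ and $\det Q=1$.

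Now expand $p_m(z)q_\ell(1/z)=\sum_{k,k'}P_{mk}Q_{\ell k'}z^{k-k'}$ and apply $\L$ termwise, using $\L(z^{k-k'})=\mu_{k-k'}$. Writing $T=(\mu_{i-j})_{0\le i,j\le n}$ for the Toeplitz matrix, this reads
\[
(PTQ^t)_{m\ell}=\sum_{k,k'}P_{mk}\,\mu_{k-k'}\,Q_{\ell k'}=\L\bigl(p_m(z)\,q_\ell(1/z)\bigr),
\]
and by the biorthogonality~\eqref{B:Orth} the right-hand side equals $(-1)^\ell\tfrac{\lambda_1\lambda_2\cdots\lambda_\ell}{b_1b_2\cdots b_\ell}\,\delta_{m\ell}$. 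Therefore $PTQ^t=\diag\bigl(1,-\tfrac{\lambda_1}{b_1},\tfrac{\lambda_1\lambda_2}{b_1b_2},\ldots,(-1)^n\tfrac{\lambda_1\lambda_2\cdots\lambda_n}{b_1b_2\cdots b_n}\bigr)$. Since $\det P=\det Q^t=1$, the matrices $P$ and $Q^t$ lie in $\SL(n+1,\ZZ[b,b^{-1},\lambda])$, and consecutive diagonal entries differ by the ring element $-\lambda_{i+1}/b_{i+1}\in\ZZ[b,b^{-1},\lambda]$, so condition~\eqref{SNF:d} holds; this is the claimed SSNF.

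The computation is routine once the two matrices are correctly chosen, and I do not expect a genuine obstacle. The two points needing a little care are: recognizing that it is the partner polynomials $p_m$ from~\eqref{Partner:Polys}, not the $q_n$, that furnish the row transformation --- which is why the base ring must be the Laurent ring $\ZZ[b,b^{-1},\lambda]$ forced by the denominator $b_0b_1\cdots b_m$ --- and matching the exponent $z^{k-k'}$ of the Laurent pairing to the Toeplitz entry $\mu_{k-k'}$ rather than a Hankel entry $\mu_{k+k'}$. In short, Theorem~\ref{Toeplitz:SNF} is the Toeplitz shadow of~\eqref{B:Orth} exactly as Theorem~\ref{Main:Theorem} is the Hankel shadow of~\eqref{orthogonal:formula}.
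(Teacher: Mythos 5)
Your proposal is correct and follows essentially the same route as the paper: form $P$ and $Q$ from the coefficients of the partner polynomials $p_m(z)$ of~\eqref{Partner:Polys} and of $q_\ell(z)$, observe that both are lower unitriangular over $\ZZ[b,b^{-1},\lambda]$, and read off $PTQ^t$ as the diagonal matrix via the biorthogonality relation~\eqref{B:Orth}. The extra details you supply (monicity of $q_\ell$ by induction, the index bookkeeping $z^{k-k'}\mapsto\mu_{k-k'}$, and the divisibility of consecutive diagonal entries by $-\lambda_{i+1}/b_{i+1}$) are all consistent with, and slightly more explicit than, the paper's argument.
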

\begin{proof}
Write $P_{ik}$ for the coefficient of $z^k$ in $p_i(z)$, 
and write $Q_{ik}$ for the coefficient of $z^k$ in $q_i(z)$. 
Let $P=(P_{ik})_{0\leq i,k\leq n}$ and $Q=(Q_{ik})_{0\leq i,k\leq n}$.
Then by~\eqref{B:Orth} 
\begin{equation}\label{PHQ}
PTQ^{t}=\diag\left(1,-\frac{\lambda_1}{b_1},\frac{\lambda_1\lambda_2}{b_1b_2},\ldots,(-1)^n\frac{\lambda_1\lambda_2\ldots\lambda_n}{b_1b_2\ldots b_n}\right),\quad T=(\mu_{i-j})_{0\leq i,j\leq n}.
\end{equation}
Since the polynomials 
$p_m(z)$ and $q_m(z)$ are monic of 
degree $m$ over $\ZZ[b,b^{-1},\lambda]$, the matrices  
$P$ and $Q$ are lower unitriangular over $\ZZ[b,b^{-1},\lambda]$. 
\end{proof}

Theorem~\ref{Toeplitz:SNF} gives an interesting 
Schr\"oder analogue of Corollary~\ref{Catalan:SNF}.  
Let $R_n$ be the number of Schr\"oder paths ending at $(0,n)$ so that in terms of 
Catalan numbers $C_n$ then $R_n$ equals $\sum_{k=0}^n \binom{n+k}{n-k}C_k$. 
Put $R_n(q)=\sum_{k=0}^n \binom{n+k}{n-k}C_k(q)$ and consider the Hankel-like matrix
\begin{equation}\label{S:T}%
\left(R_{i+j-1_{\{j>i\}}}(q)\right)_{0\leq i,j\leq n}=
{\footnotesize{\begin{pmatrix}
R_0(q) & R_0(q) & R_1(q) & R_2(q) &\ldots & R_{n-1}(q)\\
R_1(q) & R_0(q) & R_0(q) &             & \ddots  &\vdots\\
R_2(q) & R_1(q) &        & \ddots      &       & R_2(q)\\
R_3(q) &        & \ddots       &       &R_0(q) &R_1(q) \\
\vdots &\ddots  &        & R_1(q)& R_0(q) & R_0(q)\\
R_{n}(q)&\ldots& R_3(q)& R_2(q)& R_1(q)& R_0(q)
\end{pmatrix}}}.
\end{equation}

\begin{corollary}
The matrix in \eqref{S:T} has SSNF $\diag(1,-q^{\binom{1}{2}},q^{\binom{2}{2}},\ldots,
\pm q^{\binom{n}{2}})$ over~$\ZZ[q]$.
\end{corollary}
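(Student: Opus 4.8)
The plan is to read this off from Theorem~\ref{Toeplitz:SNF} after choosing the sequences $b$ and $\lambda$ correctly. First I would unravel the Toeplitz structure of the matrix in \eqref{S:T}: its $(i,j)$-entry depends only on $i-j$, and the matrix equals $T=(\mu_{i-j})_{0\le i,j\le n}$ for the moment sequence $\mu_m=R_m(q)$ $(m\ge 0)$, $\mu_{-m}=R_{m-1}(q)$ $(m\ge 1)$; in particular $\mu_0=R_0(q)=1$, consistent with $\L(1)=1$.

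Second, I would take $b_n=1$ for all $n$ and $\lambda_n=q^{n-1}$ and verify that the Laurent biorthogonal polynomials attached to these by \eqref{L:three-term} have exactly those moments. For this choice, since $b_k=1$, the Schr\"oder-step weights of \S\ref{Section:Gram} collapse: $\wt(NE)=\overline{\wt}(NE)=1$, $\wt(E)=1/b_k=1=b_k=\overline{\wt}(E)$, and $\wt(SE)=\lambda_k/(b_{k-1}b_k)=\lambda_k=\overline{\wt}(SE)=q^{k-1}$, so $\wt$ and $\overline{\wt}$ coincide. A Schr\"oder path contributing to $\mu_m$ in \eqref{Laurent:moment:pos} consists of a Dyck skeleton with $k$ up-steps (for some $0\le k\le m$) together with $m-k$ flat $E$-steps interleaved among the $m+k$ steps of the path, in one of $\binom{m+k}{m-k}$ ways; the flat steps carry weight $1$ and leave all heights unchanged, so the $\wt$ of the path equals the area-type $q$-weight of its Dyck skeleton as in \S\ref{q:catalan:first}, and these $q$-weights sum to $C_k(q)$ over Dyck paths with $k$ up-steps. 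Hence $\mu_m=\sum_{k=0}^m\binom{m+k}{m-k}C_k(q)=R_m(q)$. For the negative moments, \eqref{Laurent:moment:neg} restricts to paths whose first step is a flat $E$ at height $0$; deleting that step is a weight-preserving bijection onto the Schr\"oder paths appearing in \eqref{Laurent:moment:pos} for index $m-1$, so $\mu_{-m}=\mu_{m-1}=R_{m-1}(q)$. (Equivalently, with $F(u)=\sum_k C_k(q)u^k$ one has $\sum_m R_m(q)t^m=\tfrac{1}{1-t}F\!\big(t/(1-t)^2\big)$, and iterating $F(u)=1/(1-uF(qu))$ rewrites the right side as the continued fraction associated with \eqref{L:three-term} for $b_n=1$, $\lambda_n=q^{n-1}$.)

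Third, having identified $T$ with the Toeplitz matrix of these moments, Theorem~\ref{Toeplitz:SNF} applies over $\ZZ[b,b^{-1},\lambda]=\ZZ[q]$ and gives SSNF $\diag(1,-\tfrac{\lambda_1}{b_1},\tfrac{\lambda_1\lambda_2}{b_1b_2},\ldots,\pm\tfrac{\lambda_1\cdots\lambda_n}{b_1\cdots b_n})$. Since $b_n=1$ and $\lambda_1\lambda_2\cdots\lambda_k=q^{0+1+\cdots+(k-1)}=q^{\binom{k}{2}}$, this diagonal is exactly $\diag(1,-q^{\binom{1}{2}},q^{\binom{2}{2}},\ldots,\pm q^{\binom{n}{2}})$, as claimed.

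The bulk of the work is the moment identification in the second step — making the Schr\"oder-path/Dyck-skeleton decomposition and the shifted negative-moment bijection precise — whereas the Toeplitz reading of \eqref{S:T}, the invocation of Theorem~\ref{Toeplitz:SNF}, and the exponent bookkeeping $\prod_{i\le k}\lambda_i/b_i=q^{\binom{k}{2}}$ are routine.
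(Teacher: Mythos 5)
Your proposal is correct and follows exactly the paper's route: specialize $b_n=1$, $\lambda_n=q^{n-1}$, identify the positive and negative moments with $R_m(q)$ and $R_{m-1}(q)$ via the Schr\"oder-path formulas \eqref{Laurent:moment:pos}--\eqref{Laurent:moment:neg}, and apply Theorem~\ref{Toeplitz:SNF}. The only difference is that you spell out the Dyck-skeleton/interleaving argument and the delete-the-first-$E$-step bijection that the paper leaves implicit.
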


\begin{proof}
Put $b_n=1$ and $\lambda_n=q^{n-1}$ to get from      
\eqref{Laurent:moment:pos}--\eqref{Laurent:moment:neg} that    
\begin{equation}
\mu_n=\begin{cases} R_n(q) & \text{if $n\geq 0$,}\\
R_{-n-1}(q) & \text{if $n<0$,}
\end{cases}
\end{equation}
and then apply Theorem \ref{Toeplitz:SNF}.
\end{proof}

\subsection{Other Gram matrices}
The Hankel matrix $(\mu_{i+j})_{0\leq i,j\leq n}$ in Theorem~\ref{Main:Theorem} 
can be viewed as the Gram matrix 
$G=(\langle x,y\rangle)_{x,y\in\mathcal O}$ where 
$\mathcal O$ is the set of 
generating functions $p_n(x)$ of 
{\em Favard paths of height $n$} with pairing $\langle p_n(x) p_m(x)\rangle=\mathcal L(p_n(x)p_m(x))$. 
There is an analogous statement for the Toeplitz matrix 
$(\mu_{i-j})_{0\leq i,j\leq n}$ in Theorem~\ref{Toeplitz:SNF}. 
Here are three more examples using Gram matrices. 

\subsubsection{First example} 
Let $L$ be a finite ranked lattice with 
an arbitrary fixed ordering $L=(x_1,x_2,\ldots,x_n)$. 
Take a function $f:L\to \K$ 
and put $\langle x,y\rangle=f(x\vee y)$. Define 
\begin{equation}
G=(f(x\vee y))_{x,y\in L}=(f(x_i\vee x_j))_{1\leq i,j\leq n}.
\end{equation}
Write 
\begin{equation}
Z=(\zeta(x_i,x_j))_{1\leq i,j\leq n},\quad \quad 
\zeta(x,y)=\begin{cases} 1 & \text{if $x\leq y$,}\\ 0 & \text{otherwise.}\end{cases}
\end{equation}
Let $\mu(x,y)$ be the M\"obius function given by 
$(\mu(x_i,x_j))_{1\leq i,j\leq n}=Z^{-1}$. 
Then the function 
$g(x)=\sum_{y\geq x}\mu(x,y)f(y)$ satisfies $f(x)=\sum_{y\geq x}g(y)$.
\begin{proposition}\label{G:Factorization:Prop}
\begin{enumerate}[\rm (a)]
\item \label{G:Factorization} $G=Z\, \diag(g(x_1),g(x_2),\ldots, g(x_n))\, Z^t$ and $Z\in {\rm SL}(n,R)$. 
\item 
If the ordering $L=(x_1,x_2,\ldots,x_n)$ 
is chosen 
so that 
$i\leq j$ whenever $x_i\leq x_j$ (resp. $x_j\leq x_i$), then $Z$ is upper (resp. lower) unitriangular.
\end{enumerate}
\end{proposition}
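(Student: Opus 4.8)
The plan is to prove part \eqref{G:Factorization} first by a direct M\"obius-inversion computation, and then read off part (b) as an immediate consequence of the ordering hypothesis. For \eqref{G:Factorization}, I would start from the relation $f(x)=\sum_{y\ge x}g(y)$, which is exactly the statement $f = Z g$ when $f,g$ are viewed as column vectors indexed by $L$; this is the defining inversion pair associated to the $\zeta$-matrix and its inverse, the M\"obius matrix $Z^{-1}=(\mu(x_i,x_j))$. Then I would expand the $(i,j)$ entry of the claimed product:
\[
\bigl(Z\,\diag(g(x_1),\dots,g(x_n))\,Z^t\bigr)_{ij}
=\sum_{k=1}^n \zeta(x_i,x_k)\,g(x_k)\,\zeta(x_j,x_k)
=\sum_{x_k\ge x_i,\ x_k\ge x_j} g(x_k).
\]
The key combinatorial observation is that in a lattice the condition ``$x_k\ge x_i$ and $x_k\ge x_j$'' is equivalent to ``$x_k\ge x_i\vee x_j$'', since the join is by definition the least upper bound. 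Hence the sum collapses to $\sum_{x_k\ge x_i\vee x_j} g(x_k) = f(x_i\vee x_j) = G_{ij}$, which is precisely the identity we want. This is the one place where the lattice structure (as opposed to a general poset) is genuinely used, and it is the heart of the argument — though it is short.

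Next I would address the determinant claim $Z\in\SL(n,R)$. Since $Z$ has $1$'s on the diagonal (because $\zeta(x_i,x_i)=1$) and its inverse $Z^{-1}$ also has integer — in fact $\{0,\pm1,\dots\}$ — entries lying in the prime subring of $R$, we get $\det Z\cdot\det Z^{-1}=1$ with both factors in $\ZZ$ (or rather the image of $\ZZ$ in $R$), forcing $\det Z=\pm1$. To pin down the sign as $+1$, I would note that $Z$ is conjugate by a permutation matrix to an upper unitriangular matrix: choosing any linear extension of the partial order $\le$ on $L$ reorders the $x_i$ so that $\zeta(x_i,x_j)=0$ whenever $i>j$ in the new order, making $Z$ genuinely upper triangular with unit diagonal, hence determinant $1$; conjugation by a permutation matrix does not change the determinant. (Alternatively one cites that $\det$ is invariant under simultaneous row/column permutations.)

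Finally, part (b) is essentially definitional given the above. If the ordering is chosen so that $x_i\le x_j$ implies $i\le j$, then $\zeta(x_i,x_j)=1$ only when $x_i\le x_j$, which forces $i\le j$; so $Z_{ij}=0$ for $i>j$, i.e.\ $Z$ is upper triangular, and it is unitriangular because the diagonal entries $\zeta(x_i,x_i)$ are all $1$. The ``resp.'' case ($i\le j$ whenever $x_j\le x_i$) is the transpose situation and gives lower unitriangularity by the same reasoning. I do not anticipate any real obstacle here; the only subtlety worth stating carefully is the lattice identity $\{z: z\ge x,\ z\ge y\}=\{z: z\ge x\vee y\}$, and the minor point that $Z$'s determinant is $+1$ rather than merely $\pm1$, which I would justify via the linear-extension reordering as above rather than leaving it to the reader.
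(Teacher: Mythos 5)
Your proof is correct and follows essentially the same route as the paper: the identity $f(x_i\vee x_j)=\sum_{z\ge x_i,\,x_j}g(z)=\sum_k\zeta(x_i,x_k)g(x_k)\zeta(x_j,x_k)$ is exactly the paper's one-line computation, and the determinant and triangularity claims are handled there too by observing that $Z$ is conjugate by a permutation matrix to an upper unitriangular matrix. Your write-up merely spells out the details (the lattice identity for the join, and the linear-extension reordering) that the paper leaves as ``clear.''
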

\begin{proof}
$f(x\vee y)=\sum_{z\geq x,y}g(z)=\sum_{z\in L}\zeta(x,z)g(z)\zeta(y,z)$. 
The rest is clear, since $Z$ is conjugate (by a suitable permutation matrix) 
to an upper unitriangular matrix. 
\end{proof}

\begin{corollary}[Lindstr\"om {{\cite{L}}}] $\det G=\prod_{x\in L}g(x)$.\qed
\end{corollary}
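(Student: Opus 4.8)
The plan is to read this off directly from Proposition~\ref{G:Factorization:Prop}\eqref{G:Factorization}, which already does all the work. That proposition asserts the factorization $G = Z\,\diag(g(x_1),g(x_2),\ldots,g(x_n))\,Z^t$ together with the statement $Z\in\SL(n,R)$. So the first and only real step is to take determinants of both sides of this identity.

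Taking $\det$ of the product gives $\det G = (\det Z)\bigl(\prod_{i=1}^n g(x_i)\bigr)(\det Z^t)$. Since $Z\in\SL(n,R)$ we have $\det Z = 1$, and hence also $\det Z^t = \det Z = 1$. Therefore $\det G = \prod_{i=1}^n g(x_i) = \prod_{x\in L} g(x)$, which is exactly the claimed identity.

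Since this is a direct corollary, there is no genuine obstacle: everything substantive — the verification that $f(x\vee y) = \sum_{z\in L}\zeta(x,z)g(z)\zeta(y,z)$, i.e.\ the Möbius-inversion identity $f(x)=\sum_{y\ge x}g(y)$ underlying $g(x)=\sum_{y\ge x}\mu(x,y)f(y)$, and the fact that $Z$ is conjugate via a permutation matrix to an upper unitriangular matrix and thus lies in $\SL(n,R)$ — has already been established in the preamble and in the proof of Proposition~\ref{G:Factorization:Prop}. If one wanted to make the argument self-contained rather than invoking the proposition, the only thing to re-derive would be the determinant $\det Z = \pm 1$; but the proposition pins down the sign as $+1$ (the permutation conjugating $Z$ to unitriangular form acts by the same permutation on rows and columns, so its sign cancels), which is why no absolute-value or sign ambiguity appears in the statement.
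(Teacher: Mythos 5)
Your proof is correct and is exactly the argument the paper intends: the corollary is stated with a \qed because it follows immediately from Proposition~\ref{G:Factorization:Prop}\eqref{G:Factorization} by taking determinants, using $\det Z=\det Z^t=1$. Your side remark about why $\det Z=+1$ (conjugation by a permutation matrix to a unitriangular matrix makes the sign cancel) matches the reasoning already given in the proposition's proof.
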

\begin{corollary} \label{Poset:SNF:Cor}
If $\pi$ is a permutation such that 
$g(x_{\pi(i)})$ is a multiple of $g(x_{\pi(j)})$ whenever $i\geq j$, then 
the matrix $G$ has SSNF $\diag(g(x_{\pi(1)}),g(x_{\pi(2)}),\ldots, g(x_{\pi(n)}))$ 
over~$R$.\qed
\end{corollary}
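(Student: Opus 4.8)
The plan is to read this off from Proposition~\ref{G:Factorization:Prop}\eqref{G:Factorization}, which already supplies $G=Z\,\diag(g(x_1),\ldots,g(x_n))\,Z^{t}$ with $Z\in\SL(n,R)$; all that remains is to permute the diagonal entries into the order dictated by $\pi$ and to keep an eye on determinants so that the transforming matrices land in $\SL(n,R)$ rather than merely $\GL(n,R)$.

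Concretely, the steps I would carry out are as follows. Write $D=\diag(g(x_1),\ldots,g(x_n))$ and $D'=\diag(g(x_{\pi(1)}),\ldots,g(x_{\pi(n)}))$, and choose the permutation matrix $X$ with $XDX^{t}=D'$; since $X^{t}=X^{-1}$ we get $D=X^{t}D'X$ and hence
\[
G=(ZX^{t})\,D'\,(ZX^{t})^{t}.
\]
Set $P=(ZX^{t})^{-1}$, which lies in $\GL(n,R)$ because $\det(ZX^{t})=\det X=\pm1$ is a unit of $R$; then $PGP^{t}=D'$. Now $D'$ is diagonal of the correct shape, and the divisibility requirement \eqref{SNF:d}, namely that $(D')_{ii}$ be a multiple of $(D')_{jj}$ whenever $i\geq j$, is exactly the hypothesis placed on $\pi$. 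So $D'$ is a Smith normal form of $G$. To upgrade this to the \emph{special} form I would argue exactly as in Corollaries~\ref{SB:Theorem} and~\ref{BS:General}: if $\det P=1$ we are done, and if $\det P=-1$ we replace $(P,P^{t})$ by $(YP,\,P^{t}Y)$ with $Y=\diag(-1,1,\ldots,1)$, so that $(YP)\,G\,(P^{t}Y)=YD'Y=D'$ while $\det(YP)=\det(P^{t}Y)=1$.

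I do not expect a genuine obstacle here: once Proposition~\ref{G:Factorization:Prop} is in hand, the statement is a purely formal consequence of the fact that $G$ is $\SL$-congruent to the diagonal matrix $D$. The only point that needs to be stated with care is the determinant bookkeeping in the last step, which guarantees that the diagonalizing matrices can be taken in $\SL(n,R)$ and not just in $\GL(n,R)$.
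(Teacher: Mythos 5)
Your proof is correct and follows exactly the route the paper intends: the corollary is stated with a \qed precisely because it is the formal consequence of Proposition~\ref{G:Factorization:Prop}\eqref{G:Factorization} that you describe, namely conjugating $D$ by the permutation matrix realizing $\pi$ and fixing the sign with $Y=\diag(-1,1,\ldots,1)$ as in Corollary~\ref{SB:Theorem}. Nothing is missing.
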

\noindent
The next theorem is a direct consequence 
of Proposition~\ref{G:Factorization:Prop} and Corollary~\ref{Poset:SNF:Cor} for 
$\K=\ZZ[q]$ and $f(x)=q^{{\rm rank}(L)-{{\rm rank}(x)}}$.  
In this case $g(x)$ is the {\em characteristic polynomial} 
$\chi([x,1],q)$ of the interval 
$[x,1]=\{y\in L : x\leq y\}$.

\begin{theorem}\label{Cor:Char} 
Let $L$ be a finite ranked lattice with an ordering $L=(x_1,x_2,\ldots,x_n)$. Let 
$G=(q^{{\rm rank}(L)-{\rm rank}(x_i\vee x_j)})_{1\leq i,j\leq n}$. 
Then the following hold.
\begin{enumerate}[\rm(a)]
\item $G =
Z\, \diag(\chi([x_1,1],q),\chi([x_2,1],q),\ldots,\chi([x_n,1],q))\, Z^t$.
\item \label{Char:SNF}
Suppose that $\chi([x,1],q)$
depends only on the rank of $x$. 
Define $l={\rm rank}(L)$, 
 ${L_k=\{x : {\rm rank}(x)=l-k \}}$, and 
$\chi_k(q)=\chi([x,1],q)$ for $x\in L_k$ ($0\leq k\leq l$).
If $\chi_i(q)$ is a multiple of $\chi_j(q)$ whenever 
$i> j$, then the matrix $G$ has SSNF
$\diag(\chi_0(q)I_{|L_0|},\chi_1(q)I_{|L_1|},\ldots, \chi_{l}(q)I_{|L_l|})$ over $\ZZ[q]$. 
\qed
\end{enumerate}
\end{theorem}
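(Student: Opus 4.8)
The plan is to deduce Theorem~\ref{Cor:Char} directly from the general machinery already set up in Proposition~\ref{G:Factorization:Prop} and Corollary~\ref{Poset:SNF:Cor}. Part~(a) should be immediate: apply Proposition~\ref{G:Factorization:Prop}\eqref{G:Factorization} with $\K=\ZZ[q]$ and $f(x)=q^{\operatorname{rank}(L)-\operatorname{rank}(x)}$, and observe that the associated $g(x)=\sum_{y\geq x}\mu(x,y)f(y)$ is exactly the characteristic polynomial $\chi([x,1],q)$ of the upper interval, by the defining formula $\chi([x,1],q)=\sum_{y\in[x,1]}\mu(x,y)q^{\operatorname{rank}(L)-\operatorname{rank}(y)}$. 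That $Z\in\SL(n,\ZZ[q])$ and hence the factorization is an $\SL$-equivalence is already part of Proposition~\ref{G:Factorization:Prop}.

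For part~(b) I would fix the ordering $L=(x_1,\dots,x_n)$ so that $i\leq j$ whenever $x_i\geq x_j$ (a linear extension of the reverse order), so that elements of higher rank come first; by Proposition~\ref{G:Factorization:Prop}(b) this makes $Z$ lower unitriangular, hence in $\SL(n,\ZZ[q])$. Under the hypothesis that $\chi([x,1],q)$ depends only on $\operatorname{rank}(x)$, the diagonal matrix $\diag(g(x_1),\dots,g(x_n))$ groups into blocks: the elements of rank $l-k$ contribute $|L_k|$ copies of $\chi_k(q)$, and with the chosen ordering these blocks appear in the order $k=0,1,\dots,l$, i.e.\ the diagonal is already $\diag(\chi_0(q)I_{|L_0|},\chi_1(q)I_{|L_1|},\dots,\chi_l(q)I_{|L_l|})$ up to a permutation that only shuffles equal entries within each block. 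Then the divisibility hypothesis ``$\chi_i(q)$ is a multiple of $\chi_j(q)$ whenever $i>j$'' is precisely the SNF ordering condition \eqref{SNF:d}, so the block-diagonal matrix already \emph{is} in Smith normal form; applying Corollary~\ref{Poset:SNF:Cor} (with $\pi$ the identity, or any within-block permutation) upgrades this to an SSNF statement over $\ZZ[q]$.

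The one point that needs a little care — and the only place I expect friction — is the bookkeeping between ``rank of $L$ minus rank of $x$'' and the block index $k$: one must check that $L_k=\{x:\operatorname{rank}(x)=l-k\}$ really does index the blocks in an order compatible with both the chosen linear extension \emph{and} the divisibility chain, so that no extra permutation matrix (with a possible sign) is needed beyond what Corollary~\ref{Poset:SNF:Cor} already absorbs. Since Corollary~\ref{Poset:SNF:Cor} explicitly allows an arbitrary permutation $\pi$ realizing the divisibility order, this is not a real obstacle — it is subsumed — but it is the step where the indices must be lined up honestly rather than waved through.

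Everything else is routine: the factorization is handed to us by Proposition~\ref{G:Factorization:Prop}, the $\SL$-refinement is built into it, and the reduction to SNF is Corollary~\ref{Poset:SNF:Cor} verbatim. So the proof is essentially a specialization-and-rearrangement argument, and can be written in a few lines once the identification $g(x)=\chi([x,1],q)$ is recorded.
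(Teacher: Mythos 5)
Your proposal is correct and follows exactly the route the paper takes: the paper proves Theorem~\ref{Cor:Char} precisely by invoking Proposition~\ref{G:Factorization:Prop} and Corollary~\ref{Poset:SNF:Cor} with $\K=\ZZ[q]$ and $f(x)=q^{{\rm rank}(L)-{\rm rank}(x)}$, noting that then $g(x)=\chi([x,1],q)$. Your extra care about the linear extension and the block bookkeeping is already absorbed by the arbitrary permutation $\pi$ in Corollary~\ref{Poset:SNF:Cor}, just as you suspected.
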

\noindent
Take for example the lattice $\Pi_n$  
of set partitions of $[n]$. Here $x\leq y$ in $\Pi_n$ 
if and only if each block in $x$ is a subset of some   
block in $y$. In particular, the bottom element 
of $\Pi_n$ is the partition with $n$ blocks.  
The top element is the partition with only $1$ block.  
Denote by $|x|$ the number of blocks in $x$ so that $|x|=\block(x)$.

\begin{corollary}
Over $\ZZ[q]$ the matrix $(q^{|x\vee y|})_{x,y\in \Pi_n}$ has SSNF  
\begin{equation}
\diag(qI_{S(n,1)},q(q-1)I_{S(n,2)},\ldots,q(q-1)\ldots(q-n+1)I_{S(n,n)})
\end{equation}
where $S(n,k)$ is the Stirling number of the second kind 
given by 
\begin{equation}
q^n=\sum_{k=0}^n q(q-1)\ldots (q-k+1)S(n,k).
\end{equation}
\end{corollary}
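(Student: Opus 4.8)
The plan is to apply Theorem~\ref{Cor:Char}\eqref{Char:SNF} to the lattice $L=\Pi_n$, so the main task is to verify its hypotheses and then read off the claimed diagonal form. First I would check that $\Pi_n$ is a finite ranked lattice: the rank of a partition $x$ is $n-|x|$, so ${\rm rank}(\Pi_n)=n-1$, the top element $\hat 1$ (one block) has rank $n-1$, and $x\vee y$ in $\Pi_n$ is the partition whose blocks are the connected components of the union of the blocks of $x$ and $y$. Next I would identify $f$: Theorem~\ref{Cor:Char} uses $f(x)=q^{{\rm rank}(L)-{\rm rank}(x)}=q^{(n-1)-(n-|x|)}=q^{|x|-1}$, so the matrix there is $(q^{|x\vee y|-1})$, which is $q^{-1}$ times the matrix $(q^{|x\vee y|})$ in the corollary; since scaling a whole matrix by a unit does not change the SNF except by that same unit, I can instead take $f(x)=q^{|x|}$ directly and get $g(x)=\chi([x,\hat 1],q)$ scaled by $q$ — more cleanly, I would just run Proposition~\ref{G:Factorization:Prop} with $f(x)=q^{|x|}$ and compute $g$ by M\"obius inversion in $\Pi_n$.

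The key computation is that $g(x)=\sum_{y\geq x}\mu(x,y)f(y)$ depends only on $|x|$ and equals $q(q-1)\cdots(q-|x|+1)$. For this I would use the standard fact that for $x\in\Pi_n$ with $k=|x|$ blocks, the interval $[x,\hat 1]$ is isomorphic to $\Pi_k$, and that $\sum_{\pi\in\Pi_k}\mu(\hat 0_{\Pi_k},\pi)\,q^{|\pi|}=q(q-1)\cdots(q-k+1)$ — this is precisely the classical formula $q^k=\sum_{j}S(k,j)\,q(q-1)\cdots(q-j+1)$ inverted, i.e. the evaluation of the characteristic polynomial of the partition lattice, $\chi(\Pi_k,q)=(q-1)(q-2)\cdots(q-k+1)$, combined with the extra factor coming from the top block; concretely $g(x)=\sum_{y\geq x}\mu(x,y)q^{|y|}=q\cdot\chi(\Pi_k,q)=q(q-1)\cdots(q-k+1)$ where $k=|x|$. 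I would verify the boundary cases: for $x=\hat 0$ (i.e. $k=n$) this gives $q(q-1)\cdots(q-n+1)$, and for $x=\hat 1$ ($k=1$) it gives $q$. So with the reindexing $L_j=\{x:{\rm rank}(x)=l-j\}=\{x:|x|=j+1\}$ we get $\chi_j(q)=q(q-1)\cdots(q-j)$, which has $|L_j|=S(n,j+1)$ elements.

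Then I would check the divisibility hypothesis of Theorem~\ref{Cor:Char}\eqref{Char:SNF}: $\chi_j(q)=q(q-1)\cdots(q-j)$ is visibly a multiple of $\chi_i(q)=q(q-1)\cdots(q-i)$ in $\ZZ[q]$ whenever $j>i$, since the former is the latter times $(q-i-1)\cdots(q-j)$. This lets me invoke the theorem to conclude that $G$ has SSNF $\diag(\chi_0(q)I_{|L_0|},\ldots,\chi_{n-1}(q)I_{|L_{n-1}|})$, i.e. $\diag(qI_{S(n,1)},q(q-1)I_{S(n,2)},\ldots,q(q-1)\cdots(q-n+1)I_{S(n,n)})$; finally, multiplying back by the unit $q^{-1}$ (or, equivalently, having worked with $f(x)=q^{|x|}$ throughout, which differs from the $f$ of Theorem~\ref{Cor:Char} by exactly this unit and hence only rescales the first diagonal block) recovers the stated matrix $(q^{|x\vee y|})_{x,y\in\Pi_n}$ with exactly the claimed SSNF, and the displayed identity $q^n=\sum_k q(q-1)\cdots(q-k+1)S(n,k)$ is just the $x=\hat 0$ instance of $f(x)=\sum_{y\ge x}g(y)$. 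The main obstacle is the M\"obius-function computation on $\Pi_n$ — specifically being careful that the relevant weight is $q^{|y|}$ rather than $q^{|y|-1}$ and tracking the resulting unit factor $q$ between the two matrices; once that bookkeeping is pinned down, everything else is a direct citation of Theorem~\ref{Cor:Char} and the well-known characteristic polynomial of the partition lattice.
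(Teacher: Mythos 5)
Your proposal is correct and follows essentially the same route as the paper: apply the lattice Gram factorization to $\Pi_n$, use $[x,\hat 1]\cong\Pi_k$ for $|x|=k$ to get $\chi([x,1],q)=(q-1)\cdots(q-k+1)$ with multiplicity $S(n,k)$, check the divisibility chain, and account for the discrepancy between $q^{|x\vee y|-1}$ and $q^{|x\vee y|}$ by scaling the matrix by $q$. One small slip: $q$ is not a unit in $\ZZ[q]$, so the phrase ``scaling by a unit'' (and ``only rescales the first diagonal block'') is off --- the correct justification, which is what the paper tacitly uses, is that $PAQ=D$ implies $P(qA)Q=qD$ and $qD$ still satisfies the divisibility condition, so every diagonal entry gets multiplied by $q$; your final displayed diagonal is nonetheless the right one.
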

\begin{proof}
There are exactly $S(n,k)$ elements $x$ in $\Pi_n$ 
such that $|x|=k$. For each one 
$\chi([x,1],q)=(q-1)\ldots(q-k+1)$. 
Hence by Theorem~\ref{Cor:Char}\eqref{Char:SNF} with $L=\Pi_n$ 
the matrix $(q^{|x\vee y|-1})_{x,y\in\Pi_n}$ 
has SSNF $\diag(I_{S(n,1)},(q-1)I_{S(n,2)},\ldots,(q-1)\ldots(q-n+1)I_{S(n,n)})$. 
Scaling by $q$ gives the result. 
\end{proof}

\subsubsection{Second example}
\noindent 
Let $x\in{\rm NC}_n$ be a noncrossing partition of $[n]$. 
Associate to $x$ the permutation $\sigma(x)\in S_n$ 
that has one cycle $(i_1\, i_2\, \ldots\, i_k)$ for 
each block ${\{i_1<i_2<\ldots<i_k\}}\in x$. 
The partition $\{1,2,\ldots,n\}$ 
corresponds to the long cycle $c=(1\,2\,\ldots\, n)$. 
The {\it dual partition} $x'\in{\rm NC}_n$ 
corresponds to $\sigma(x)^{-1}c$. Define 
\begin{equation}
J_n(q,\delta)=\Big(q^{|x\vee_{\Pi_n}y|}\delta^{|x'\vee_{\Pi_n}y'|}\Big)_{x,y\in {\rm NC}_n}
\end{equation}
and 
\begin{equation}
J_n(q)=J_n(q,1)=\Big(q^{|x\vee_{\Pi_n}y|}\Big)_{x,y\in {\rm NC}_n}.
\end{equation}

Dahab \cite{Dahab} expressed the determinant of $J_n(q,\delta)$ in terms of 
{\it Beraha factors} $f_k(z)$. 
Define polynomials $p_1(z),p_2(z),\ldots$ by the three-term recurrence relation 
\begin{equation}
p_{k+1}(z)=b_k p_k(z)-p_{k-1}(z),\qquad p_{-1}(z)=0,\ p_0(z)=1,
\end{equation}
\begin{equation}
b_k=
\begin{cases} z & \text{if $k$ is even,}\\
1 & \text{if $k$ is odd.}\end{cases}
\end{equation}
Then $f_k(z)$ ($k\geq 1$) is the unique irreducible factor of $p_k(z)$ over $\ZZ[z]$ that  
is a factor of no previous $p_j(z)$ ($j<k$). 
More explicitly,   
$f_k(z)$ ($k\geq 1$) is the minimal polynomial 
of 
$4\cos^2(\frac{\pi}{k+1})$, and is given by  
\begin{equation}
f_k(z)=\prod_{{1\leq j\leq (k+1)/{2}}\atop (j,k+1)=1 } \left(z-4\cos^2\frac{\pi j}{k+1}\right),\qquad (k\geq 1).\end{equation}

Dahab proved that \cite[Thm.~1.8.1]{Dahab}
\begin{equation}
\det J_n(z)=\prod_{k=1}^n f_k(z)^{m_k}
\end{equation}
and \cite[Thm.~2.5.2]{Dahab}
\begin{equation}
\det J_n(q,\delta)=\det J_n(q\delta)
\end{equation}
where 
\begin{equation}
m_k=\#\{\text{Dyck paths of length $2n$ and height $\geq k$}\}.
\end{equation}

We conjecture the following refinement of Dahab's 
determinantal evaluations.

\begin{conjecture}\label{Catalan:Sub:Question}
$J_n(q,\delta)$ has SSNF $\diag(s_1(q,\delta)I_{h_1}, s_2(q,\delta) I_{h_2},\ldots, s_n(q,\delta)I_{h_n})$ over $\ZZ[q,\delta]$,  
where 
\begin{equation}
h_k=\#\{\text{Dyck paths of length $2n$ and height $k$}\}
\end{equation}
and 
\begin{equation}
s_k(q,\delta)=f_1(q\delta)f_2(q\delta)\ldots f_k(q\delta)=\prod_{1\leq j\leq k}\prod_{{1\leq i \leq {(j+1)/{2}}}\atop{(i,j+1)=1}} \left(q\delta-4\cos^2\frac{\pi i}{j+1}\right).
\end{equation}
In particular:
\begin{enumerate}[\rm (a)]
\item $J_n(q)$ has SSNF $\diag(s_1(q)I_{h_1}, s_2(q) I_{h_2},\ldots, s_n(q)I_{h_n})$ over $\ZZ[q]$, where   
\begin{align}
s_k(q)=s_k(q,1)=f_1(q)f_2(q)\ldots f_k(q)=\prod_{1\leq j\leq k}\prod_{{1\leq i \leq {(j+1)/{2}}}\atop{(i,j+1)=1}} \left(q-4\cos^2\frac{\pi i}{j+1}\right).
\end{align}
\item $J_n(q,q)$ has SSNF $\diag(s_1'(q)I_{h_1}, s_2'(q) I_{h_2},\ldots, s_n'(q)I_{h_n})$ over $\ZZ[q]$, where   
\begin{align}
s_k'(q)=s_k(q,q)=f_1(q^2)f_2(q^2)\ldots f_k(q^2)=
q\prod_{{1\leq i \leq j\leq k}\atop{(i,j+1)=1}} \left(q-2\cos\frac{\pi i}{j+1}\right).
\end{align}
\end{enumerate}
\end{conjecture}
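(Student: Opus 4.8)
\textbf{Proof proposal for Conjecture~\ref{Catalan:Sub:Question}.}
The plan is to realize $J_n(q,\delta)$ as a Gram matrix in the sense of Section~\ref{Section:Gram} and then diagonalize it over $\ZZ[q,\delta]$ by the same linear-algebra mechanism that underlies Theorem~\ref{Main:Theorem} and Theorem~\ref{Cor:Char}: find an explicit unimodular $P$ over $\ZZ[q,\delta]$ with $PJ_n(q,\delta)P^t$ diagonal, read off the entries, and verify the divisibility chain. First I would fix the combinatorial meaning of the pairing. Since $x\mapsto x'$ is the Kreweras complementation on $\NC_n$, the factor $\delta^{|x'\vee_{\Pi_n}y'|}$ is the Gram matrix for the ``dual'' pairing, and by a now-standard bijective argument (the one that proves Dahab's $\det J_n(q,\delta)=\det J_n(q\delta)$) the map $x\mapsto x'$ should intertwine $J_n(q,\delta)$ with $J_n(q\delta,1)$ up to a unimodular change of basis over $\ZZ[q,\delta]$; concretely one expects $J_n(q,\delta)=V J_n(q\delta) V^t$ for some $V\in\SL$ over $\ZZ[q,\delta]$, so it suffices to treat $J_n(z)=J_n(z,1)$ with $z=q\delta$ and then substitute. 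This reduces the three-parameter statement to the one-parameter matrix $(z^{|x\vee_{\Pi_n}y|})_{x,y\in\NC_n}$.

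Next I would diagonalize $J_n(z)$ itself. The key is that $J_n(z)=(z^{|x\vee y|})$ is the Gram matrix $G=(f(x\vee y))$ of Proposition~\ref{G:Factorization:Prop} for $L=\NC_n$ under the \emph{refinement order induced from $\Pi_n$} and $f(x)=z^{|x|}$, \emph{except} that the join $x\vee_{\NC_n} y$ inside $\NC_n$ can differ from $x\vee_{\Pi_n}y$. So the clean route is: (i) extend the sum over $\Pi_n$ and use M\"obius inversion on $\NC_n$ — since $\NC_n$ is itself a lattice, write $z^{|x|}=\sum_{w\geq_{\NC_n} x} g_n(w)$ with $g_n(w)=\sum_{u\geq_{\NC_n}w}\mu_{\NC_n}(w,u)z^{|u|}$, giving $J_n(z)=Z_{\NC}\diag(g_n(w))_{w\in\NC_n} Z_{\NC}^t$ with $Z_{\NC}\in\SL$ over $\ZZ[z]$ by Proposition~\ref{G:Factorization:Prop}(a); then (ii) identify $g_n(w)=\chi([w,\hat 1]_{\NC_n},z)$, the characteristic polynomial of the upper interval in $\NC_n$. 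Now use the classical fact that every interval $[w,\hat 1]$ in $\NC_n$ is a product of smaller $\NC$-lattices, $[w,\hat 1]\cong \NC_1^{a_1}\times\NC_2^{a_2}\times\cdots$, so $\chi([w,\hat 1],z)=\prod_j \chi(\NC_j,z)^{a_j}$; and $\chi(\NC_j,z)$ is — this is the crucial input, to be checked from Dahab or reproved via the recurrence for $p_k$ — exactly the ``shifted Catalan'' polynomial, so that ordering the $w$'s by the height statistic turns $\diag(g_n(w))$ into $\diag(s_1(z)I_{h_1},\ldots,s_n(z)I_{h_n})$ up to a permutation (hence $\SL$) and after the usual reversal trick of Corollary~\ref{SB:Theorem}. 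Divisibility $s_k\mid s_{k+1}$ is immediate from $s_k=f_1\cdots f_k$.

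The main obstacle, and the part that needs genuine work rather than bookkeeping, is step (ii): matching the M\"obius/characteristic-polynomial data of $\NC_n$-intervals with the Beraha product $f_1f_2\cdots f_k$ \emph{and} controlling the multiplicities $h_k$. Two sub-points are delicate. The multiplicity count: the number of $w\in\NC_n$ whose upper interval has ``Beraha height exactly $k$'' must equal $h_k$, the number of Dyck paths of semilength $n$ and height exactly $k$; I expect this to follow from the tree/Dyck-path encoding of noncrossing partitions together with Dahab's already-established $m_k$ formula via $h_k=m_k-m_{k+1}$, but the bijection has to be made precise. The factorization-into-$\NC_j$'s point: one must show $\chi([w,\hat1],z)=\prod f_j(z)^{\,\cdot}$ \emph{collapses} — i.e. that for the purpose of the SNF only the top factor matters, which is where the divisibility of the $f_j$-ladder is used — and that no spurious units or sign issues appear over $\ZZ[z]$ (the polynomials $f_j$ have integer coefficients and constant term $\pm f_j(0)$, so one should double-check $J_n(z)$ really has an SNF over $\ZZ[z]$ and not merely a diagonalization over $\mathbf{Q}[z]$; the factorization $J_n=Z_{\NC}\diag Z_{\NC}^t$ with $Z_{\NC}$ unitriangular over $\ZZ[z]$ is what guarantees this). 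Finally one substitutes $z=q\delta$ and invokes the reduction $J_n(q,\delta)\cong J_n(q\delta)$ from the first paragraph to obtain the full conjecture, and the special cases (a), (b) by setting $\delta=1$ and $\delta=q$ and simplifying the product over coprime pairs.
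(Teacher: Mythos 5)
The statement you are addressing is stated in the paper as a \emph{conjecture}: the authors give no proof, so your argument can only be judged on its own internal logic, and on that score it has gaps that are fatal rather than merely technical. First, the opening reduction $J_n(q,\delta)=VJ_n(q\delta)V^t$ with $V\in\SL$ over $\ZZ[q,\delta]$ is asserted, not proved; Dahab's theorem gives only $\det J_n(q,\delta)=\det J_n(q\delta)$, and the entire premise of this paper is that equality of determinants does not upgrade automatically to unimodular congruence over a polynomial ring. Second, and more seriously, Proposition~\ref{G:Factorization:Prop} does not apply to $J_n(z)$. The entries are $z^{|x\vee_{\Pi_n}y|}$ with the join taken in $\Pi_n$, and $\NC_n$ is not a sublattice of $\Pi_n$: for noncrossing $x,y$ the partition $x\vee_{\Pi_n}y$ need not be noncrossing and need not equal $x\vee_{\NC_n}y$. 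Your step (i), M\"obius inversion on $\NC_n$, therefore diagonalizes the \emph{different} matrix $\bigl(z^{|x\vee_{\NC_n}y|}\bigr)$, while M\"obius inversion on $\Pi_n$ only realizes $J_n(z)$ as a rectangular slice of the full $\Pi_n\times\Pi_n$ factorization, which yields no invertible $Z$. This mismatch is exactly why the statement is a conjecture rather than a corollary of Theorem~\ref{Cor:Char}.

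Step (ii) is moreover demonstrably false. Intervals $[w,\hat 1]$ in $\NC_n$ are indeed products of smaller $\NC_j$'s, but their characteristic polynomials are not products of the Beraha factors $f_1,\ldots,f_k$. Concretely,
\begin{equation*}
\chi(\NC_4,z)=z^3-6z^2+10z-5=(z-1)(z^2-5z+5),
\end{equation*}
whose quadratic factor is the minimal polynomial of $4\cos^2(\pi/10)$ (i.e.\ $f_9$), whereas the conjectured diagonal entries for $n=4$ are $s_1=z$, $s_2=z(z-1)$, $s_3=z(z-1)(z-2)$, $s_4=z(z-1)(z-2)(z^2-3z+1)$; the factor $z^2-5z+5$ appears in none of them. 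Equivalently, Lindstr\"om's formula applied to $\bigl(z^{|x\vee_{\NC_n}y|}\bigr)$ gives $\prod_{w}\,z\,\chi([w,\hat1],z)$, which already disagrees with Dahab's $\prod_k f_k(z)^{m_k}$, confirming that your factorization computes the wrong matrix. The Beraha factors come from the Temperley--Lieb/Chebyshev recurrence $p_{k+1}=b_kp_k-p_{k-1}$ and the meander-determinant circle of ideas \cite{DGG,KoSmo,Lickorish}, not from the M\"obius function of $\NC_n$; any serious attack on the conjecture should go through the Jones--Wenzl idempotents, where those $4\cos^2$ values actually arise, and would still have to produce the multiplicity identification with $h_k$ and the integrality over $\ZZ[q,\delta]$ that you leave as expectations.
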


\subsubsection{Third example}
Take the noncrossing 
perfect matchings $m$ on $[2n]$ 
and put $\langle m,m'\rangle=q^{c(m,m')}$ 
where $c(m,m')$ is the number of connected components 
in the graph on $[2n]$ whose multiset of edges is $m\cup m'$.  
This is Lickorish's form \cite{Lickorish} and 
the determinant of the Gram matrix 
$M_n(q)=(\langle m,m'\rangle)_{m,m'}$ 
has been studied \cite{DGG,KoSmo,Lickorish}, see~\cite{K1}. 
But a straightforward calculation shows that 
\begin{equation}
M_n(q)=q^{-1}J_n(q,q)
\end{equation}
(for some compatible ordering of the matchings and the non-crossing partitions). 
Therefore Conjecture~\ref{Catalan:Sub:Question} implies the following 
conjecture for $M_n(q)$. 

\begin{conjecture}
$\GM_n$ has SSNF $\diag(s_1(q)I_{h_1}, s_2(q) I_{h_2},\ldots, s_n(q)I_{h_n})$ over $\ZZ[q]$ 
where $h_k$ is the number of Dyck paths of length $2n$ and height $k$, and  
\begin{equation}
s_k(q)=\prod_{{1\leq i \leq j\leq k}\atop{(i,j+1)=1}} \left(q-2\cos\frac{\pi i}{j+1}\right).
\end{equation}
\end{conjecture}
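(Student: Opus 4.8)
The plan is to obtain the statement as a corollary of Conjecture~\ref{Catalan:Sub:Question}(b) together with the displayed matrix identity $\GM_n=q^{-1}J_n(q,q)$, that is $J_n(q,q)=q\,\GM_n$ over $\ZZ[q]$. Granting that identity, Conjecture~\ref{Catalan:Sub:Question}(b) supplies matrices $P,Q$ of determinant $1$ over $\ZZ[q]$ with $P\,J_n(q,q)\,Q=\diag\big(s_1'(q)I_{h_1},\ldots,s_n'(q)I_{h_n}\big)$, and its formula $s_k'(q)=q\prod_{1\le i\le j\le k,\,(i,j+1)=1}(q-2\cos\tfrac{\pi i}{j+1})$ is exactly $q\,s_k(q)$ for the polynomials $s_k(q)$ in the present statement. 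Since $\ZZ[q]$ is an integral domain and every entry of both sides of $P\,J_n(q,q)\,Q=\diag(\ldots)$ and of $J_n(q,q)=q\,\GM_n$ carries the common factor $q$, I would cancel it to get $P\,\GM_n\,Q=\diag\big(s_1(q)I_{h_1},\ldots,s_n(q)I_{h_n}\big)$ with the same $P,Q$. The divisibility $s_j(q)\mid s_k(q)$ for $j\le k$ is inherited from $s_j'(q)\mid s_k'(q)$, so the right-hand side is in Smith form; as $\det P=\det Q=1$ it is an SSNF, which is the assertion.

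The substantive ingredient at the level of this paper is the identity $\GM_n=q^{-1}J_n(q,q)$ itself. Here I would fix a bijection between the noncrossing perfect matchings $m$ of $[2n]$ and the noncrossing partitions $x\in\NC_n$ making the chosen orderings compatible, and check entrywise that $c(m,m')+1=|x\vee_{\Pi_n}y|+|x'\vee_{\Pi_n}y'|$ for corresponding pairs $m\leftrightarrow x$, $m'\leftrightarrow y$ (recall that $|x|$ denotes the number of blocks of $x$). The connected components of the graph $m\cup m'$ on $[2n]$ are in bijection with the blocks of $x\vee_{\Pi_n}y$; drawing $m$ inside a disk and $m'$ in its complement and completing on the sphere, a planarity count separates these components into an ``inside'' count (blocks of $x\vee_{\Pi_n}y$) and an ``outside'' count (blocks of the join $x'\vee_{\Pi_n}y'$ of the dual partitions), with a global relation producing the extra $+1$. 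This is the ``straightforward calculation'' referred to above, and once the bijection is pinned down I expect it to be routine.

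The real obstacle is Conjecture~\ref{Catalan:Sub:Question}, which lies outside the orthogonal-polynomial machinery of this paper. The natural attack is through the Temperley--Lieb algebra $TL_n$: the Gram matrix $\GM_n$ block-diagonalizes over the cell modules indexed by the number of through-strands, and the successive $f_k$-type factors arise as Gram determinants of the invariant bilinear forms on those modules (this would recover Dahab's determinant evaluation and the known Temperley--Lieb Gram determinants). Turning the determinant into a full Smith normal form over $\ZZ[q]$ demands an \emph{explicit} unimodular change of basis realizing the filtration --- for instance a Jones--Wenzl or ``half-diagram'' basis, or a $q$-deformed M\"obius inversion on $\NC_n$ in the spirit of Proposition~\ref{G:Factorization:Prop}. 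The difficulty is that the pairing is governed by the join $\vee_{\Pi_n}$ and by the duality $x\mapsto x'$ rather than by the intrinsic join of $\NC_n$, so the lattice $\NC_n$ does not directly furnish a triangular factorization $G=ZDZ^t$ as in Proposition~\ref{G:Factorization:Prop}; constructing the right unitriangular $P$ and $Q$ --- equivalently, the correct $q$-analogue of the M\"obius matrix adapted to this pairing --- is the crux of the problem.
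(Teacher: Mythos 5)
Your reduction is exactly the paper's own derivation: the statement is presented there as a \emph{conjecture} that follows from Conjecture~\ref{Catalan:Sub:Question}(b) via the identity $\GM_n(q)=q^{-1}J_n(q,q)$ (which the paper likewise dispatches as a ``straightforward calculation''), and your cancellation of the common factor $q$ and the observation $s_k'(q)=q\,s_k(q)$ carry that implication through correctly. Neither you nor the paper proves Conjecture~\ref{Catalan:Sub:Question} itself, so your proposal establishes the same conditional statement the paper does and correctly identifies the remaining open problem.
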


\section{Remarks}\label{V:Section}
\noindent
There are new and interesting results   
for other types of matrices as well. 
Some recent examples are found in~\cite{St1,St2}; 
they again refine some well-known determinantal evaluations.  
But many determinantal evaluations (e.g.~\cite{K1,K2}) 
have not been considered. 
Here for example is a new result we found 
for the Vandermonde matrix. 

\begin{theorem}\label{V:SNF} 
Let 
\begin{equation}
g_i(x)=\sum_{k=0}^i A_{ik}a^kx^k,\quad A_{ik}\in \ZZ[a,q],\ A_{ii}=1.
\end{equation} 
Then over $\ZZ[a,q]$ the matrix $(g_i([j]_q))_{0\leq i,j\leq n}$ has SSNF 
\begin{equation}
{\rm diag}\big(1, a^1q^{\binom{1}{2}}[1]!_q,a^2q^{\binom{2}{2}}[2]!_q, \ldots, a^nq^{\binom{n}{2}}[n]!_q \big).
\end{equation}
In particular:
\begin{enumerate}[\rm (a)]
\item Over $\ZZ[a,q]$ the Vandermonde matrix 
$\big((1+a[j]_q)^i\big)_{0\leq i,j\leq n}$ has SSNF 
\begin{equation}
{\rm diag}\big(1, a^1q^{\binom{1}{2}}[1]!_q,a^2q^{\binom{2}{2}}[2]!_q, \ldots, a^nq^{\binom{n}{2}}[n]!_q \big).
\end{equation}
\item Over $\ZZ[q]$ the Vandermonde matrix 
$\big([j+1]_q^i\big)_{0\leq i,j\leq n}$ has SSNF 
\begin{equation}
{\rm diag}\big(1, q^{\binom{2}{2}}[1]!_q,q^{\binom{3}{2}}[2]!_q, \ldots, q^{\binom{n+1}{2}}[n]!_q \big).
\end{equation}
\end{enumerate}
\end{theorem}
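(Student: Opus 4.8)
The plan is to reduce Theorem~\ref{V:SNF} to Theorem~\ref{Main:Theorem} by recognizing the Vandermonde-type matrix $(g_i([j]_q))$ as a Hankel matrix of moments after a suitable unitriangular change of basis. First I would note that the hypotheses on $g_i$ say precisely that the matrix $A=(A_{ik}a^k)_{0\le i,k\le n}$ is lower triangular with $A_{ii}=1$, hence $A$ is not quite unitriangular but becomes so after conjugating by $\diag(1,a,a^2,\ldots,a^n)$; in any case $\det A=1$, so $A\in\SL(n+1,\ZZ[a,q])$ and it suffices to prove the statement for the single choice $g_i(x)=x^i$, i.e. for the genuine Vandermonde matrix $V=([j]_q^i)_{0\le i,j\le n}$ — wait, more carefully, for $((a[j]_q)^i)$ or equivalently for the matrix with entries $[j]_q^i$ up to the overall $\diag(a^k)$ scaling on one side. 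Let me instead aim directly: write $(g_i([j]_q)) = A\cdot V$ where $V=\big(([j]_q)^k\big)_{0\le k,j\le n}$, so that it is enough to compute the SNF of $V$, and then absorb $A$ (an $\SL$ matrix) harmlessly.

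The key step is then to identify $V$, or rather $V$ times the diagonal matrix of $a$'s, with the matrix $P^{-1}$ appearing in the proof of Theorem~\ref{Main:Theorem} for an appropriate orthogonal polynomial sequence. The natural candidate is the $q$-Charlier (Stirling) family from \S\ref{Section:Stirling}, or more precisely the polynomials whose moments are $[n]!_q$-type: recall that for the octabasic specialization at the end of \S\ref{Section:Examples} one has $\mu_n=[n]!_q$. But what I actually want is the Laguerre-type family whose $n$-th moment is $a^nq^{\binom n2}[n]!_q$ or similar; the cleanest route is to use the observation that the Hankel matrix $H=(\mu_{i+j})$ factors as $P H P^t = \diag(1,\lambda_1,\lambda_1\lambda_2,\ldots)$ with $P$ lower unitriangular, and simultaneously, for orthogonal polynomials built from a discrete measure supported on the points $[0]_q,[1]_q,[2]_q,\ldots$, one has an explicit "Vandermonde" factorization $H = V D V^t$ where $V=([j]_q^i)$ and $D$ is the diagonal matrix of point-masses. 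Comparing the two factorizations $PHP^t=\diag(s_k)$ and $H=VDV^t$ gives $(PV)D(PV)^t = \diag(s_k)$; if $PV$ is itself triangular (which it is, both being triangular up to the support ordering) then by uniqueness of the triangular factorization $PV$ must be diagonal, and tracking the diagonal entries yields the claim that $V$ has SSNF $\diag(s_0,s_1/d_0\cdots)$ — I would need to chase the exact bookkeeping of which $\lambda$'s and which point masses occur. Concretely I expect $\lambda_k=a^2 q^{2k-2}[k]_q^2$ (the $q$-Laguerre $\lambda$'s from \S\ref{Section:Examples}) giving $s_k=\lambda_1\cdots\lambda_k = a^{2k}q^{2\binom k2}[k]!_q^2$, which is the \emph{square} of the target diagonal entry — so the right statement is that $V$ alone (one copy, not $VV^t$) has SSNF with entries $a^kq^{\binom k2}[k]!_q$, obtained by "taking the square root" of the symmetric factorization, i.e. by observing $V = P^{-1}\cdot\diag(\sqrt{s_k})$ is a genuine identity over $\ZZ[a,q]$ because each $s_k$ is a perfect square there and $P^{-1}$ is unitriangular.

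So the steps, in order, are: (1) reduce to $g_i(x)=x^i$ using that $A\in\SL$; (2) identify the relevant orthogonal polynomial family — the $q$-Laguerre / Simion--Stanton octabasic specialization — whose three-term recurrence has $\lambda_n = a^2q^{2n-2}[n]_q^2$ and whose points of support (zeros of the orthogonality measure, i.e. the abscissae in Favard's discrete representation) are exactly $\{1+a[j]_q\}$ or $\{[j+1]_q\}$; (3) invoke the discrete/Vandermonde factorization $H=V D_{\mathrm{mass}} V^t$ together with $P H P^t = \diag(\lambda_1\cdots\lambda_k)$ and uniqueness of lower-triangular-times-diagonal-times-transpose factorizations to conclude $PV\, D_{\mathrm{mass}}\,(PV)^t=\diag(s_k)$ with $PV$ diagonal, forcing $V = P^{-1}\diag(c_k)$ with $c_k^2 = s_k/(\text{mass factor})$; (4) read off $c_k = a^k q^{\binom k2}[k]!_q$ and conclude that $V$, hence $(g_i([j]_q))=AV$, has the stated SSNF since $P^{-1}$ and $A$ are both in $\SL$ over $\ZZ[a,q]$; (5) specialize $a=1$ and shift $x\mapsto x+1$ (equivalently use $[j+1]_q$) to get parts (a) and (b). The main obstacle is step (3): making precise the discrete orthogonality (Favard's theorem producing a \emph{finite} atomic measure on the $n+1$ points $[0]_q,\ldots,[n]_q$, which requires checking the relevant $\lambda$'s behave correctly and that the truncated moment problem is the one governing $(n+1)\times(n+1)$ Hankel minors) and then extracting the "square root" of the symmetric factorization at the level of the Smith normal form rather than just the determinant — i.e. arguing that the unitriangular $P^{-1}$ together with a diagonal matrix of the $c_k$ realizes $V$ on the nose, so that one really gets the full SNF and not merely $\det V = \prod c_k$.
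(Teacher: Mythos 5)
Your step (1) is fine: $(g_i([j]_q))=(A_{ik})\cdot W$ with $(A_{ik})$ lower unitriangular and $W=\big((a[j]_q)^k\big)_{0\leq k,j\leq n}$, so everything reduces to $W$. But the heart of your argument, step (3), does not work. First, the factorization $H=VD_{\mathrm{mass}}V^t$ with $V$ the Vandermonde matrix at the nodes $[0]_q,\ldots,[n]_q$ presupposes that the moment functional is a sum of $n+1$ point masses at the $q$-integers with weights in $\ZZ[a,q]$. No such representation exists for the families in play: the (truncated) Gaussian quadrature that reproduces $\mu_0,\ldots,\mu_{2n}$ has its nodes at the zeros of $p_{n+1}$, not at the $q$-integers, and its weights are not polynomials; the natural discrete measure for the ($q$-)Charlier functional is an \emph{infinite} atomic measure. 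Second, even granting $(PV)D(PV)^t=\diag(s_k)$, nothing forces $PV$ to be diagonal: $PV$ is a lower unitriangular matrix times a dense Vandermonde matrix, hence dense, so the uniqueness-of-$LDL^t$ argument has no purchase, and a matrix $M$ with $MDM^t$ diagonal need not be diagonal. Indeed your conclusion $V=P^{-1}\diag(c_k)$ would make the Vandermonde matrix lower triangular, which it visibly is not (its top row is all $1$'s). The appearance of squares $s_k=a^{2k}q^{2\binom k2}[k]!_q^2$ followed by an ad hoc ``square root'' is a symptom that the wrong orthogonal family and the wrong (symmetric) factorization have been chosen.

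The correct mechanism, and the paper's, is an \emph{asymmetric} unitriangular factorization. The paper first generalizes Theorem~\ref{Main:Theorem}: for any monic $Y_i,Z_j$ of degrees $i,j$ over $\ZZ[b,\lambda]$, the matrix $\big(\L(Y_iZ_j)\big)$ equals $Y\,\diag(1,\lambda_1,\ldots,\lambda_1\cdots\lambda_n)\,Z^t$ with $Y\neq Z$ both unitriangular (expand $Y_i,Z_j$ in the $p_k$-basis and use \eqref{orthogonal:formula}), hence has the same SSNF as $H$. Then it takes $\L$ to be the $q$-Charlier functional of \S\ref{Section:Stirling}, whose $\lambda_n=aq^{n-1}[n]_q$ gives $\lambda_1\cdots\lambda_k=a^kq^{\binom k2}[k]!_q$ directly (no squares), and chooses $Z_j=\sum_u\qbinom{j}{u}_q p_u$ and $Y_i=\sum_t\sum_k A_{ik}S_q(k,t)a^{k-t}p_t$; the identity $[j]_q^k=\sum_t S_q(k,t)\,[j]_q([j]_q-[1]_q)\cdots([j]_q-[t-1]_q)$ together with orthogonality then shows $\L(Y_iZ_j)=g_i([j]_q)$. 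If you want to salvage your outline, replace the symmetric $VDV^t$ comparison by this two-sided expansion in the Charlier basis; the $q$-binomial matrix $\big(\qbinom{j}{u}_q\big)$ is the unitriangular factor that your ``support ordering'' heuristic was groping for.
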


Theorem~\ref{V:SNF} is a special case of the 
following generalization 
of Theorem~\ref{Main:Theorem}. 

\begin{theorem}\label{Generalized:Main:Theorem} 
Maintain the notation of {{\S\ref{SNF:Hankel:Section}}} 
so that $\L$ is the linear functional 
for the polynomials $p_k(x)$ defined 
by the three-term recurrence relation 
\[p_{n+1}(x)=(x-b_n)p_n(x)-\lambda_np_{n-1}(x),\quad p_{-1}(x)=0, p_0(x)=1.\]
Let $Y_0(x),Y_1(x),\ldots,Y_n(x),Z_0(x),Z_1(x),\ldots,Z_n(x)$ 
be monic polynomials over $\ZZ[b,\lambda]$ 
such that $Y_k(x)$ and $Z_k(x)$ have degree $k$ 
for $0\leq k\leq n$. Then the matrix 
\begin{equation}
\Big(\L(Y_i(x)Z_j(x))\Big)_{0\leq i,j\leq n}
\end{equation}
has SSNF 
\begin{equation}
{\rm diag}(1,\lambda_1,\lambda_1\lambda_2,\ldots, \lambda_1\lambda_2\ldots \lambda_n)
\end{equation}
over $\ZZ[b,\lambda]$.\qed
\end{theorem}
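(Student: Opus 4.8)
The plan is to mimic the proof of Theorem~\ref{Main:Theorem} exactly, exploiting that the only features of $p_i(x)$ used there were (i) $p_i$ is monic of degree $i$ over $\ZZ[b,\lambda]$, and (ii) the orthogonality relation \eqref{orthogonal:formula}. Here we replace $p_i$ and $p_j$ by the more general monic polynomials $Y_i$ and $Z_j$. First I would expand each $Y_i$ and each $Z_j$ in the basis $\{p_0(x),p_1(x),\ldots,p_n(x)\}$: since the $p_k$ form a $\ZZ[b,\lambda]$-basis of the polynomials of degree $\le n$ (each $p_k$ being monic of degree $k$), we may write $Y_i(x)=\sum_{k=0}^i S_{ik}\,p_k(x)$ and $Z_j(x)=\sum_{l=0}^j T_{jl}\,p_l(x)$ with $S_{ik},T_{jl}\in\ZZ[b,\lambda]$, and because $Y_i,Z_j$ are monic of degree $i,j$ we get $S_{ii}=T_{jj}=1$ and $S_{ik}=0$ for $k>i$, $T_{jl}=0$ for $l>j$. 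Thus $S=(S_{ik})_{0\le i,k\le n}$ and $T=(T_{jl})_{0\le j,l\le n}$ are lower unitriangular matrices over $\ZZ[b,\lambda]$.

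Next I would compute the Gram matrix entrywise. Using bilinearity of $\L$ and the orthogonality relation \eqref{orthogonal:formula}, which says $\L(p_k(x)p_l(x))=\lambda_1\lambda_2\cdots\lambda_k\,\delta_{kl}$, we get
\begin{equation*}
\L(Y_i(x)Z_j(x))=\sum_{k,l}S_{ik}T_{jl}\,\L(p_k(x)p_l(x))=\sum_{k=0}^{\min(i,j)}S_{ik}T_{jk}\,s_k,
\end{equation*}
where $s_k=\lambda_1\lambda_2\cdots\lambda_k$ and $s_0=1$. In matrix form this reads
\begin{equation*}
\Big(\L(Y_i(x)Z_j(x))\Big)_{0\le i,j\le n}=S\,\diag(s_0,s_1,\ldots,s_n)\,T^{t}.
\end{equation*}

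Finally I would conclude: $S$ and $T$ are lower unitriangular over $\ZZ[b,\lambda]$, hence $S\in\SL(n+1,\ZZ[b,\lambda])$ and $T^{t}\in\SL(n+1,\ZZ[b,\lambda])$ (they have determinant $1$), so the displayed factorization exhibits $\diag(1,\lambda_1,\lambda_1\lambda_2,\ldots,\lambda_1\lambda_2\cdots\lambda_n)$ as an SSNF of the Gram matrix over $\ZZ[b,\lambda]$, with the divisibility condition \eqref{SNF:d} holding because each $s_k$ divides $s_{k+1}$. I do not anticipate a genuine obstacle — the argument is essentially a change of basis — but the one point requiring a word of care is the claim that the $p_k$'s genuinely form a $\ZZ[b,\lambda]$-basis (not merely a $\mathrm{Frac}(\ZZ[b,\lambda])$-basis) of polynomials of degree $\le n$, so that the coefficients $S_{ik},T_{jl}$ lie in $\ZZ[b,\lambda]$; this follows by downward induction on degree from monicity, since subtracting the leading term of $Y_i$ removes $p_i$ with integer-polynomial coefficient and lowers the degree.
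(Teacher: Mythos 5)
Your proof is correct and is exactly the argument the paper intends (the paper states this theorem with a \qed and no written proof, as the evident generalization of its proof of Theorem~\ref{Main:Theorem}): expand $Y_i$ and $Z_j$ in the monic basis $\{p_k\}$ to get lower unitriangular change-of-basis matrices over $\ZZ[b,\lambda]$, then apply orthogonality to obtain the factorization $S\,\diag(s_0,\ldots,s_n)\,T^{t}$. Your care about the coefficients lying in $\ZZ[b,\lambda]$ rather than its fraction field is exactly the right point to check, and your monicity/downward-induction justification is sound.
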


Theorem~\ref{Main:Theorem} is the special case 
of Theorem~\ref{Generalized:Main:Theorem}
where $Y_k(x)=Z_k(x)=x^k$ for all $k$. 
Theorem~\ref{V:SNF} is the case 
where the polynomials $p_k(x)$ 
are the $q$-Charlier polynomials $C_k^a(x;q)$ 
from \S\ref{Section:Stirling}
and 
\begin{align}
Z_j(x)&=\sum_{u=0}^j \qbinom{j}{u}_q p_u(x),\\
Y_i(x)&=\sum_{t=0}^i\sum_{k=t}^i A_{ik}S_q(k,t)a^{k-t}p_t(x),
\end{align}
where $\qbinom{j}{u}_q=[j]_q[j-1]_q\ldots[j-u+1]_q/[u]!_q$.


\end{document}